\newtheorem{algorithm}[theorem]{Algorithm}
\newcommand{\eps}{\varepsilon}
\newcommand{\supC}{\mbox{$\sup {\rm C}$}}
\newcommand{\supCC}{\mbox{$\sup {\rm cC}$}}
\begin{document}
\title{Coordination Control of Discrete-Event Systems Revisited%
  \thanks{A preliminary version was presented at the 11th International Workshop on Discrete Event Systems (WODES 2012) held in Guadalajara, Mexico~\cite{wodes12}.}
}

\author{Jan~Komenda\,\inst{1} \and Tom{\' a}{\v s}~Masopust\,\inst{1} \and Jan~H.~van~Schuppen\,\inst{2,}\thanks{Most of this work was done when the author was with CWI, Amsterdam, The Netherlands.}}

\institute{
    Institute of Mathematics,
    Academy of Sciences of the Czech Republic\\
    {\v Z}i{\v z}kova 22,
    616~62~Brno, Czech Republic\\
    \email{komenda@ipm.cz, masopust@math.cas.cz}
  \and
    Van Schuppen Control Research\\
    Gouden Leeuw 143,
    1103 KB Amsterdam,
    The Netherlands\\
    \email{jan.h.van.schuppen@euronet.nl}
}

\maketitle
\begin{abstract}
  In this paper, we revise and further investigate the coordination control approach proposed for supervisory control of distributed discrete-event systems with synchronous communication based on the Ramadge-Wonham automata fra\-me\-work. The notions of conditional decomposability, conditional controllability, and conditional closedness ensuring the existence of a solution are carefully revised and simplified. The paper is generalized to non-prefix-closed languages, that is, supremal conditionally controllable sublanguages of not necessary prefix-closed languages are discussed. Non-prefix-closed languages introduce the blocking issue into coordination control, hence a procedure to compute a coordinator for nonblockingness is included. The optimization problem concerning the size of a coordinator is under investigation. We prove that to find the minimal extension of the coordinator event set for which a given specification language is conditionally decomposable is NP-hard. In other words, unless P=NP, it is not possible to find a polynomial algorithm to compute the minimal coordinator with respect to the number of events.
\end{abstract}
\begin{keywordname}
  Discrete-event systems, distributed systems with synchronous communication, supervisory control, coordination control, conditional decomposability.
\end{keywordname}

\section{Introduction}
  In this paper, we revise and further investigate the coordination control approach proposed for supervisory control of distributed discrete-event systems with synchronous communication based on the Ramadge-Wonham automata framework. A distributed discrete-event system with synchronous communication is modeled as a parallel composition of two or more subsystems, each of which has its own observation channel. The local control synthesis consists in synthesizing local nonblocking supervisors for each of the subsystems. It is well-known that such a purely decentralized (often referred to as modular) approach does not work in general. Recently, Komenda and Van Schuppen~\cite{KvS08} have proposed a coordination control architecture as a trade-off between the purely local control synthesis, which is not effective in general because the composition of local supervisors may violate the specification, and the global control synthesis, which is not always possible because of the complexity reasons since the composition of all subsystems can result in an exponential blow-up of states in the monolithic plant. The coordination control approach has been developed for prefix-closed languages in~\cite{automatica2011} and extended to systems with partial observations in~\cite{scl2011}. The case of non-prefix-closed languages has partially been discussed in~\cite{ifacwc2011}. Most of these approaches for prefix-closed languages have already been implemented in the software library libFAUDES~\cite{faudes}.
  
  In the last two decades several alternative approaches have been proposed for supervisory control of large discrete-event systems. Among the different control architectures are such as hierarchical control based on abstraction~\cite{KS,WW96,WZ91}, modular approaches~\cite{FLT,GM04,KvSGM08,pcl06}, decentralized control~\cite{RW92,YLL02} also with inferencing (conditional decisions)~\cite{KT07,YLL04} or with communicating supervisors~\cite{RR00}, and the so-called interface-based approach~\cite{LLW05}. Nowadays, these approaches are combined to achieve even better results, cf.~\cite{SB08,SB11}. Our coordination control approach can be seen as a combination of the horizontal and vertical modularity. The coordinator level corresponds to the abstraction (i.e., the higher level) of hierarchical control, while the local control synthesis is a generalization of the modular control synthesis. Moreover, coordination control is closely related to decentralized control with communication, because local supervisors communicate indirectly via a coordinator, cf.~\cite{Barrett}.
  
  In this paper, the notions of conditional decomposability, conditional controllability, and conditional closedness, which are the central notions to characterize the solvability of the coordination control problem, are carefully revised and simplified. The paper is generalized to non-prefix-closed languages, hence supremal conditionally controllable sublanguages of not necessary prefix-closed languages are discussed. This generality, however, introduces the problem of nonblockingness into the coordination control approach, therefore a part with a procedure to compute a coordinator for nonblockingness is included in the paper. The optimization problem concerning the size of a coordinator is nowadays the main problem under investigation. The construction of a coordinator described in this paper depends mainly on a set of events, including the set of all shared events. We prove that to construct the coordinator so that its event set is minimal with respect to the number of events or, in other words, to find the minimal extension of the coordinator event set for which a given specification language is conditionally decomposable, is NP-hard.
  
  The main contributions and the organization of the paper are as follows. Section~\ref{sec:preliminaries} recalls the basics of supervisory control theory and revises the fundamental concepts. 
  Section~\ref{sec:nphard} gives the computational complexity analysis of the minimal extension problem for conditional decomposability and proves that it is NP-hard to find the minimal extension with respect to set inclusion (Corollary~\ref{NPhard}). 
  Section~\ref{sec:controlsynthesis} formulates the problem of coordination supervisory control. The notion of conditional controllability (Definition~\ref{def:conditionalcontrollability}) is revised and simplified, however still equivalent to the previous definition in, e.g., \cite{automatica2011}.
  Section~\ref{sec:procedure} provides results concerning non-prefix-closed languages. Theorem~\ref{thm2} shows that in a special case the parallel composition of local supervisors results in the supremal conditionally controllable languages. However, the problem how to compute the supremal conditionally controllable sublanguage in general is open.
  Section~\ref{sec:nonblocking} discusses the construction of a coordinator for nonblockingness (Theorem~\ref{thm22}) and presents an algorithm. Section~\ref{sec:revisited} revises the prefix-closed case, where a less restrictive condition, LCC, is used instead of OCC. The possibility to use LCC instead of OCC has already been mentioned in~\cite{automatica2011} without proofs, therefore the proofs are provided here.
  Finally, Section~\ref{sec:conclusion} concludes the paper.

\section{Preliminaries and definitions}\label{sec:preliminaries}
  We assume that the reader is familiar with the basic notions and concepts of supervisory control of discrete-event systems modeled by deterministic finite automata with partial transition functions. For unexplained notions, the reader is referred to the monograph~\cite{CL08}.
  
  Let $\Sigma$ be a finite nonempty set whose elements are called {\em events}, and let $\Sigma^*$ denote the set of all finite words (finite sequences of events) over $\Sigma$; the {\em empty word\/} is denoted by $\eps$. Let $|\Sigma|$ denote the cardinality of $\Sigma$.
  
  A {\em generator\/} is a quintuple $G=(Q,\Sigma, f, q_0, Q_m)$, where $Q$ is a finite nonempty set of {\em states}, $\Sigma$ is a finite set of events (an {\em event set\/}), $f: Q \times \Sigma \to Q$ is a {\em partial transition function}, $q_0 \in Q$ is the {\em initial state}, and $Q_m\subseteq Q$ is a set of {\em marked states}. In the usual way, the transition function $f$ can be extended to the domain $Q \times \Sigma^*$ by induction. The behavior of generator $G$ is described in terms of languages. The language {\em generated\/} by $G$ is the set $L(G) = \{s\in \Sigma^* \mid f(q_0,s)\in Q\}$, and the language {\em marked\/} by $G$ is the set $L_m(G) = \{s\in \Sigma^* \mid f(q_0,s)\in Q_m\}$. Obviously, $L_m(G)\subseteq L(G)$.

  A {\em (regular) language\/} $L$ over an event set $\Sigma$ is a set $L\subseteq \Sigma^*$ such that there exists a generator $G$ with $L_m(G)=L$. The prefix closure of a language $L$ over $\Sigma$ is the set $\overline{L}=\{w\in \Sigma^* \mid \text{there exists } u \in\Sigma^* \text{ such that } wu\in L\}$ of all prefixes of words of the language $L$. A language $L$ is {\em prefix-closed\/} if $L=\overline{L}$.

  A {\em controlled generator\/} over an event set $\Sigma$ is a triple $(G,\Sigma_c,\Gamma)$, where $G$ is a generator over $\Sigma$, $\Sigma_c\subseteq\Sigma$ is a set of {\em controllable events}, $\Sigma_{u} = \Sigma \setminus \Sigma_c$ is the set of {\em uncontrollable events}, and $\Gamma = \{\gamma \subseteq \Sigma \mid \Sigma_{u} \subseteq \gamma\}$ is the {\em set of control patterns}. A {\em supervisor\/} for the controlled generator $(G,\Sigma_c,\Gamma)$ is a map $S:L(G) \to \Gamma$. The {\em closed-loop system\/} associated with the controlled generator $(G,\Sigma_c,\Gamma)$ and the supervisor $S$ is defined as the minimal language $L(S/G)$ such that the empty word $\eps$ belongs to $L(S/G)$, and for any word $s$ in $L(S/G)$ such that $sa$ is in $L(G)$ and $a$ in $S(s)$, the word $sa$ also belongs to $L(S/G)$. We define the marked language of the closed-loop system as the intersection $L_m(S/G) = L(S/G)\cap L_m(G)$. The intuition is that the supervisor disables some of the transitions of the generator $G$, but it can never disable any transition under an uncontrollable event. If the closed-loop system is nonblocking, which means that $\overline{L_m(S/G)}=L(S/G)$, then the supervisor $S$ is called {\em nonblocking}.

  Given a specification language $K$ and a plant (generator) $G$, the control objective of supervisory control is to find a nonblocking supervisor $S$ such that $L_m(S/G)=K$. For the monolithic case, such a supervisor exists if and only if the specification $K$ is both {\em controllable\/} with respect to the plant language $L(G)$ and uncontrollable event set $\Sigma_u$, that is the inclusion $\overline{K}\Sigma_u\cap L\subseteq \overline{K}$ is satisfied, and {\em $L_m(G)$-closed}, that is the equality $K = \overline{K}\cap L_m(G)$ is satisfied. For uncontrollable specifications, controllable sublanguages of the specification are considered instead. The notation $\supC(K,L(G),\Sigma_u)$ denotes the supremal controllable sublanguage of the specification $K$ with respect to the plant language $L(G)$ and uncontrollable event set $\Sigma_u$, which always exists and is equal to the union of all controllable sublanguages of the specification $K$, see~\cite{Won04}.

  A {\em (natural) projection\/} $P: \Sigma^* \to \Sigma_0^*$, where $\Sigma_0$ is a subset of $\Sigma$, is a homomorphism defined so that $P(a)=\eps$ for $a$ in $\Sigma\setminus \Sigma_0$, and $P(a)=a$ for $a$ in $\Sigma_0$. The projection of a word is thus uniquely determined by projections of its letters. The {\em inverse image\/} of $P$ is denoted by $P^{-1}:\Sigma_0^* \to 2^{\Sigma^*}$. For three event sets $\Sigma_i$, $\Sigma_j$, $\Sigma_\ell$, subsets of $\Sigma$, we use the notation $P^{i+j}_{\ell}$ to denote the projection from $(\Sigma_i\cup \Sigma_j)^*$ to $\Sigma_\ell^*$. If $\Sigma_i\cup \Sigma_j=\Sigma$, we simplify the notation to $P_\ell$. Similarly, the notation $P_{i+k}$ stands for the projection from $\Sigma^*$ to $(\Sigma_i\cup\Sigma_k)^*$. The projection of a generator $G$, denoted by $P(G)$, is a generator whose behavior satisfies $L(P(G))=P(L(G))$ and $L_m(P(G))=P(L_m(G))$.
  
  The synchronous product of languages $L_1$ over $\Sigma_1$ and $L_2$ over $\Sigma_2$ is defined as the language $L_1\parallel L_2=P_1^{-1}(L_1) \cap P_2^{-1}(L_2)$, where $P_i: (\Sigma_1\cup \Sigma_2)^*\to \Sigma_i^*$ is a projection, for $i=1,2$. A similar definition for generators can be found in~\cite{CL08}. The relation between the language definition and the generator definition is specified by the following equations. For generators $G_1$ and $G_2$, $L(G_1 \| G_2) = L(G_1) \parallel L(G_2)$ and $L_m(G_1 \| G_2)= L_m(G_1) \parallel L_m(G_2)$. In the automata framework, where a supervisor $S$ has a finite representation as a generator, the closed-loop system is a synchronous product of the supervisor and the plant. Thus, we can write the closed-loop system as $L(S/G)=L(S) \parallel L(G)$.

  For a generator $G$ over an event set $\Sigma$, let $\Sigma_r(G)=\{a\in \Sigma \mid$ there are words $u,v\in \Sigma^* \text{ such that } uav\in L(G)\}$ denote the set of all events appearing in words of the language $L(G)$. Generators $G_1$ and $G_2$ are {\em conditionally independent\/} with respect to a generator $G_k$ if all events shared by the subsystems appear in the generator $G_k$, that is, if the inclusion $\Sigma_r(G_1) \cap \Sigma_r(G_2) \subseteq \Sigma_r(G_k)$ is satisfied. In other words, there is no simultaneous move in both generators $G_1$ and $G_2$ without the generator $G_k$ being also involved. 

  Now, the notion of conditional decomposability is simplified compared to our previous work~\cite{automatica2011}, but still equivalent. 
  \begin{definition}
    A language $K$ is {\em conditionally decomposable\/} with respect to event sets $\Sigma_1$, $\Sigma_2$, $\Sigma_k$, where $\Sigma_1\cap\Sigma_2\subseteq\Sigma_k\subseteq \Sigma_1\cup\Sigma_2$, if 
    \[
      K = P_{1+k}(K)\parallel P_{2+k}(K)\,,
    \]
    where $P_{i+k}:(\Sigma_1\cup\Sigma_2)^*\to (\Sigma_i\cup\Sigma_k)^*$ is a projection, for $i=1,2$.
  \end{definition}

  Note that there always exists an extension of $\Sigma_k$ which satisfies this condition; $\Sigma_k = \Sigma_1 \cup \Sigma_2$ is a trivial example. Here the index $k$ is related to projection $P_k$ used later in the paper. There exists a polynomial algorithm to check this condition, and to extend the event set to satisfy the condition, see~\cite{SCL12}. However, the question which extension is the most appropriate requires further investigation. In Section~\ref{sec:nphard}, we show that to find the minimal extension is NP-hard.

  Languages $K$ and $L$ are {\em synchronously nonconflicting\/} if $\overline{K \parallel L} = \overline{K} \parallel \overline{L}$. 
  \begin{lemma}
    Let $K$ be a language. If the language $\overline{K}$ is conditionally decomposable, then the languages $P_{1+k}(K)$ and $P_{2+k}(K)$ are synchronously nonconflicting.
  \end{lemma}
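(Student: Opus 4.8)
The plan is to prove the two inclusions of the desired equality $\overline{P_{1+k}(K)\parallel P_{2+k}(K)} = \overline{P_{1+k}(K)}\parallel \overline{P_{2+k}(K)}$ separately, relying on two standard facts about natural projections that I would either cite or verify in a line each. The first is that a natural projection commutes with prefix closure, i.e. $\overline{P_{i+k}(K)} = P_{i+k}(\overline{K})$ for $i=1,2$; this holds because every prefix of a projected word is the projection of a suitable prefix of the original word, and conversely. The second is the generic inclusion $\overline{L_1\parallel L_2}\subseteq \overline{L_1}\parallel \overline{L_2}$, valid for arbitrary languages, which follows since projecting a prefix of a word of $L_1\parallel L_2$ yields a prefix of a word of $L_i$.

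For the inclusion from left to right I would simply instantiate the generic inclusion with $L_1 = P_{1+k}(K)$ and $L_2 = P_{2+k}(K)$; no hypothesis on $K$ is needed here. The reverse inclusion is where conditional decomposability enters. I would rewrite the right-hand side using the commutation fact as $\overline{P_{1+k}(K)}\parallel \overline{P_{2+k}(K)} = P_{1+k}(\overline{K})\parallel P_{2+k}(\overline{K})$, and then invoke the hypothesis that $\overline{K}$ is conditionally decomposable to conclude that this expression equals $\overline{K}$.

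It then remains to show $\overline{K}\subseteq \overline{P_{1+k}(K)\parallel P_{2+k}(K)}$. Since $\Sigma_k\subseteq\Sigma_1\cup\Sigma_2$, the synchronous product $P_{1+k}(K)\parallel P_{2+k}(K)$ is by definition $P_{1+k}^{-1}(P_{1+k}(K))\cap P_{2+k}^{-1}(P_{2+k}(K))$, so the trivial membership $K\subseteq P_{1+k}^{-1}(P_{1+k}(K))\cap P_{2+k}^{-1}(P_{2+k}(K)) = P_{1+k}(K)\parallel P_{2+k}(K)$ gives, by monotonicity of prefix closure, $\overline{K}\subseteq\overline{P_{1+k}(K)\parallel P_{2+k}(K)}$. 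Chaining the equalities and inclusions of the last two paragraphs yields the claimed equality.

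I do not expect a serious obstacle: the only genuine ingredient is the commutation of projection and prefix closure, and the conditional decomposability hypothesis is used exactly once, to collapse $P_{1+k}(\overline{K})\parallel P_{2+k}(\overline{K})$ back to $\overline{K}$. The single point requiring a little care is to confirm that the projections defining the synchronous product of $P_{1+k}(K)$ and $P_{2+k}(K)$ are the maps $P_{1+k}$ and $P_{2+k}$ themselves, which relies precisely on $\Sigma_k\subseteq\Sigma_1\cup\Sigma_2$ so that $(\Sigma_1\cup\Sigma_k)\cup(\Sigma_2\cup\Sigma_k)=\Sigma_1\cup\Sigma_2$.
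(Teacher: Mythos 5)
Your proof is correct and is essentially the paper's own argument: the paper establishes the same sandwich $\overline{K}\subseteq\overline{P_{1+k}(K)\parallel P_{2+k}(K)}\subseteq\overline{P_{1+k}(K)}\parallel\overline{P_{2+k}(K)}=\overline{K}$ in a single chain, using exactly your ingredients (the trivial inclusion $K\subseteq P_{1+k}(K)\parallel P_{2+k}(K)$, monotonicity of prefix closure, the generic inclusion for synchronous products, and commutation of projection with prefix closure combined with conditional decomposability of $\overline{K}$). You merely split the chain into two explicit inclusions and spell out the commutation step that the paper leaves implicit.
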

  \begin{proof}
    Assume that the language $\overline{K}$ is conditionally decomposable. From a simple observation that $K \subseteq P_{i+k}^{-1}(P_{i+k}(K))$, for $i=1,2$, we immediately obtain that $K \subseteq P_{1+k}(K) \parallel P_{2+k}(K)$. As the prefix-closure is a monotone operation, 
    \[
      \overline{K} \subseteq \overline{P_{1+k}(K)\parallel P_{2+k}(K)} \subseteq \overline{P_{1+k}(K)} \parallel \overline{P_{2+k}(K)} = \overline{K}\,,
    \]
    which proves the lemma.
  \qed\end{proof}
  
  The following example shows that there exists, in general, no relation between the conditional decomposability of languages $K$ and $\overline{K}$.
  \begin{example}\label{ex1}
    Let $\Sigma_{1}=\{a_1,b_1,a,b\}$, $\Sigma_{2}=\{a_2,b_2,a,b\}$, and $\Sigma_k=\{a,b\}$ be event sets, and define the language $K=\{a_1a_2a,a_2a_1a,b_1b_2b,b_2b_1b\}$. Then 
      $P_{1+k}(K) = \{a_1a,b_1b\}$, 
      $P_{2+k}(K) = \{a_2a,b_2b\}$, and 
      $K = P_{1+k}(K)\parallel P_{2+k}(K)$. 
    Notice that whereas $a_1b_2$ is in $\overline{P_{1+k}(K)} \parallel \overline{P_{2+k}(K)}$, $a_1b_2$ is not in $\overline{K}$, which means that the language $\overline{K}$ is not conditionally decomposable.
    
    On the other hand, consider the language $L=\{\eps,ab,ba,abc,bac\}$ over the event set $\{a,b,c\}$ with $\Sigma_{1}=\{a,c\}$, $\Sigma_{2}=\{b,c\}$, $\Sigma_k=\{c\}$. Then 
    $
      \overline{L} = \overline{P_{1+k}(L)} \parallel \overline{P_{2+k}(L)} = P_{1+k}(L) \parallel P_{2+k}(L)\,,
    $
    and it is obvious that $L\neq \overline{L}$. 
    \hfill$\triangleleft$
  \end{example}

\section{Conditional decomposability minimal extension problem}\label{sec:nphard}
  We have defined conditional decomposability only for two event sets, but the definition can be extended to more event sets as follows. A language $K$ is {\em conditionally decomposable\/} with respect to event sets $(\Sigma_i)_{i=1}^{n}$, for some $n\ge 2$, and an event set $\Sigma_k$, where $\Sigma_k\subseteq \cup_{i=1}^{n} \Sigma_i$ contains all shared events, that is, it satisfies 
  \[
    \Sigma_s:=\bigcup_{i\neq j} (\Sigma_i\cap \Sigma_j)\subseteq\Sigma_k\,,
  \]
  if 
  \[
    K = \bigparallel _{i=1}^{n}P_{i+k}(K)\,.
  \]
  
  The conditional decomposability minimal extension problem is to find a minimal extension (with respect to set inclusion) of the event set $\Sigma_s$ of all shared events so that the language is conditionally decomposable with respect to given event sets and the extension of $\Sigma_s$. The optimization problem can be reformulated to a decision version as follows.
  \begin{problem}[CD MIN EXTENSION]\label{cd_dv}$ $\\
    INSTANCE: A language $K$ over an event set $\Sigma=\cup_{i=1}^{n} \Sigma_i$, where $n\ge 2$, and a positive integer $r\le |\Sigma|$.\\
    QUESTION: Is the language $K$ conditionally decomposable with respect to event sets $(\Sigma_i)_{i=1}^{n}$ and $\Sigma_s\cup \Sigma_r$, where $|\Sigma_r|\le r$?
  \end{problem}

  We now prove that the CD MIN EXTENSION problem is NP-complete. This then immediately implies that the optimization problem of finding the minimal extension of the event set $\Sigma_s$ is NP-hard. On the other hand, it is not hard to see that the optimization problem is in PSPACE. Indeed, we can check all subsets generated one by one using the polynomial algorithm described in~\cite{SCL12}.
  
  To prove NP-completeness, we reduce the MINIMUM SET COVER problem to the CD MIN EXTENSION problem; the MINIMUM SET COVER problem is NP-complete~\cite{GareyJ79}. 
  \begin{problem}[MINIMUM SET COVER]\label{msc}$ $\\
    INSTANCE: A collection $C$ of subsets of a finite set $S$, and a positive integer $t\le |C|$.\\
    QUESTION: Does the collection $C$ contain a cover for the set $S$ of cardinality $t$ or less, that is, a subset $C'$ with $|C'|\le t$ such that every element of the set $S$ belongs to at least one member of $C'$?
  \end{problem}

  \begin{theorem}
    The CD MIN EXTENSION problem is NP-complete.
  \end{theorem}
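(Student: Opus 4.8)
The plan is to establish the two halves of NP-completeness separately. Membership in NP is the easy direction: a certificate is simply an event set $\Sigma_r$ with $|\Sigma_r|\le r$, which has size polynomial in the input, and given such a set one decides in polynomial time whether $K$ is conditionally decomposable with respect to $(\Sigma_i)_{i=1}^{n}$ and $\Sigma_s\cup\Sigma_r$ using the polynomial algorithm of~\cite{SCL12}. It therefore remains to prove NP-hardness by a polynomial-time reduction from MINIMUM SET COVER.

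For the reduction I would transform an instance $(S,C,t)$, with $S=\{s_1,\dots,s_m\}$ and $C=\{C_1,\dots,C_p\}$, into a CD MIN EXTENSION instance over two subsystems ($n=2$, which also shows hardness already in the smallest nontrivial case). The guiding correspondence is: each set $C_j$ contributes one private event $e_j$, and these are the only events relevant to the extension $\Sigma_r$; each element $s_i$ contributes a small gadget, i.e. a finite set of words added to $K$, that encodes a single decomposability conflict. The mechanism exploited throughout is that moving a private event $e_j$ into the coordinator alphabet $\Sigma_k=\Sigma_s\cup\Sigma_r$ turns $e_j$ into a synchronizing event: it becomes visible to both projections $P_{1+k}$ and $P_{2+k}$ and can thereby destroy a recombined word that the product $P_{1+k}(K)\parallel P_{2+k}(K)$ would otherwise contain. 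Each gadget is built so that its conflict vanishes exactly when at least one $e_j$ with $s_i\in C_j$ is promoted, while all scaffolding events are made shared from the start (placed in $\Sigma_1\cap\Sigma_2\subseteq\Sigma_s$), so that promoting them is useless. Finally one sets $r=t$.

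Given such gadgets, both directions are routine. If $C'$ is a cover with $|C'|\le t$, then promoting $\{e_j:C_j\in C'\}$ resolves every element's conflict, so $K$ becomes conditionally decomposable with $|\Sigma_r|\le t$. Conversely, if some $\Sigma_r$ with $|\Sigma_r|\le t=r$ makes $K$ conditionally decomposable, then, using that the construction renders every event other than the $e_j$ irrelevant to decomposability, we may assume $\Sigma_r\subseteq\{e_1,\dots,e_p\}$, and the sets $\{C_j:e_j\in\Sigma_r\}$ must cover $S$: an uncovered element would leave its gadget's conflict intact, contradicting decomposability. Since the construction has size polynomial in $|S|+|C|$, this is a polynomial-time reduction, and NP-completeness of CD MIN EXTENSION follows, whence the optimization problem is NP-hard.

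The main obstacle is the gadget design, which must make this biconditional tight in both directions at once. The hard part is to realize a genuine logical disjunction: promoting a single qualifying event $e_j$ must eliminate all recombined words of the gadget for $s_i$, not merely some of them, while no promotion may introduce new conflicts in other gadgets. A naive linear encoding $a_i\,e_{j_1}\cdots e_{j_l}\,c_i$ fails precisely here, since promoting one $e_{j}$ fixes the position of $c_i$ only relative to that event and leaves residual crossovers in which $c_i$ slides past the still-private events; the correct gadget must confine each element to a conflict every bad word of which is killed by any qualifying promotion. Dually, one must guarantee that only the $e_j$ are useful, so that no non-$e_j$ event and no cross-gadget interaction can resolve all conflicts with fewer than the minimal number of sets, ensuring that the optimum of CD MIN EXTENSION coincides exactly with that of MINIMUM SET COVER.
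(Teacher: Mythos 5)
Your overall strategy coincides with the paper's: membership in NP by guessing $\Sigma_r$ and verifying with the polynomial algorithm of~\cite{SCL12}, and NP-hardness by a reduction from MINIMUM SET COVER with $n=2$, one private event per set of the collection and one ``conflict gadget'' per element whose conflict is resolved exactly when a covering set's event is promoted into $\Sigma_k$. However, the proposal stops short of the one thing that actually constitutes the hardness proof: the gadget itself. You explicitly name the gadget design as ``the main obstacle'' and leave it unresolved, so what you have is a plan, not a proof. This is a genuine gap, not a presentational one, because the correctness of the reduction lives entirely in the combinatorics of that gadget.

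For the record, the paper's gadget is simpler than you anticipate and is essentially the ``linear encoding'' you dismiss, made to work by a two-word trick with a shared anchor. For each element $b_i$ with $C_{b_i}=\{c_{i_1},\dots,c_{i_{b_i}}\}$ one adds the \emph{pair} of words $a_i\,a\,b_i$ and $a_i\,c_{i_1}\cdots c_{i_{b_i}}\,a$, with $\Sigma_1=S\cup\{a\}\cup\{a_i\}$, $\Sigma_2=C\cup\{a\}\cup\{a_i\}$, so $\Sigma_s=\{a\}\cup\{a_i\}$; the shared event $a$ sits \emph{before} the private $\Sigma_1$-event $b_i$ in one word and \emph{after} the private $\Sigma_2$-events in the other. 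If no event of $\{b_i,c_{i_1},\dots,c_{i_{b_i}}\}$ is promoted, both words project to $a_ia$ on the opposite side and the spurious word $a_i c_{i_1}\cdots c_{i_{b_i}} a\, b_i$ appears in $P_{1+k}(K)\parallel P_{2+k}(K)$; conversely an explicit computation of the product (using $X=C\setminus C'$ and $X\cap S=\emptyset$) shows that promoting any one covering $c_{i_j}$ kills every residual crossover, realizing exactly the disjunction you were worried about. One further point: your claim that one ``may assume $\Sigma_r\subseteq\{e_1,\dots,e_p\}$'' because other events are irrelevant is both unproven and, for this gadget, false --- promoting the element-event $b_i$ also resolves its conflict. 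The paper does not need that assumption; it instead converts an arbitrary extension into a cover of the same cardinality by mapping each promoted $b\in\Sigma_r$ to an arbitrary member of $C_b$. You would need to do the same.
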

  \begin{proof}
    First, we show that CD MIN EXTENSION is in NP. To do this, a Turing machine guesses a set $\Sigma_r$ of cardinality at most $r$ and uses Algorithm~1 of~\cite{SCL12} to verify in polynomial time whether the given language is conditionally decomposable with respect to the given event sets.
    
    To prove the NP-hardness, consider an instance $(S,C)$ of the MINIMUM SET COVER problem as defined in Problem~\ref{msc} such that the union of all elements of the collection $C$ covers the set $S$ (otherwise it is trivial to solve the problem). Denote 
    \begin{eqnarray*}
      S=\{b_1,b_2,\ldots,b_n\} & \text{ and } & C=\{c_1,c_2,\ldots,c_m\}\,.
    \end{eqnarray*} 
    We now construct a language $K$ over the event set $S\cup \{a_i \mid i=1,2,\ldots,n\}\cup C\cup \{a\}$ as follows. For each $b_i$ in $S$, let $C_{b_i} = \{c_j \mid b_i\in c_{j}\}$ be the set of all elements of the collection $C$ containing the element $b_i$. Then, for $C_{b_i}=\{c_{i_1}, c_{i_2},\ldots, c_{i_{b_i}}\}$, where we assume without loss of generality that $i_1< i_2< \ldots < i_{b_i}$, add the two words $a_iab_i$ and $a_i c_{i_1} c_{i_2} \ldots c_{i_{b_i}} a$ to the language $K$. Then the language $K$ is
    \[
      K=\sum_{i=1}^{n} (a_i a b_i + a_i c_{i_1} c_{i_2} \ldots c_{i_{b_i}} a )\,.
    \]
    To demonstrate the construction, let $S=\{b_1,b_2,b_3,b_4,b_5\}$ and $C=\{c_1=\{b_1,b_2,b_3\},$ $c_2=\{b_2,b_4\},c_3=\{b_3,b_4\},c_4=\{b_4,b_5\}\}$. The generator for language $K$ is depicted in Fig.~\ref{fig04}. 
    \begin{figure}[htb]
      \centering
      \begin{tikzpicture}[->,>=stealth,shorten >=1pt,auto,node distance=2.3cm,
        state/.style={circle,minimum size=6mm,very thin,draw=black,initial text=}]

        \node[state,initial]   (0)    {$q_0$};
        \node[state] (3) [right of=0] {$q_3$};
        \node[state] (2) [above of=3] {$q_2$};
        \node[state] (1) [above of=2] {$q_1$};
        \node[state] (4) [below of=3] {$q_4$};
        \node[state] (5) [below of=4] {$q_5$};

        \node[state] (11) [above right of=1] {$p_1$};
        \node[state,accepting] (12) [right of=11] {$f_1$};
        \node[state] (13) [right of=1] {$s_1$};
        
        \node[state] (23) [right of=2] {$s_2$};
        \node[state] (21) [above right of=23] {$p_2$};
        \node[state] (24) [right of=23] {$s_3$};
        \node[state,accepting] (22) [above right of=24] {$f_2$};
        
        \node[state] (33) [right of=3] {$s_4$};
        \node[state] (31) [above right of=33] {$p_3$};
        \node[state] (34) [right of=33] {$s_5$};
        \node[state,accepting] (32) [above right of=34] {$f_3$};
        
        \node[state] (43) [right of=4] {$s_6$};
        \node[state] (44) [right of=43] {$s_7$};
        \node[state] (41) [above right of=44] {$p_4$};
        \node[state] (45) [right of=44] {$s_8$};
        \node[state,accepting] (42) [above right of=45] {$f_4$};
        
        \node[state] (51) [above right of=5] {$p_5$};
        \node[state] (55) [right of=5] {$s_9$};
        \node[state,accepting] (52) [above right of=55] {$f_5$};
        
        \path
          (0) edge[bend left] node {$a_1$} (1)
          (0) edge[bend left=20] node {$a_2$} (2)
          (0) edge node {$a_3$} (3)
          (0) edge[bend right=20] node {$a_4$} (4)
          (0) edge[bend right] node {$a_5$} (5)
          
          (1) edge node {$a$} (11)
          (11) edge node {$b_1$} (12)
          (1) edge node {$c_1$} (13)
          (13) edge node {$a$} (12)
          
          (2) edge[bend left=20] node {$a$} (21)
          (21) edge node {$b_2$} (22)
          (2) edge node {$c_1$} (23)
          (23) edge node {$c_2$} (24)
          (24) edge node {$a$} (22)
          
          (3) edge[bend left=20] node {$a$} (31)
          (31) edge node {$b_3$} (32)
          (3) edge node {$c_1$} (33)
          (33) edge node {$c_3$} (34)
          (34) edge node {$a$} (32)
          
          (4) edge[bend left=15] node {$a$} (41)
          (41) edge node {$b_4$} (42)
          (4) edge node {$c_2$} (43)
          (43) edge node {$c_3$} (44)
          (44) edge node {$c_4$} (45)
          (45) edge node {$a$} (42)
          
          (5) edge node {$a$} (51)
          (51) edge node {$b_5$} (52)
          (5) edge node {$c_4$} (55)
          (55) edge node {$a$} (52)
          ;
      \end{tikzpicture}
      \caption{The generator for language $K$ corresponding to the MINIMUM SET COVER instance $(S,C)$, where $S=\{b_1,b_2,b_3,b_4,b_5\}$ and $C=\{c_1=\{b_1,b_2,b_3\},$ $c_2=\{b_2,b_4\},c_3=\{b_3,b_4\},c_4=\{b_4,b_5\}\}$.}
      \label{fig04}
    \end{figure}
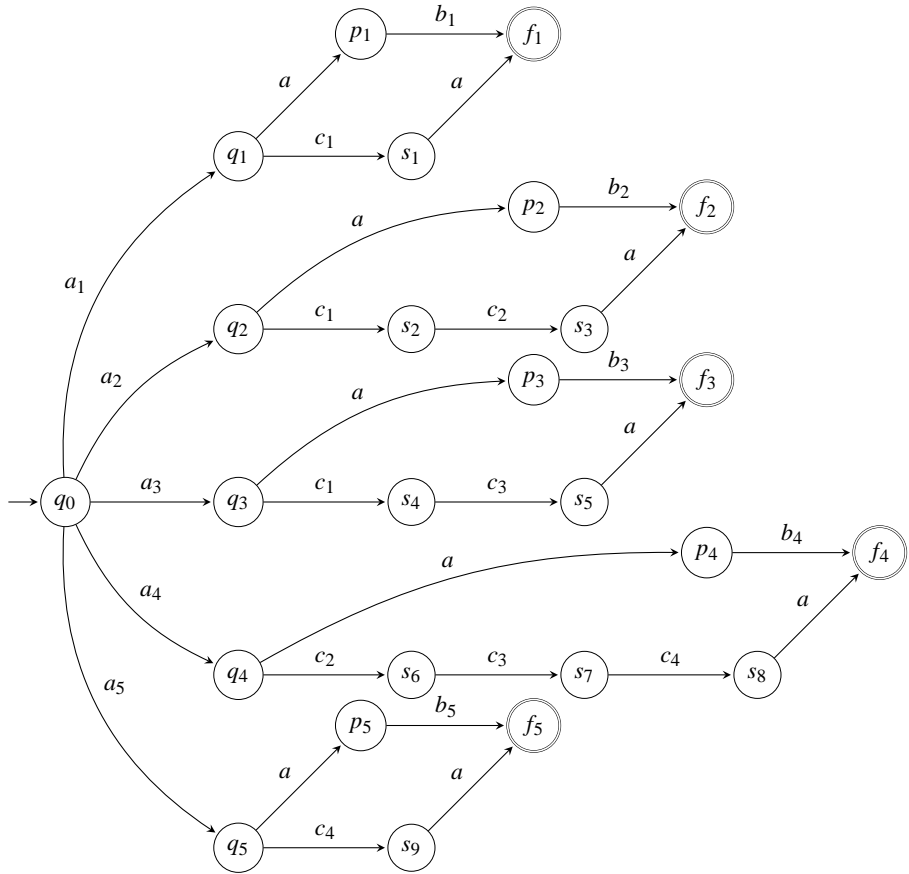
    Note that $\{c_1,c_4\}$ is the minimum set cover. Next, we define two event sets
    \begin{eqnarray*}
      & \Sigma_1 = S\cup \{a\}\cup\{a_i \mid i=1,2,\ldots,n\} \\ 
      & \text{ and } \\
      & 
      \Sigma_2 = C\cup \{a\}\cup\{a_i \mid i=1,2,\ldots,n\}\,.
    \end{eqnarray*}
    As the intersection $S\cap C$ is empty, it gives that the event set $\Sigma_s=\{a\}\cup\{a_i \mid i=1,2,\ldots,n\}$. We now prove that there exists a minimum set cover of cardinality at most $r$ if and only if there exists an extension of the event set $\Sigma_s$ of cardinality at most $r$ making the language $K$ conditionally decomposable. 
    
    Assume that there exists a minimum set cover $C'=\{c_{i_1},c_{i_2},\ldots,c_{i_r}\}\subseteq C$ of cardinality $r$. We prove that the language $K$ is conditionally decomposable with respect to $\Sigma_1$, $\Sigma_2$, and $\Sigma_k=\Sigma_s\cup\{c_{i_1},c_{i_2},\ldots,c_{i_r}\}$. The application of projection $P_{1+k}$ to language $K$ results in the language 
    \[
      P_{1+k}(K) = \sum_{i=1}^{n} (a_i a b_i + a_i P_{1+k}(c_{i_1} c_{i_2} \ldots c_{i_{b_i}}) a )\,,
    \] 
    and the application of projection $P_{2+k}$ to language $K$ results in the language 
    \[
      P_{2+k}(K) = \sum_{i=1}^{n} (a_i a + a_i c_{i_1} c_{i_2} \ldots c_{i_{b_i}} a)\,.
    \] 
    Note that the word $P_{1+k}(c_{i_1} c_{i_2} \ldots c_{i_{b_i}}) \in C'^*$ is nonempty because at least one set of the collection $C'$ covers the element $b_i$, for all $i=1,2,\ldots,n$. Let 
    \[
      X=C\setminus C'
    \]
    denote the complement of the collection $C'$, then the intersection $X\cap S$ is empty. As $C_{b_i}\cap C\ne\emptyset$, for each element $b_i$ of the set $S$, the language $P_{1+k}^{-1}P_{1+k}(c_{i_1} c_{i_2} \ldots c_{i_{b_i}})$ is not a subset of the language $X^*$. It can be seen that the intersection $S^* c_{i_1} S^* c_{i_2} S^* \ldots S^* c_{i_{b_i}} S^*\cap X^*=\emptyset$ is empty, that the intersection $P_{1+k}^{-1}P_{1+k}(c_{i_1} c_{i_2} \ldots c_{i_{b_i}})\cap S^*=\emptyset$ is empty, and that the intersection $P_{1+k}^{-1}P_{1+k}(c_{i_1} c_{i_2} \ldots c_{i_{b_i}})\cap S^* c_{i_1} S^* c_{i_2} S^* \ldots S^* c_{i_{b_i}} S^*=\{c_{i_1} c_{i_2} \ldots c_{i_{b_i}}\}$. Then the parallel composition of both projections of the language $K$,
    \begin{align*}
      & P_{1+k}(K)\parallel P_{2+k}(K)\\
      & = \sum_{i=1}^{n} (X^*a_i X^* a X^* b_i X^* + X^* a_i P_{1+k}^{-1}(P_{1+k}(c_{i_1} c_{i_2} \ldots c_{i_{b_i}})) a X^*) \nonumber\\
      & \cap \sum_{i=1}^{n} (S^* a_i S^* a S^* + S^* a_i S^* c_{i_1} S^* c_{i_2} S^* \ldots S^* c_{i_{b_i}} S^* a S^*)\\
      &= \sum_{i=1}^{n} (a_i a b_i + a_i c_{i_1} c_{i_2} \ldots c_{i_{b_i}} a )= K\nonumber\,,
    \end{align*}
    is equal to $K$.
    
    On the other hand, let $\Sigma_r\subseteq S\cup C$ be an extension of the event set $\Sigma_s$ of cardinality $r$ such that the language $K$ is conditionally decomposable with respect to event sets $\Sigma_1$, $\Sigma_2$, and $\Sigma_k=\Sigma_s\cup \Sigma_r$. Consider a symbol $b_i$ and two corresponding words $a_iab_i$ and $a_i c_{i_1} c_{i_2} \ldots c_{i_{b_i}} a$ from the language $K$. If $\Sigma_r\cap \{b_i, c_{i_1}, c_{i_2}, \ldots, c_{i_{b_i}}\}=\emptyset$, then the projections of these words to event sets $\Sigma_2\cup \Sigma_k$ and $\Sigma_1\cup \Sigma_k$ are, respectively, $P_{2+k}(a_iab_i)=a_ia$ and $P_{1+k}(a_i c_{i_1} c_{i_2} \ldots c_{i_{b_i}} a)=a_ia$. But then the word $a_i c_{i_1} c_{i_2} \ldots c_{i_{b_i}} a b_i\notin K$ belongs to $P_{1+k}(K)\parallel P_{2+k}(K)$, which is a contradiction. Hence, at least one of the symbols $b_i$, $c_{i_1}$, $c_{i_2}$, \ldots, $c_{i_{b_i}}$ must belong to the set $\Sigma_r$. In other words, at least one of these symbols covers the symbol $b_i$. We can now construct a covering $C'\subseteq C$ of cardinality at most $r$ as follows. For each $c$ in $\Sigma_r$, add the set $c$ to the covering $C'$, and for each $b$ in $\Sigma_r$, add any set $c$ from the set $C_b$ to the covering $C'$. It is then easy to see that the collection $C'$ covers the set $S$.
  \qed\end{proof}

  Note that an immediate consequence of the construction is that the minimal extension problem is NP-hard even for finite languages and two event sets.
  \begin{corollary}\label{NPhard}
    The minimal extension problem is NP-hard.
  \end{corollary}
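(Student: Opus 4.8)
The plan is to derive the corollary directly from the preceding theorem, since NP-hardness of an optimization problem follows routinely from NP-completeness of its decision version. First I would recall that the CD MIN EXTENSION problem (Problem~\ref{cd_dv}) has just been shown to be NP-complete, hence in particular NP-hard. The optimization problem under consideration asks, given $K$ and the event sets $(\Sigma_i)_{i=1}^{n}$, for an extension $\Sigma_r$ of minimal cardinality making $K$ conditionally decomposable with respect to $(\Sigma_i)_{i=1}^{n}$ and $\Sigma_s\cup\Sigma_r$.

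The key step is a standard self-reduction showing that the optimization problem is at least as hard as its decision version. A hypothetical polynomial-time algorithm solving the optimization problem would immediately solve CD MIN EXTENSION: given an instance $(K,r)$, one runs the optimization algorithm to obtain a minimal extension, reads off its cardinality $r^{*}$, and answers ``yes'' precisely when $r^{*}\le r$. Since the decision problem is NP-complete, no such polynomial-time procedure can exist unless $\mathrm{P}=\mathrm{NP}$, and therefore the minimal extension problem is NP-hard.

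Finally, to justify the sharper claim recorded in the remark preceding the corollary---that hardness already holds for finite languages and two event sets---I would simply inspect the reduction constructed in the proof of the theorem. The language $K$ built there is a finite sum of finite words, hence a finite language, and the construction employs only the two event sets $\Sigma_1$ and $\Sigma_2$. Consequently the very same reduction witnesses NP-hardness of the minimal extension problem restricted to this special case. I do not expect any genuine obstacle here; the only point requiring care is phrasing the decision-to-optimization reduction so that, once an optimal $\Sigma_r$ has been produced, the comparison $r^{*}\le r$ is manifestly computable in polynomial time.
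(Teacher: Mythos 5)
Your proposal is correct and matches the paper's reasoning: the paper gives no separate proof of the corollary, simply observing that NP-completeness of the decision version (CD MIN EXTENSION) immediately yields NP-hardness of the optimization problem, and that the reduction already uses a finite language and two event sets. You merely spell out the standard decision-to-optimization self-reduction that the paper leaves implicit, so there is nothing to add.
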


  Similar minimal extension problems have been shown to be NP-hard in the literature, e.g., the minimal extension of observable event sets that guarantees observability of a language. However, unlike coobservability of decentralized control, conditional decomposability has an important property for large systems composed of many concurrent components---it can be checked in polynomial time in the number of components as shown in~\cite{SCL12}. In addition, an algorithm is presented there to compute an extension (but not necessarily the minimal one) of the shared event set such that the language under consideration becomes conditionally decomposable with respect to the original event sets $\Sigma_1$ and $\Sigma_2$ and the new (coordinator) event set $\Sigma_k$.

\section{Coordination control synthesis}\label{sec:controlsynthesis}
  In this section, we recall the coordination control problem and revise the necessary and sufficient conditions established in~\cite{ifacwc2011,scl2011,automatica2011} under which the problem is solvable. This revision leads to a simplification of existing notions and proofs, e.g., compare Definition~\ref{def:conditionalcontrollability} with \cite[Definition~9]{ifacwc2011} or the proof of Proposition~\ref{prop3} with the proof of \cite[Proposition~10]{ifacwc2011}. 
  
  We now summarize the results of this section compared to the existing results.
  The coordination control problem for non-prefix-closed languages was formulated in~\cite[Problem~7]{ifacwc2011}. The contribution of this paper is a simplification of the problem statement, namely, the prefix-closed part of the closed-loop system with a coordinator is shown to be a consequence of the non-prefix-closed case (see the note below the problem statement).
  The original definition of conditional controllability is simplified in Definition~\ref{def:conditionalcontrollability}.
  A simplified proof of Proposition~\ref{prop3} is presented.
  Proposition~\ref{prop4} is new.
  Theorem~\ref{th:controlsynthesissafety} is a simplified version of Theorem~18 stated in~\cite{ifacwc2011} without proof.

  \begin{problem}[Coordination control problem]\label{problem:controlsynthesis}
    Consider generators $G_1$ and $G_2$ over $\Sigma_1$ and $\Sigma_2$, respectively, and a generator $G_k$ (called a {\em coordinator\/}) over $\Sigma_k$. Assume that generators $G_1$ and $G_2$ are conditionally independent with respect to coordinator $G_k$, and that a specification $K \subseteq L_m(G_1 \| G_2 \| G_k)$ and its prefix-closure $\overline{K}$ are conditionally decomposable with respect to event sets $\Sigma_1$, $\Sigma_2$, and $\Sigma_k$. The aim of the coordination control synthesis is to determine nonblocking supervisors $S_1$, $S_2$, and $S_k$ for respective generators such that 
    \begin{align*}
      L_m(S_k/G_k)\subseteq P_k(K) && \text{ and } && L_m(S_i/ [G_i \parallel (S_k/G_k) ])\subseteq P_{i+k}(K),\, i=1,2\,,
    \end{align*}
    and the closed-loop system with the coordinator satisfies
    \begin{align*}
      L_m(S_1/ [G_1 \parallel (S_k/G_k) ]) ~ \parallel ~ L_m(S_2/ [G_2 \parallel (S_k/G_k) ]) & = K\,.
    \end{align*}
    $\hfill\diamond$
  \end{problem}

  One could expect that the equality $L(S_1/ [G_1 \parallel (S_k/G_k) ]) \parallel L(S_2/ [G_2 \parallel (S_k/G_k) ])=\overline{K}$ for prefix-closed languages should also be required in the statement of the problem. However, it is really sufficient to require only the equality for marked languages since it then implies that the equality $L(S_1/ [G_1 \parallel (S_k/G_k) ]) \parallel L(S_2/ [G_2 \parallel (S_k/G_k) ])=\overline{K}$ holds true because
  \begin{align*}
    \overline{K} &  = \overline{L_m(S_1/ [G_1 \parallel (S_k/G_k) ]) \parallel L_m(S_2/ [G_2 \parallel (S_k/G_k) ])}\\ 
                 &  \subseteq \overline{L_m(S_1/ [G_1 \parallel (S_k/G_k) ])} \parallel \overline{L_m(S_2/ [G_2 \parallel (S_k/G_k) ])}\\
                 &  \subseteq \overline{P_{1+k}(K)} \parallel \overline{P_{2+k}(K)} \\
                 &  = \overline{K}\,.
  \end{align*}
  Moreover, if such supervisors exist, their synchronous product is a nonblocking supervisor for the global plant, cf.~\cite{ifacwc2011}.

  Note that several conditions are required in the statement of the problem, namely, (i) the generators are conditionally independent with respect to the coordinator and (ii) the specification and its prefix-closure are conditionally decomposable with respect to event sets $\Sigma_1$, $\Sigma_2$, and $\Sigma_k$. These conditions can easily be fulfilled by the choice of an appropriate coordinator event set $\Sigma_k$. The reader is referred to~\cite{SCL12} for a polynomial algorithm extending a given event set so that the language becomes conditionally decomposable.
  
  In the statement of the problem, we have mentioned the notion of a coordinator. The fundamental question is the construction of such a coordinator. We now discuss one of the possible constructions of a suitable coordinator, which has already been discussed in the literature~\cite{ifacwc2011,scl2011,automatica2011}. We recall it here for the completeness.
  
  \begin{algorithm}[Construction of a coordinator]\label{algorithm}
    Consider generators $G_1$ and $G_2$ over $\Sigma_1$ and $\Sigma_2$, respectively, and let $K$ be a specification. Construct an event set $\Sigma_k$ and a coordinator $G_k$ as follows:
    \begin{enumerate}
      \item Set $\Sigma_k = \Sigma_1\cap \Sigma_2$ to be the set of all shared events.
      \item Extend $\Sigma_k$ so that $K$ and $\overline{K}$ are conditional decomposable, for instance using a method described in~\cite{SCL12}.
      \item Let the coordinator $G_k = P_k(G_1) \parallel P_k(G_2)$.
    \end{enumerate}
  \end{algorithm}
  
  So far, the only known condition ensuring that the projected generator is smaller than the original one is the observer property. Therefore, we might need to add step (2b) to extend the event set $\Sigma_k$ so that the projection $P_k$ is an $L(G_i)$-observer, for $i=1,2$, cf. Definition~\ref{def:observer} below. 

  Note that if we generalize this approach to more than two subsystems, the set $\Sigma_k$ of step $1$ is replaced with the set $\Sigma_s$ of all shared events defined in Section~\ref{sec:nphard} above.
  
  \begin{definition}[Observer]\label{def:observer}
    The projection $P_k:\Sigma^* \to \Sigma_k^*$, where $\Sigma_k$ is a subset of $\Sigma$, is an {\em $L$-observer\/} for a language $L$ over $\Sigma$ if, for all words $t$ in $P_k(L)$ and $s$ in $\overline{L}$, the word $P_k(s)$ is a prefix of $t$ implies that there exists a word $u$ in $\Sigma^*$ such that $su$ is in $L$ and $P_k(su)=t$.
  \end{definition}

  For a generator $G$ with $n$ states, the time and space complexity of the verification whether a projection $P$ is an $L(G)$-observer is $O(n^2)$, see~\cite{pcl08,pcl12}. An algorithm extending the event set to satisfy the property runs in time $O(n^3)$ and linear space. The most significant consequence of the observer property is the following theorem.
  
  \begin{theorem}[\cite{wong98}]
    If a projection $P$ is an $L(G)$-observer, for a generator $G$, then the minimal generator for the language $P(L(G))$ has no more states than the generator $G$.
  \end{theorem}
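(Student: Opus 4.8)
The plan is to argue through Nerode right-congruence classes, equivalently through left language quotients, since the number of states of the minimal generator of a regular prefix-closed language equals the number of its distinct nonempty residuals. Write $L = L(G)$, which is prefix-closed so that $\overline{L} = L$, and set $M = P(L)$. For $w \in M$ denote its residual $M/w = \{v \mid wv \in M\}$; the states of the minimal generator of $M$ are in bijection with $\{M/w \mid w \in M\}$. Analogously, for a reachable state $q$ of $G$ let $L_G(q) = \{u \in \Sigma^* \mid f(q,u) \text{ is defined}\}$ be the language generated from $q$; by determinism this depends only on $q$, and for any $s \in L$ with $f(q_0,s) = q$ we have $L/s = L_G(q)$.

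First I would reduce the whole statement to a single claim: for every $s \in L$, writing $q = f(q_0,s)$ and $t = P(s)$, the residual $M/t$ equals $P(L_G(q))$. Granting this, the assignment $q \mapsto P(L_G(q))$ is a map from the reachable states of $G$ onto the residuals of $M$, and it is surjective because every $t \in M$ is of the form $P(s)$ for some $s \in L$, whence $M/t = P(L_G(f(q_0,s)))$ lies in its image. Consequently the number of distinct residuals of $M$, and hence the number of states of the minimal generator of $M$, is at most the number of reachable states of $G$, which is at most the total number of states of $G$. This is exactly the asserted bound.

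It then remains to prove the claim, where the two inclusions have very different character. The inclusion $P(L_G(q)) \subseteq M/t$ is routine and uses only the homomorphism property of $P$: if $v = P(u)$ with $su \in L$, then $tv = P(s)P(u) = P(su) \in P(L) = M$, so $v \in M/t$; the observer hypothesis is not needed here. The reverse inclusion $M/t \subseteq P(L_G(q))$ is the crux and is precisely where the $L$-observer property is invoked. Take $v \in M/t$, so $tv \in M = P(L)$. Since $s \in \overline{L}$ and $P(s) = t$ is a prefix of $tv \in P(L)$, the observer property (applied with $tv$ in the role of $t$ and with $s$ in the role of $s$ in Definition~\ref{def:observer}) yields a word $u \in \Sigma^*$ with $su \in L$ and $P(su) = tv$. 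From $P(su) = t\,P(u) = tv$ we get $P(u) = v$, and from $su \in L$ with $f(q_0,s) = q$ we get $u \in L_G(q)$; hence $v = P(u) \in P(L_G(q))$, as required.

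The main obstacle is precisely this reverse inclusion: without the observer property a word $v$ might be completable to an element of $M$ only through some preimage $s'$ of $t$ reaching a state different from $q$, so that $M/t$ would fail to be captured by any single state of $G$ and the projection could genuinely need more states. The $L$-observer condition is exactly the guarantee that whatever continuation is possible downstream of the \emph{projected} word $t$ can already be realized upstream from the very state $q$, which is what collapses the potential subset-construction blow-up. The remaining ingredients, namely that nonempty residuals index the states of a minimal generator and that $L/s$ depends only on $f(q_0,s)$, are standard automata-theoretic facts that I would state but not belabor.
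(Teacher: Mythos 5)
Your proof is correct. Note that the paper does not prove this statement at all---it is imported verbatim from the cited reference \cite{wong98}---so there is no in-paper argument to compare against; judged on its own, your residual-based derivation is sound and complete. The key identity $M/t = P(L_G(q))$ for any $s\in L$ with $f(q_0,s)=q$ and $P(s)=t$ is exactly the right invariant: the easy inclusion uses only that $P$ is a homomorphism, and the reverse inclusion is precisely the observer property applied to $tv\in P(L)$ and the prefix $P(s)=t$ (using $\overline{L}=L$ since $L=L(G)$ is generated, hence prefix-closed, so $P(L)$ is prefix-closed as well and its nonempty residuals are indexed by $t\in M$). The surjection $q\mapsto P(L_G(q))$ from reachable states onto the residuals of $M$ then gives the state bound immediately. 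This is essentially the standard argument behind Wong's result, packaged in its cleanest Nerode-quotient form.
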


  This is an important result because it guarantees that the coordinator computed in Algorithm~\ref{algorithm} is smaller than the plant whenever the projection $P_k$ is an $L(G_1)\parallel L(G_2)$-observer.

\subsection{Conditional controllability}
  The concept of conditional controllability introduced in~\cite{KvS08} and later studied in~\cite{ifacwc2011,scl2011,automatica2011} plays the central role in the coordination control approach. In this paper, we revise and simplify this notion. In what follows, we use the notation $\Sigma_{i,u}=\Sigma_i\cap \Sigma_u$ to denote the set of locally uncontrollable events of the event set $\Sigma_i$.

  \begin{definition}\label{def:conditionalcontrollability}
    Let $G_1$ and $G_2$ be generators over $\Sigma_1$ and $\Sigma_2$, respectively, and let $G_k$ be a coordinator over $\Sigma_k$. A language $K\subseteq L(G_1\| G_2\| G_k)$ is {\em conditionally controllable\/} for generators $G_1$, $G_2$, $G_k$ and uncontrollable event sets $\Sigma_{1,u}$, $\Sigma_{2,u}$, $\Sigma_{k,u}$ if
    \begin{enumerate}
      \item\label{cc1} $P_k(K)$ is controllable with respect to $L(G_k)$ and $\Sigma_{k,u}$,
      \item\label{cc2} $P_{1+k}(K)$ is controllable with respect to $L(G_1) \parallel \overline{P_k(K)}$ and $\Sigma_{1+k,u}$,
      \item\label{cc3} $P_{2+k}(K)$ is controllable with respect to $L(G_2) \parallel \overline{P_k(K)}$ and $\Sigma_{2+k,u}$,
    \end{enumerate}
    where $\Sigma_{i+k,u}=(\Sigma_i\cup \Sigma_k)\cap \Sigma_u$, for $i=1,2$.
  \end{definition}
  
  The difference between Definition~\ref{def:conditionalcontrollability} and the definition in previous papers is that in item 2 we write $L(G_1) \parallel \overline{P_k(K)}$ instead of $L(G_1) \parallel \overline{P_k(K)} \parallel P_k^{2+k} (L(G_2) \| \overline{P_k(K)})$. This is possible because the assumption $K\subseteq L(G_1\| G_2\| G_k)$ implies the inclusion $\overline{P_k(K)}\subseteq (P^{k}_{k\cap 2})^{-1}P^2_{k\cap 2}(L(G_2))$, which results in the equality 
  \begin{align*}
    \overline{P_k(K)} \| P_k^{2+k} (L(G_2) \| \overline{P_k(K)}) & = \overline{P_k(K)} \| P_{k\cap 2}^{2} (L(G_2))\\
      & =\overline{P_k(K)}\cap (P^{k}_{k\cap 2})^{-1}P^2_{k\cap 2}(L(G_2))\\
      & =\overline{P_k(K)}
  \end{align*}
  by Lemma~\ref{lemma:Wonham} (see the Appendix).
  Hence we have the following.
  
  \begin{lemma}
    Definition~\ref{def:conditionalcontrollability} and \cite[Definition~9]{ifacwc2011} of conditional controllability are equivalent.
  \end{lemma}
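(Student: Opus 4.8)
The plan is to exploit the fact that the two definitions of conditional controllability differ in exactly one component, reduce the claim to a single equality between reference plant languages, and then invoke the computation displayed immediately before the statement.

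First I would observe that item~\ref{cc1} of Definition~\ref{def:conditionalcontrollability}, the controllability of $P_k(K)$ with respect to $L(G_k)$ and $\Sigma_{k,u}$, is literally identical in both definitions, so nothing has to be verified for it. Moreover, by the symmetry between the indices $1$ and $2$ it is enough to treat item~\ref{cc2}; item~\ref{cc3} is then obtained by interchanging the roles of $G_1$ and $G_2$. Since the tested language $P_{1+k}(K)$ and the uncontrollable event set $\Sigma_{1+k,u}$ are the same in both definitions, the whole lemma reduces to showing that the two reference plant languages against which $P_{1+k}(K)$ is required to be controllable coincide, i.e. that
\[
  L(G_1) \| \overline{P_k(K)} \| P_k^{2+k}(L(G_2) \| \overline{P_k(K)}) = L(G_1) \| \overline{P_k(K)}\,.
\]
Controllability with respect to equal plant languages (and the same uncontrollable events) is the same condition, so this single equality yields the equivalence.

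The key step is therefore the displayed equality. Using associativity of the synchronous product, I would rewrite its left-hand side as $L(G_1) \| \bigl(\overline{P_k(K)} \| P_k^{2+k}(L(G_2)\|\overline{P_k(K)})\bigr)$ and then apply the chain of equalities derived just before the statement of this lemma, which shows that the parenthesised factor already equals $\overline{P_k(K)}$. That chain rests on two ingredients: the composition identity $\overline{P_k(K)} \| P_k^{2+k}(L(G_2)\|\overline{P_k(K)}) = \overline{P_k(K)} \| P^2_{k\cap 2}(L(G_2))$, and the inclusion $\overline{P_k(K)} \subseteq (P^{k}_{k\cap 2})^{-1}P^2_{k\cap 2}(L(G_2))$, which follows from the standing assumption $K \subseteq L(G_1 \| G_2 \| G_k)$ of Definition~\ref{def:conditionalcontrollability}; together with Lemma~\ref{lemma:Wonham} they collapse the factor to $\overline{P_k(K)}$.

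The main obstacle I anticipate is the careful justification of these two ingredients. The composition identity requires unwinding the synchronous product into inverse projections and using that the only events of $\Sigma_k$ that $L(G_2)$ can synchronise on lie in $\Sigma_k\cap\Sigma_2$, while establishing $\overline{P_k(K)} \subseteq (P^{k}_{k\cap 2})^{-1}P^2_{k\cap 2}(L(G_2))$ amounts to tracing how the inclusion $K \subseteq L(G_1 \| G_2 \| G_k)$ constrains the $(\Sigma_k\cap\Sigma_2)$-behaviour of $P_k(K)$ through the second component. These are routine but notation-heavy projection manipulations; once they are in place, associativity of $\|$ and Lemma~\ref{lemma:Wonham} finish item~\ref{cc2} with no further difficulty, and item~\ref{cc3} follows by the symmetric argument.
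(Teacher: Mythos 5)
Your proposal is correct and follows essentially the same route as the paper: the paper's justification is precisely the observation that the two definitions differ only in the reference plant of items~2 and~3, together with the displayed chain of equalities showing $\overline{P_k(K)} \parallel P_k^{2+k}(L(G_2)\parallel\overline{P_k(K)}) = \overline{P_k(K)}$ via the inclusion $\overline{P_k(K)}\subseteq (P^{k}_{k\cap 2})^{-1}P^2_{k\cap 2}(L(G_2))$ and Lemma~\ref{lemma:Wonham}. Your additional remarks about which projection manipulations need care are accurate but do not change the argument.
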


  The following proposition demonstrates that every conditionally controllable and conditionally decomposable language is controllable. 
  \begin{proposition}\label{prop3}
    Let $G_i$ be a generator over $\Sigma_i$, for $i=1,2,k$, and let $G=G_1\| G_2\| G_k$. Let $K\subseteq L_m(G)$ be such a specification that the language $\overline{K}$ is conditionally decomposable with respect to event sets $\Sigma_1$, $\Sigma_2$, $\Sigma_k$, and conditionally controllable for generators $G_1$, $G_2$, $G_k$ and uncontrollable event sets $\Sigma_{1,u}$, $\Sigma_{2,u}$, $\Sigma_{k,u}$. Then the language $K$ is controllable with respect to the plant language $L(G)$ and uncontrollable event set $\Sigma_u=\Sigma_{1,u}\cup\Sigma_{2,u}$.
  \end{proposition}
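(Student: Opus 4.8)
The plan is to verify the defining inequality of controllability directly, namely $\overline{K}\,\Sigma_u\cap L(G)\subseteq\overline{K}$. So I would fix a word $s\in\overline{K}$ and an uncontrollable event $a\in\Sigma_u$ with $sa\in L(G)$, and aim to prove $sa\in\overline{K}$. Since $\overline{K}$ is conditionally decomposable and the projection commutes with the prefix closure (so that $P_{i+k}(\overline{K})=\overline{P_{i+k}(K)}$), I have $\overline{K}=\overline{P_{1+k}(K)}\parallel\overline{P_{2+k}(K)}$. By the characterization of the synchronous product it therefore suffices to establish the two membership statements $P_{1+k}(sa)\in\overline{P_{1+k}(K)}$ and $P_{2+k}(sa)\in\overline{P_{2+k}(K)}$; the argument for the second is symmetric to the first.

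The argument is organized in layers following the three items of Definition~\ref{def:conditionalcontrollability}. First I would establish the auxiliary fact $P_k(sa)\in\overline{P_k(K)}$. If $a\notin\Sigma_k$ this is immediate, since then $P_k(sa)=P_k(s)\in P_k(\overline{K})=\overline{P_k(K)}$. If $a\in\Sigma_k$, then $a\in\Sigma_{k,u}$ because $a\in\Sigma_u$, and $sa\in L(G)$ gives $P_k(sa)\in L(G_k)$; applying item~\ref{cc1} (controllability of $P_k(K)$ with respect to $L(G_k)$ and $\Sigma_{k,u}$) to $P_k(s)\in\overline{P_k(K)}$ then yields $P_k(sa)\in\overline{P_k(K)}$.

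Next I would lift this to the first projection. If $a\notin\Sigma_1\cup\Sigma_k$ then $P_{1+k}(sa)=P_{1+k}(s)\in\overline{P_{1+k}(K)}$ trivially. Otherwise $a\in(\Sigma_1\cup\Sigma_k)\cap\Sigma_u=\Sigma_{1+k,u}$, so $P_{1+k}(sa)=P_{1+k}(s)\,a$ lies in $\overline{P_{1+k}(K)}\,\Sigma_{1+k,u}$. To invoke item~\ref{cc2} I must check that this word lies in the plant language $L(G_1)\parallel\overline{P_k(K)}$, which by the synchronous-product characterization reduces to $P_1(sa)\in L(G_1)$ and $P_k(sa)\in\overline{P_k(K)}$. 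The former follows from $sa\in L(G)$, and the latter is exactly the auxiliary fact just proved. Item~\ref{cc2} then gives $P_{1+k}(sa)\in\overline{P_{1+k}(K)}$, and the symmetric argument with item~\ref{cc3} gives $P_{2+k}(sa)\in\overline{P_{2+k}(K)}$; conditional decomposability of $\overline{K}$ then assembles $sa\in\overline{K}$, completing the proof.

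I expect the main obstacle to be the middle layer: correctly verifying that the projected words $P_{1+k}(sa)$ and $P_{2+k}(sa)$ belong to the respective plant languages $L(G_i)\parallel\overline{P_k(K)}$ against which conditional controllability is formulated. This is where the global hypothesis $sa\in L(G)$ must be decomposed into its components and, crucially, where the controllability of $P_k(K)$ from item~\ref{cc1} is consumed to supply the $\overline{P_k(K)}$-component. The event-location case analysis---whether $a$ lies in $\Sigma_k$, in $\Sigma_1\setminus\Sigma_k$, or outside $\Sigma_1\cup\Sigma_k$---must be tracked carefully so that the uncontrollable event is always placed in the correct local uncontrollable set $\Sigma_{k,u}$ or $\Sigma_{i+k,u}$, and so that the trivial cases (where the projection simply erases $a$) are separated cleanly from the cases genuinely requiring a controllability hypothesis.
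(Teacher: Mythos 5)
Your proof is correct, but it takes a genuinely different route from the paper. The paper argues at the level of whole languages: it applies Lemma~\ref{feng} to the two local controllability conditions to conclude that $\overline{K}=\overline{P_{1+k}(K)}\parallel\overline{P_{2+k}(K)}$ is controllable with respect to the intermediate plant $L(G)\parallel\overline{P_k(K)}$, then uses item~\ref{cc1} together with Lemma~\ref{feng} again to show that this intermediate plant is itself controllable with respect to $L(G)$, and finally invokes transitivity of controllability (Lemma~\ref{lem_trans}). You instead chase a single word $sa$ through the projections and verify the controllability inclusion directly; your two membership targets $P_{i+k}(sa)\in\overline{P_{i+k}(K)}$ and your auxiliary fact $P_k(sa)\in\overline{P_k(K)}$ are the element-wise shadows of the paper's two applications of Lemma~\ref{feng}, and your final assembly via conditional decomposability replaces the transitivity step. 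What the paper's version buys is brevity and reuse of stated lemmas (with synchronous nonconflictingness holding automatically because all the languages involved are prefix-closed); what your version buys is a self-contained argument that makes explicit exactly where each of the three items of Definition~\ref{def:conditionalcontrollability} is consumed and how the case analysis on the location of the uncontrollable event $a$ (in $\Sigma_k$, in $\Sigma_i\setminus\Sigma_k$, or erased by the projection) is handled. Both are valid; no gap.
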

  \begin{proof}
    Since the language $\overline{P_{1+k}(K)}$ is controllable with respect to $L(G_1) \parallel \overline{P_k(K)}$ and $\Sigma_{1+k,u}$, and $\overline{P_{2+k}(K)}$ is controllable with respect to $L(G_2) \parallel \overline{P_k(K)}$ and $\Sigma_{2+k,u}$, Lemma~\ref{feng} implies that the language $\overline{K} = \overline{P_{1+k}(K)} \parallel \overline{P_{2+k}(K)}$ is controllable with respect to $L(G_1) \parallel \overline{P_k(K)} \parallel L(G_2) \parallel \overline{P_k(K)} = L(G) \parallel \overline{P_k(K)}$ and $\Sigma_u$, where the equality is by commutativity of the synchronous product and by the fact that $\overline{P_k(K)}\subseteq L(G_k)$. As the language $\overline{P_k(K)}$ is controllable with respect to $L(G_k)$ and $\Sigma_{k,u}$, by Definition~\ref{def:conditionalcontrollability}, the language $L(G) \parallel \overline{P_k(K)}$ is controllable with respect to $L(G) \parallel L(G_k) = L(G)$ by Lemma~\ref{feng}. Finally, by Lemma~\ref{lem_trans}, $\overline{K}$ is controllable with respect to $L(G)$ and $\Sigma_u$, which means that $K$ is controllable with respect to $L(G)$ and $\Sigma_u$.
  \qed\end{proof}
  
  On the other hand, controllability does not imply conditional controllability.
  \begin{example}
    Let $G$ be a generator such that $L(G)=\overline{\{au\}}\parallel \overline{\{bu\}}=\overline{\{abu,bau\}}$. Then the language $K=\{a\}$ is controllable with respect to $L(G)$ and $\Sigma_u=\{u\}$. Moreover, both languages $K$ and $\overline{K}$ are conditionally decomposable with respect to event sets $\{a,u\}$, $\{b,u\}$, and $\Sigma_k=\{u\}$, but the language $P_k(K)=\{\varepsilon\}$ is not controllable with respect to $L(G_k)=P_k(L(G))=\{u\}$ and $\Sigma_{k,u}=\{u\}$.
  \hfill$\triangleleft$\end{example}

  However, we show below that if the observer property and local control consistency (LCC) are satisfied, the previous implication holds. To prove this, we need the following definition of LCC. Note that unlike our previous papers, we use a weaker notion of local control consistency (LCC) presented in~\cite{SB11} instead of output control consistency (OCC). 

  \begin{definition}[LCC]
    Let $L$ be a prefix-closed language over $\Sigma$, and let $\Sigma_0$ be a subset of $\Sigma$. The projection $P_0:\Sigma^*\to \Sigma_0^*$ is {\em locally control consistent\/} (LCC) with respect to a word $s\in L$ if for all events $\sigma_u\in \Sigma_0\cap \Sigma_u$ such that $P_0(s)\sigma_u\in P_0(L)$, it holds that either there does not exist any word $u\in (\Sigma\setminus \Sigma_0)^*$ such that $su\sigma_u \in L$, or there exists a word $u\in (\Sigma_u\setminus \Sigma_0)^*$ such that $su\sigma_u \in L$. The projection $P_0$ is LCC with respect to a language $L$ if $P_0$ is LCC for all words of $L$.
  \end{definition}

  Now the opposite implication to the one proven in Proposition~\ref{prop3} can be stated.

  \begin{proposition}\label{prop4}
    Let $L$ be a prefix-closed language over $\Sigma$, and let $K\subseteq L$ be a language that is controllable with respect to $L$ and $\Sigma_u$. If, for $i\in\{k,1+k,2+k\}$, the projection $P_i$ is an $L$-observer and LCC for $L$, then the language $K$ is conditionally controllable.
  \end{proposition}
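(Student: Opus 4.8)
The plan is to verify directly the three controllability conditions of Definition~\ref{def:conditionalcontrollability}, all driven by a single engine: the claim that \emph{if $P_0\colon\Sigma^*\to\Sigma_0^*$ is an $L$-observer and LCC for $L$, and $K\subseteq L$ is controllable with respect to $L$ and $\Sigma_u$, then $\overline{P_0(K)}=P_0(\overline{K})$ is controllable with respect to $P_0(L)$ and $\Sigma_0\cap\Sigma_u$.} Because controllability constrains only prefix closures and projection commutes with prefix closure, it suffices to argue for the prefix-closed language $\overline{K}$, and I would then instantiate the engine three times, with $\Sigma_0\in\{\Sigma_k,\ \Sigma_1\cup\Sigma_k,\ \Sigma_2\cup\Sigma_k\}$, i.e.\ with the projections $P_k$, $P_{1+k}$, $P_{2+k}$.

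To prove the engine claim I would fix $t\in\overline{P_0(K)}$ and an uncontrollable event $\sigma_u\in\Sigma_0\cap\Sigma_u$ with $t\sigma_u\in P_0(L)$, and show $t\sigma_u\in\overline{P_0(K)}$. Pick $s\in\overline{K}\subseteq L$ with $P_0(s)=t$. Since $t\sigma_u\in P_0(L)$ and $P_0(s)=t$ is a prefix of $t\sigma_u$, the observer property yields $su'\in L$ with $P_0(su')=t\sigma_u$; as $P_0(u')=\sigma_u$, writing $u'=w\sigma_u w''$ with $w\in(\Sigma\setminus\Sigma_0)^*$ gives $sw\sigma_u\in L$ by prefix-closedness. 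Hence the first alternative of LCC at $s$ fails, so LCC produces $v\in(\Sigma_u\setminus\Sigma_0)^*$ with $sv\sigma_u\in L$. Now $v$ consists only of uncontrollable events, so iterating controllability of $\overline{K}$ along $v$ gives $sv\in\overline{K}$, and one further controllability step (using $sv\sigma_u\in L$ and $\sigma_u\in\Sigma_u$) gives $sv\sigma_u\in\overline{K}$. Projecting, $P_0(sv\sigma_u)=t\sigma_u\in P_0(\overline{K})=\overline{P_0(K)}$, as required.

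Applying the engine three times shows that $\overline{P_k(K)}$, $\overline{P_{1+k}(K)}$, $\overline{P_{2+k}(K)}$ are controllable with respect to $P_k(L)$, $P_{1+k}(L)$, $P_{2+k}(L)$ and the uncontrollable sets $\Sigma_{k,u}$, $\Sigma_{1+k,u}$, $\Sigma_{2+k,u}$. It then remains to replace these plant languages by those of Definition~\ref{def:conditionalcontrollability}. For item~\ref{cc1} I would use $L(G_k)=P_k(L)$, which follows from the distribution of $P_k$ over the synchronous product (valid since $\Sigma_k$ contains all shared events) together with the coordinator construction $G_k=P_k(G_1)\parallel P_k(G_2)$; controllability with respect to $P_k(L)$ is then literally item~\ref{cc1}. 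For items~\ref{cc2} and~\ref{cc3} I would establish the inclusions $L(G_1)\parallel\overline{P_k(K)}\subseteq P_{1+k}(L)$ and $L(G_2)\parallel\overline{P_k(K)}\subseteq P_{2+k}(L)$, together with $P_{i+k}(K)\subseteq L(G_i)\parallel\overline{P_k(K)}$; since controllability with respect to a plant is inherited by any subplant still containing the specification, the controllability of $\overline{P_{i+k}(K)}$ with respect to $P_{i+k}(L)$ descends to controllability with respect to $L(G_i)\parallel\overline{P_k(K)}$, giving items~\ref{cc2} and~\ref{cc3}.

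The main obstacle is twofold. The delicate point in the engine claim is ruling out the degenerate branch of LCC: LCC only helps once $\sigma_u$ is known to be reachable from $s$ through locally invisible events, and it is precisely the observer property that supplies such a witness $sw\sigma_u\in L$; without the observer hypothesis the argument collapses. The second, more bookkeeping-heavy obstacle is the plant-matching step: proving $L(G_1)\parallel\overline{P_k(K)}\subseteq P_{1+k}(L)$ requires an interleaving argument that reconstructs a word of $L=L(G_1)\parallel L(G_2)$ projecting onto a prescribed word over $\Sigma_1\cup\Sigma_k$, using that every event of $\Sigma_2\setminus\Sigma_k$ lies outside $\Sigma_1$ (because $\Sigma_1\cap\Sigma_2\subseteq\Sigma_k$) and may therefore be inserted freely without disturbing the $\Sigma_1$-projection.
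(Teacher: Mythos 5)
Your proof is correct and follows essentially the same route as the paper's: lift $t\sigma_u$ into $L$ via the observer property, convert the locally invisible continuation into an uncontrollable one via LCC, and close with controllability of $K$ before projecting back. The only difference is organizational --- you factor the argument through controllability with respect to $P_0(L)$ and then descend to the plant languages of Definition~\ref{def:conditionalcontrollability}, making explicit the inclusions (e.g.\ $L(G_1)\parallel\overline{P_k(K)}\subseteq P_{1+k}(L)$) that the paper's proof uses implicitly.
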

  \begin{proof}
    Let $s\in \overline{P_k(K)}$, $a\in \Sigma_{k,u}$, and $sa \in P_k(L)$. Then there exists a word $w$ in $\overline{K}$ such that $P_k(w)=s$. By the observer property, there exists a word $u$ in $(\Sigma\setminus \Sigma_k)^*$ such that $wua\in L$ and $P_k(wua)=sa$. By LCC, there exists another word $u'$ in $(\Sigma_u\setminus \Sigma_k)^*$ such that $wu'a\in L$, that is, $wu'a$ is in $\overline{K}$ by controllability. Hence, $sa\in \overline{P_k(K)}$.
    
    Let $s\in \overline{P_{1+k}(K)}$, $a\in \Sigma_{1+k,u}$, and $sa\in L(G_1)\parallel \overline{P_k(K)}$. Then there exists a word $w$ in $\overline{K}$ such that $P_{1+k}(w)=s$. By the observer property, there exists a word $u$ in $(\Sigma\setminus \Sigma_{1+k})^*$ such that $wua \in L$ and $P_{1+k}(wua)=sa$. By LCC, there exists another word $u'$ in $(\Sigma_u\setminus \Sigma_{1+k})^*$ such that $wu'a\in L$, that is, $wu'a$ is in $\overline{K}$ by controllability. Hence, $sa\in \overline{P_{1+k}(K)}$.

    The proof for the case of $k+2$ is similar to that of $k+1$.
  \qed\end{proof}

\subsection{Conditionally closed languages}
  In this subsection we turn our attention to general specification languages that need not be prefix-closed.
  Analogously to the notion of $L_m(G)$-closed languages, we recall the notion of conditionally-closed languages defined in~\cite{ifacwc2011}.

  \begin{definition}\label{def:conditionalclosed}
    A nonempty language $K$ over $\Sigma$ is {\em conditionally closed\/} for generators $G_1$, $G_2$, $G_k$ if
    \begin{enumerate}
      \item\label{ccl1} $P_k(K)$ is $L_m(G_k)$-closed,
      \item\label{ccl2} $P_{1+k}(K)$ is $L_m(G_1) \parallel P_k(K)$-closed,
      \item\label{ccl3} $P_{2+k}(K)$ is $L_m(G_2) \parallel P_k(K)$-closed.
    \end{enumerate}
  \end{definition}

  If a language $K$ is conditionally closed and conditionally controllable, then there exists a nonblocking supervisor $S_k$ such that $L_m(S_k/G_k)=P_k(K)$, which follows from the basic theorem of supervisory control applied to languages $P_k(K)$ and $L(G_k)$, see~\cite{CL08}.
  
  As noted in \cite[page 164]{CL08}, if $K\subseteq L_m(G)$ is $L_m(G)$-closed, then so is the supremal controllable sublanguage of $K$. However, this does not imply that the language $P_k(K)$ is $L_m(G_k)$-closed, for any generator $G=G_1\| G_2\| G_k$ such that the coordinator $G_k$ makes generators $G_1$ and $G_2$ conditionally independent.
  \begin{example}\label{ex2}
    Let the event sets be $\Sigma_{1} = \{a_1,a\}$, $\Sigma_{2}=\{a_2,a\}$, and $\Sigma_k=\{a\}$, respectively, and let the specification language be $K=\{a_1a_2a,a_2a_1a\}$. Then the application of projections results in languages $P_{1+k}(K) = \{a_1a\}$, $P_{2+k}(K) = \{a_2a\}$, and $P_{k}(K) = \{a\}$, and the language $K = P_{1+k}(K)\parallel P_{2+k}(K)$ is conditionally decomposable. Define generators $G_1$, $G_2$, $G_k$ so that $L_m(G_1)=P_{1+k}(K)$, $L_m(G_2)=P_{2+k}(K)$, and $L_m(G_k)=\overline{P_{k}(K)}=\{\eps,a\}$. Then $L_m(G)=K$ and the language $K$ is $L_m(G)$-closed. However, the language $P_k(K)\subset \overline{P_k(K)}$ is not $L_m(G_k)$-closed.
    \hfill$\triangleleft$
  \end{example}

\subsection{Existence of supervisors}
  The following theorem is a revised version (based on the simplification of conditional controllability, Definition~\ref{def:conditionalcontrollability}) of a result presented without proof in~\cite{ifacwc2011}.
  
  \begin{theorem}\label{th:controlsynthesissafety}
    Consider the setting of Problem~\ref{problem:controlsynthesis}. There exist nonblocking supervisors $S_1$, $S_2$, $S_k$ such that
    \begin{align}\label{eq:controlsynthesissafety}
      L_m(S_1/[G_1 \parallel (S_k/G_k)]) \parallel L_m(S_2/[G_2 \parallel (S_k/G_k)]) =  K
    \end{align}
    if and only if the specification language $K$ is both conditionally controllable with respect to generators $G_1$, $G_2$, $G_k$ and uncontrollable event sets $\Sigma_{1,u}$, $\Sigma_{2,u}$, $\Sigma_{k,u}$, and conditionally closed with respect to generators $G_1$, $G_2$, $G_k$.
  \end{theorem}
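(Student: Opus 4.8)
The plan is to reduce the equivalence to three applications of the classical basic theorem of supervisory control recalled in Section~\ref{sec:preliminaries}, applied successively to the three control levels of Problem~\ref{problem:controlsynthesis}: first to the coordinator $G_k$ with specification $P_k(K)$, then to the plant $G_1 \parallel (S_k/G_k)$ with specification $P_{1+k}(K)$, and finally to $G_2 \parallel (S_k/G_k)$ with specification $P_{2+k}(K)$. The three items of Definition~\ref{def:conditionalcontrollability} and the three items of Definition~\ref{def:conditionalclosed} are precisely the controllability and $L_m$-closedness hypotheses of the basic theorem at these three levels, so the whole argument reduces to matching the reference languages correctly and, in the necessity part, to promoting inclusions to equalities.

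For sufficiency I would first apply the basic theorem to $P_k(K)$ and $L(G_k)$: item~\ref{cc1} of Definition~\ref{def:conditionalcontrollability} and item~\ref{ccl1} of Definition~\ref{def:conditionalclosed} give a nonblocking supervisor $S_k$ with $L_m(S_k/G_k)=P_k(K)$, and nonblockingness yields $L(S_k/G_k)=\overline{P_k(K)}$. The intermediate plant $G_1 \parallel (S_k/G_k)$ then has generated language $L(G_1)\parallel\overline{P_k(K)}$ and marked language $L_m(G_1)\parallel P_k(K)$, which are exactly the reference languages appearing in item~\ref{cc2} and item~\ref{ccl2}; hence the basic theorem yields a nonblocking $S_1$ with $L_m(S_1/[G_1 \parallel (S_k/G_k)])=P_{1+k}(K)$, and symmetrically a nonblocking $S_2$ with $L_m(S_2/[G_2 \parallel (S_k/G_k)])=P_{2+k}(K)$. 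Taking the synchronous product and invoking conditional decomposability of $K$ (assumed in Problem~\ref{problem:controlsynthesis}) gives $P_{1+k}(K)\parallel P_{2+k}(K)=K$, which is~\eqref{eq:controlsynthesissafety}. The inclusion $P_k(K)\subseteq L_m(G_k)$ needed to invoke the theorem follows from $K\subseteq L_m(G)$ by projecting $L_m(G)=L_m(G_1)\parallel L_m(G_2)\parallel L_m(G_k)$ onto $\Sigma_k$.

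For necessity, write $M_k=L_m(S_k/G_k)$ and $M_i=L_m(S_i/[G_i \parallel (S_k/G_k)])$; the constraints of Problem~\ref{problem:controlsynthesis} give $M_k\subseteq P_k(K)$, $M_i\subseteq P_{i+k}(K)$, and $M_1\parallel M_2=K$. The key step is to upgrade these inclusions to equalities. Applying $P_{1+k}$ to $M_1\parallel M_2=K$ and using the elementary inclusion $P_{1+k}(M_1\parallel M_2)\subseteq M_1$ (projection of a synchronous product onto a component) together with $M_1\subseteq P_{1+k}(K)$ forces $M_1=P_{1+k}(K)$, and symmetrically $M_2=P_{2+k}(K)$; projecting $M_1$ further onto $\Sigma_k$ and using $P_k(M_1)\subseteq M_k\subseteq P_k(K)=P_k(P_{1+k}(K))$ forces $M_k=P_k(K)$. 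Once the three marked languages are identified, the necessity half of the basic theorem, applied at each level (and again using $L(S_k/G_k)=\overline{P_k(K)}$ from nonblockingness to identify the reference plant languages), delivers exactly the three controllability and three closedness conditions, i.e.\ conditional controllability and conditional closedness of $K$.

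The main obstacle is the necessity direction, specifically the promotion of the prescribed inclusions to equalities: the problem statement only postulates $M_k\subseteq P_k(K)$ and $M_i\subseteq P_{i+k}(K)$ together with the single product equality $M_1\parallel M_2=K$, and one must show that this already pins down all three marked languages. This rests on the projection identity $P_k\circ P_{1+k}=P_k$ and on the fact that a synchronous product is contained in the inverse projection of each of its factors; care is needed to track which projection (from $\Sigma^*$, from $(\Sigma_1\cup\Sigma_k)^*$, etc.) is used at each step, since these are formally different maps even though the notation collapses them. A secondary point to verify cleanly is that the generated and marked languages of the composed plants $G_i \parallel (S_k/G_k)$ equal the reference languages $L(G_i)\parallel\overline{P_k(K)}$ and $L_m(G_i)\parallel P_k(K)$ used in Definitions~\ref{def:conditionalcontrollability} and~\ref{def:conditionalclosed}, which is precisely where nonblockingness of $S_k$ is essential.
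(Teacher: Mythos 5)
Your proof is correct and follows essentially the same route as the paper's: three applications of the basic supervisory control theorem, one per level, with nonblockingness of $S_k$ used to identify $L(S_k/G_k)$ with $\overline{P_k(K)}$ and $L_m(S_k/G_k)$ with $P_k(K)$, and with the necessity direction obtained by projecting the product equality onto $\Sigma_k$, $\Sigma_{1+k}$, $\Sigma_{2+k}$ to promote the prescribed inclusions to equalities. The only detail the paper spells out that you leave implicit is the inclusion $P_{1+k}(K)\subseteq L_m(G_1)\parallel P_k(K)$ needed to invoke the basic theorem at the second level; it follows just as your argument for $P_k(K)\subseteq L_m(G_k)$ does.
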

  \begin{proof}
    Let $K$ satisfy the assumptions, and let $G=G_1 \| G_2 \| G_k$ be the global plant. As the language $K$ is a subset of $L_m(G)$, its projection $P_k(K)$ is a subset of $L_m(G_k)$. By the assumption, the language $P_k(K)$ is $L_m(G_k)$-closed and controllable with respect to $L(G_k)$ and $\Sigma_{k,u}$. By the basic theorem of supervisory control~\cite{RW87} there exists a nonblocking supervisor $S_k$ such that $L_m(S_k/G_k) = P_k(K)$. As the language $P_{1+k}(K)$ is a subset of languages $L_m(G_1\| G_k)$ and $(P_{k}^{1+k})^{-1} P_k(K)$, we have that $P_{1+k}(K)$ is included in $L_m(G_1) \parallel P_k(K)$. These relations and the assumption that the system is conditionally controllable and conditionally closed imply the existence of a nonblocking supervisor $S_1$ such that $L_m(S_1/ [ G_1 \parallel (S_k/G_k) ]) = P_{1+k}(K)$. A similar argument shows that there exists a nonblocking supervisor $S_2$ such that $L_m(S_2/ [ G_2 \parallel (S_k/G_k) ]) = P_{2+k}(K)$. Since $K$ and $\overline{K}$ are conditionally decomposable, it follows that $L_m(S_1/[G_1 \parallel (S_k/G_k)]) \parallel L_m(S_2/[G_2 \parallel (S_k/G_k)]) = P_{1+k}(K) \parallel P_{2+k}(K)  = K$.

    To prove the converse implication, the projections $P_k$, $P_{1+k}$, $P_{2+k}$ are applied to (\ref{eq:controlsynthesissafety}), which can be rewritten as $K = L_m(S_1 \| G_1 \parallel S_2 \| G_2 \parallel S_k \| G_k)$. Thus, the projection $P_k(K) = P_k\left(L_m(S_1 \| G_1 \parallel S_2 \| G_2 \parallel S_k \| G_k)\right)$ is a subset of $L_m(S_k \| G_k) = L_m(S_k/G_k)$. On the other hand, $L_m(S_k/G_k) \subseteq P_k(K)$, cf. Problem~\ref{problem:controlsynthesis}. Hence, by the basic controllability theorem, the language $P_k(K)$ is both controllable with respect to $L(G_k)$ and $\Sigma_{k,u}$, and $L_m(G_k)$-closed. As $\Sigma_{1+k}\cap \Sigma_{2+k}=\Sigma_k$, the application of projection $P_{1+k}$ to (\ref{eq:controlsynthesissafety}) and assumptions of Problem~\ref{problem:controlsynthesis} give that $P_{1+k}(K) \subseteq L_m(S_1/ [ G_1 \parallel (S_k/G_k)]) \subseteq P_{1+k}(K)$. Taking $G_1 \| (S_k/G_k)$ as a new plant, we get from the basic supervisory control theorem that the language $P_{1+k}(K)$ is controllable with respect to $L(G_1 \| (S_k/G_k))$ and $\Sigma_{1+k,u}$, and that it is $L_m(G_1 \| (S_k/G_k))$-closed. The case of the language $P_{2+k}(K)$ is analogous.
  \qed\end{proof}

\section{Supremal conditionally controllable sublanguages}\label{sec:procedure}
  Necessary and sufficient conditions for the existence of nonblocking supervisors $S_1$, $S_2$, and $S_k$ that achieve a considered specification language using our coordination control architecture have been presented in Theorem~\ref{th:controlsynthesissafety}. However, in many cases control specifications fail to be conditionally controllable and, similarly as in the monolithic supervisory control, supremal conditionally controllable sublanguages should be investigated.

  Let $\supCC(K, L, (\Sigma_{1,u}, \Sigma_{2,u}, \Sigma_{k,u}))$ denote the supremal conditionally controllable sublanguage of $K$ with respect to $L=L(G_1\|G_2\|G_k)$ and sets of uncontrollable events $\Sigma_{1,u}$, $\Sigma_{2,u}$, $\Sigma_{k,u}$. The supremal conditionally controllable sublanguage always exists, cf.~\cite{scl2011} for the case of prefix-closed languages.
  \begin{theorem}\label{existence}
    The supremal conditionally controllable sublanguage of a given language $K$ always exists and is equal to the union of all conditionally controllable sublanguages of the language $K$.
  \end{theorem}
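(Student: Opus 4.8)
The plan is to follow the classical template for supremal sublanguages: first argue that the family of conditionally controllable sublanguages of $K$ is nonempty, then show that the class of conditionally controllable languages is closed under arbitrary unions, and finally conclude that the union of all such sublanguages is itself conditionally controllable and hence supremal. Nonemptiness is immediate, since the empty language is conditionally controllable (all three conditions of Definition~\ref{def:conditionalcontrollability} hold vacuously), so the union in question is a well-defined sublanguage of $K$.

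The crux is closure under union. Let $(K_j)_{j\in J}$ be a family of conditionally controllable sublanguages and put $M=\bigcup_{j\in J}K_j$. Since natural projections and the prefix closure both commute with arbitrary unions, we have $P_k(M)=\bigcup_j P_k(K_j)$, $\overline{P_k(M)}=\bigcup_j\overline{P_k(K_j)}$, and likewise for $P_{1+k}$ and $P_{2+k}$. Condition~(\ref{cc1}) then follows immediately: each $P_k(K_j)$ is controllable with respect to the \emph{fixed} plant $L(G_k)$ and $\Sigma_{k,u}$, and ordinary controllability with respect to a fixed plant is preserved under union, which is exactly the fact that the supremal controllable sublanguage $\supC$ equals the union of all controllable sublanguages.

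The genuine obstacle is Conditions~(\ref{cc2}) and~(\ref{cc3}), because the relevant plant $L(G_1)\parallel\overline{P_k(M)}$ is itself built from $M$ and therefore grows with the union; controllability with respect to a larger plant is a \emph{stronger} requirement, so the union cannot be handled by the fixed-plant argument alone. I would verify Condition~(\ref{cc2}) wordwise: take $s\in\overline{P_{1+k}(M)}$ and $\sigma\in\Sigma_{1+k,u}$ with $s\sigma\in L(G_1)\parallel\overline{P_k(M)}$, and pick $j$ with $s\in\overline{P_{1+k}(K_j)}$. If $\sigma\notin\Sigma_k$, then $P_k^{1+k}$ erases $\sigma$, so $P_k^{1+k}(s\sigma)=P_k^{1+k}(s)\in\overline{P_k(K_j)}$ and $s\sigma$ already lies in the smaller plant $L(G_1)\parallel\overline{P_k(K_j)}$; controllability of $P_{1+k}(K_j)$ then gives $s\sigma\in\overline{P_{1+k}(K_j)}\subseteq\overline{P_{1+k}(M)}$. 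The interesting case is $\sigma\in\Sigma_k$, hence $\sigma\in\Sigma_{k,u}$: writing $t=P_k^{1+k}(s)\in\overline{P_k(K_j)}$, we have $t\sigma\in\overline{P_k(M)}\subseteq L(G_k)$ (using $M\subseteq K\subseteq L_m(G)$ and that $L(G_k)$ is prefix-closed), so Condition~(\ref{cc1}) applied to the \emph{same} index $j$ forces $t\sigma\in\overline{P_k(K_j)}$. This is the key point that reduces the varying plant back to index $j$: now $s\sigma\in L(G_1)\parallel\overline{P_k(K_j)}$, and controllability of $P_{1+k}(K_j)$ yields $s\sigma\in\overline{P_{1+k}(K_j)}\subseteq\overline{P_{1+k}(M)}$. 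Condition~(\ref{cc3}) is handled symmetrically.

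Having established closure under union, I would conclude by applying it to the family of \emph{all} conditionally controllable sublanguages of $K$: their union $N$ is conditionally controllable by the above, $N\subseteq K$ since each member is, and $N$ contains every conditionally controllable sublanguage by construction, so $N=\supCC(K,L,(\Sigma_{1,u},\Sigma_{2,u},\Sigma_{k,u}))$. The step I expect to require the most care is precisely the shared-event case of Conditions~(\ref{cc2}) and~(\ref{cc3}): one must check that $t\sigma$ indeed lands in $L(G_k)$ so that Condition~(\ref{cc1}) can be invoked to keep the argument within a single index $j$, rather than scattering across the family.
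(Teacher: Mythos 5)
Your proof is correct and takes essentially the same route as the paper's: condition~(1) survives the union because the plant $L(G_k)$ is fixed, and for conditions~(2)--(3) you split on whether the uncontrollable event lies in $\Sigma_k$ and, in the shared-event case, invoke condition~(1) at the \emph{same} index to pull the extended word back into $\overline{P_k(K_j)}$, thereby reducing the union-dependent plant to a single index. The paper presents this as a proof by contradiction over the cross terms of a double union, but the key step is identical to yours.
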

  \begin{proof}
    Let $I$ be an index set, and let $K_i$, for $i\in I$, be conditionally controllable sublanguages of $K\subseteq L(G_1\|G_2\|G_k)$. 
    To prove that the language $P_k(\cup_{i\in I} K_i)$ is controllable with respect to $L(G_k)$ and $\Sigma_{k,u}$, note that
    \begin{align*}
      P_k\left(\cup_{i\in I} \overline{K_i}\right)\Sigma_{k,u} \cap L(G_k)
        &= \cup_{i\in I} \left(P_k(\overline{K_i})\Sigma_{k,u} \cap L(G_k) \right) \\
        &\subseteq \cup_{i\in I} P_k(\overline{K_i}) \\
        &= P_k\left(\cup_{i\in I}\overline{K_i}\right),
    \end{align*}
    where the inclusion is by controllability of the language $P_k(K_i)$ with respect to $L(G_k)$ and $\Sigma_{k,u}$.
    Next, to prove that
    \begin{align*}
      P_{1+k}\left(\cup_{i\in I} \overline{K_i}\right)\Sigma_{1+k,u} 
        &\cap L(G_1)\parallel P_k\left(\cup_{i\in I} \overline{K_i}\right)
        \subseteq P_{1+k}\left(\cup_{i\in I} \overline{K_i}\right),
    \end{align*}
    note that
    \begin{align*}
      P_{1+k}&\left(\cup_{i\in I} \overline{K_i}\right)\Sigma_{1+k,u} \cap L(G_1)\parallel P_k\left(\cup_{i\in I} \overline{K_i}\right)\\
      &= \cup_{i\in I} \left( P_{1+k}(\overline{K_i})\Sigma_{1+k,u}\right) 
        \cap \cup_{i\in I} \left(L(G_1)\parallel P_k(\overline{K_i})\right)\\
      &= \cup_{i\in I}\cup_{j\in I} \left( P_{1+k}(\overline{K_i})\Sigma_{1+k,u} 
        \cap L(G_1)\parallel P_k(\overline{K_j})\right)\,.
    \end{align*}
    Consider two different indexes $i$ and $j$ from $I$ such that
    \begin{align*}
      P_{1+k}(\overline{K_i})\Sigma_{1+k,u} 
      \cap L(G_1)\parallel P_k(\overline{K_j})
      \not\subseteq P_{1+k}\left(\cup_{i\in I} \overline{K_i}\right).
    \end{align*}
    Then there exist a word $x$ in $P_{1+k}(\overline{K_i})$ and an uncontrollable event $u$ in $\Sigma_{1+k,u}$ such that $xu$ belongs to the language $L(G_1)\| P_k(\overline{K_j})$, and $xu$ does not belong to $P_{1+k}\left(\cup_{i\in I} \overline{K_i}\right)$. It follows that
      $P_k(x)$ belongs to $P_k(\overline{K_i})$ and
      $P_k(xu)$ belongs to $P_k(\overline{K_j})$.
    If $P_k(xu)$ belongs to $P_k(\overline{K_i})$, then $xu$ belongs to $L(G_1)\| P_k(\overline{K_i})$, and controllability of the language $P_{1+k}(\overline{K_i})$ with respect to $L(G_1)\| P_k(\overline{K_i})$ implies that $xu$ belongs to $P_{1+k}\left(\cup_{i\in I} \overline{K_i}\right)$; hence, $P_k(xu)$ does not belong to $P_k(\overline{K_i})$. If the event $u$ does not belong to $\Sigma_{k,u}$, then $P_k(xu)=P_k(x)$ belongs to $P_k(\overline{K_i})$, which is not the case. Thus, $u$ belongs to $\Sigma_{k,u}$. As $P_k(\overline{K_i})\cup P_k(\overline{K_j})$ is a subset of $L(G_k)$, we get that $P_k(xu) = P_k(x)u$ belongs to $L(G_k)$. However, controllability of the language $P_k(\overline{K_i})$ with respect to $L(G_k)$ and $\Sigma_{k,u}$ implies that the word $P_k(xu)$ belongs to $P_k(\overline{K_i})$. This is a contradiction.
    
    As the case for the projection $P_{2+k}$ is analogous, the proof is complete.
  \qed\end{proof}  
  
  Still, it is a difficult problem to compute a supremal conditional controllable sublanguage. Consider the setting of Problem~\ref{problem:controlsynthesis} and define the languages
  \begin{equation}\tag{*}\label{eq0}
    \boxed{
    \begin{aligned}
      \supC_k     & =  \supC(P_k(K), L(G_k), \Sigma_{k,u})\\
      \supC_{1+k} & =  \supC(P_{1+k}(K), L(G_1) \parallel \overline{\supC_k}, \Sigma_{1+k,u})\\
      \supC_{2+k} & =  \supC(P_{2+k}(K), L(G_2) \parallel \overline{\supC_k}, \Sigma_{2+k,u})
    \end{aligned}}
  \end{equation}

  Interestingly, the following inclusion always holds.
  \begin{lemma}\label{lemma16}
    Consider the setting of Problem~\ref{problem:controlsynthesis}, and languages defined in (\ref{eq0}). Then the language $P_k(\supC_{i+k})$ is a subset of the language $\supC_k$, for $i=1,2$.
  \end{lemma}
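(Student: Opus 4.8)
The plan is to avoid verifying controllability of the projected language directly and instead to derive the inclusion from two cheap inclusions together with one algebraic identity characterizing the supremal controllable sublanguage. Throughout I would treat the case $i=1$; the case $i=2$ is symmetric (interchange $G_1,\Sigma_1$ with $G_2,\Sigma_2$). I will use repeatedly that $P_k = P^{1+k}_k \circ P_{1+k}$, which holds because $\Sigma_k \subseteq \Sigma_1\cup\Sigma_k \subseteq \Sigma_1\cup\Sigma_2$.

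First I would record the key identity: for any specification $E$, plant $L$, and uncontrollable set, the supremal controllable sublanguage $M=\supC(E,L,\cdot)$ satisfies $M = E \cap \overline{M}$. The inclusion $M\subseteq E\cap\overline{M}$ is immediate. For the reverse, take $t\in E\cap\overline{M}$; then $\{t\}\cup M$ has the same prefix closure $\overline{M}$ (since $t\in\overline{M}$ forces $\overline{\{t\}}\subseteq\overline{M}$), hence it is again a controllable sublanguage of $E$, so by the characterization of $\supC$ as the union of all controllable sublanguages (Section~\ref{sec:preliminaries}) it is contained in $M$, giving $t\in M$. Applied to $\supC_k$ this yields $\supC_k = P_k(K)\cap\overline{\supC_k}$, so the whole lemma reduces to showing that $P_k(\supC_{1+k})$ is contained both in $P_k(K)$ and in $\overline{\supC_k}$.

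The first containment is purely formal: from $\supC_{1+k}\subseteq P_{1+k}(K)$, applying $P^{1+k}_k$ and using $P_k=P^{1+k}_k\circ P_{1+k}$ gives $P_k(\supC_{1+k})\subseteq P_k(K)$. For the second, I would use that the supremal controllable sublanguage lies inside its plant, so $\overline{\supC_{1+k}}\subseteq L(G_1)\parallel\overline{\supC_k}$, a prefix-closed language over $\Sigma_1\cup\Sigma_k$. Every word of this synchronous product projects under $P^{1+k}_k$ into $\overline{\supC_k}$, whence $P_k(\overline{\supC_{1+k}})\subseteq\overline{\supC_k}$ and in particular $P_k(\supC_{1+k})\subseteq\overline{\supC_k}$. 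Combining the two containments with the identity gives $P_k(\supC_{1+k})\subseteq P_k(K)\cap\overline{\supC_k}=\supC_k$.

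The step I expect to carry the real weight is the identity $\supC_k=P_k(K)\cap\overline{\supC_k}$: it is what lets me replace a delicate verification that $P_k(\supC_{1+k})$ is controllable with respect to $L(G_k)$---which would force me to lift uncontrollable shared events $u\in\Sigma_1\cap\Sigma_k\cap\Sigma_u$ back through $G_1$, a genuinely awkward point since $G_k$ alone may enable such a $u$ while $G_1\parallel G_k$ blocks it---by the trivial observation that $P_k(\supC_{1+k})$ is a sublanguage of $P_k(K)$ all of whose words already lie in $\overline{\supC_k}$. The other place demanding care is the implicit convention that $\supC(E,L,\cdot)\subseteq L$; I would make sure $\supC_{1+k}$ is taken as a sublanguage of its plant $L(G_1)\parallel\overline{\supC_k}$, as otherwise the inclusion $P_k(\supC_{1+k})\subseteq\overline{\supC_k}$ can fail in degenerate situations where $\supC_k=\emptyset$.
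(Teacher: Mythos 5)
Your proof is correct and structurally the same as the paper's: both arguments reduce the lemma to the two easy containments $P_k(\supC_{i+k})\subseteq P_k(K)$ and $P_k(\supC_{i+k})\subseteq\overline{\supC_k}$ (the latter coming from $\supC_{i+k}$ being a sublanguage of its plant $L(G_i)\parallel\overline{\supC_k}$) together with the inclusion $P_k(K)\cap\overline{\supC_k}\subseteq\supC_k$. The only difference is how that last inclusion is justified --- the paper verifies directly that $P_k(K)\cap\overline{\supC_k}$ is controllable with respect to $L(G_k)$ and $\Sigma_{k,u}$ (extending $su\in\overline{\supC_k}$ to some $suv\in\supC_k$) and then invokes supremality, whereas you derive it from the general identity $\supC(E,L,\cdot)=E\cap\overline{\supC(E,L,\cdot)}$ proved via the union characterization; both routes are valid, and your explicit caveat about the convention $\supC(E,L,\cdot)\subseteq L$ addresses a point the paper leaves implicit.
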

  \begin{proof}
    By definition, the language $P_k(\supC_{i+k})$ is a subset of languages $\overline{\supC_k}$ and $P_k(K)$. To prove that $P_k(\supC_{i+k})$ is a subset of $\supC_k$, we prove that the language $\overline{\supC_k}\cap P_k(K)$ is a subset of $\supC_k$. To do this, it is sufficient to show that the language $\overline{\supC_k}\cap P_k(K)$ is controllable with respect to $L(G_k)$ and $\Sigma_{k,u}$. 
    
    Thus, consider a word $s$ in $\overline{\overline{\supC_k}\cap P_k(K)}$, an uncontrollable event $u$ in $\Sigma_{k,u}$, and the word $su$ in $L(G_k)$. By controllability of $\supC_k$, the word $su$ belongs to $\overline{\supC_k}$, which is a subset of $\overline{P_k(K)}$. That is, there exists a word $v$ such that $suv$ is in $\supC_k$, which is a subset of $P_k(K)$. This means that the word $suv$ belongs to $\overline{\supC_k}\cap P_k(K)$, which implies that the word $su$ is in $\overline{\overline{\supC_k}\cap P_k(K)}$. This completes the proof.
  \qed\end{proof}

  It turns out that if the converse inclusion also holds, then we immediately obtain the supremal conditionally-controllable sublanguage.
  \begin{theorem}\label{thm2}
    Consider the setting of Problem~\ref{problem:controlsynthesis}, and languages defined in~(\ref{eq0}). If $\supC_k$ is a subset of $P_k(\supC_{i+k})$, for $i=1,2$, then 
    \[
      \supC_{1+k} \parallel \supC_{2+k} = \supCC(K, L, (\Sigma_{1,u}, \Sigma_{2,u}, \Sigma_{k,u}))\,.
    \]
  \end{theorem}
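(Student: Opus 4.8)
The plan is to prove the set equality by double inclusion. Write $M:=\supC_{1+k}\parallel\supC_{2+k}$ and show (a) that $M$ is itself a conditionally controllable sublanguage of $K$, and (b) that every conditionally controllable sublanguage of $K$ is contained in $M$. Since by Theorem~\ref{existence} the supremal conditionally controllable sublanguage equals the union of all conditionally controllable sublanguages of $K$, part (a) yields $M\subseteq\supCC(K,L,(\Sigma_{1,u},\Sigma_{2,u},\Sigma_{k,u}))$ and part (b) yields the reverse inclusion, whence equality.

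The first step, and the only place where the hypothesis $\supC_k\subseteq P_k(\supC_{i+k})$ is used, is to compute the projections of $M$. Combining that hypothesis with Lemma~\ref{lemma16} gives $P_k(\supC_{1+k})=\supC_k=P_k(\supC_{2+k})$. Because $\Sigma_{1+k}\cap\Sigma_{2+k}=\Sigma_k$, the two factors of $M$ share exactly the alphabet $\Sigma_k$ and agree on their $P_k$-projections, so the standard interchange between projection and synchronous product gives $P_{i+k}(M)=\supC_{i+k}$ for $i=1,2$ together with $P_k(M)=\supC_k$. With these identities in hand, the three items of Definition~\ref{def:conditionalcontrollability} for $M$ reduce verbatim to the defining controllability properties of $\supC_k$, $\supC_{1+k}$, and $\supC_{2+k}$ in~(\ref{eq0}); moreover $M\subseteq P_{1+k}(K)\parallel P_{2+k}(K)=K$ by conditional decomposability of $K$. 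This establishes (a).

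For (b), let $N$ be any conditionally controllable sublanguage of $K$. From the simple observation $N\subseteq P_{i+k}^{-1}(P_{i+k}(N))$ one gets $N\subseteq P_{1+k}(N)\parallel P_{2+k}(N)$, so by monotonicity of the synchronous product it suffices to prove $P_{i+k}(N)\subseteq\supC_{i+k}$ for $i=1,2$. First, $P_k(N)$ is a controllable sublanguage of $P_k(K)$ with respect to $L(G_k)$ and $\Sigma_{k,u}$, hence $P_k(N)\subseteq\supC_k$ and $\overline{P_k(N)}\subseteq\overline{\supC_k}$. Fix $i=1$; by definition of $\supC_{1+k}$ and $P_{1+k}(N)\subseteq P_{1+k}(K)$ it is enough to show that $P_{1+k}(N)$ is controllable with respect to the larger plant $L(G_1)\parallel\overline{\supC_k}$ and $\Sigma_{1+k,u}$. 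So take $s\in\overline{P_{1+k}(N)}$, $u\in\Sigma_{1+k,u}$ with $su\in L(G_1)\parallel\overline{\supC_k}$, and note $P_k(s)\in\overline{P_k(N)}$. I argue in both cases $u\notin\Sigma_k$ and $u\in\Sigma_k$ that $P_k(su)\in\overline{P_k(N)}$, where in the latter case $u\in\Sigma_{k,u}$ and controllability of $P_k(N)$ with respect to $L(G_k)$ is invoked. This places $su$ in the smaller plant $L(G_1)\parallel\overline{P_k(N)}$, and controllability of $P_{1+k}(N)$ there forces $su\in\overline{P_{1+k}(N)}$; the case $i=2$ is symmetric.

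The main obstacle is the controllability upgrade in step (b): the hypotheses only give controllability of $P_{1+k}(N)$ against the smaller plant $L(G_1)\parallel\overline{P_k(N)}$, whereas membership in $\supC_{1+k}$ demands controllability against the larger plant $L(G_1)\parallel\overline{\supC_k}$. The observation that makes this go through is that an uncontrollable continuation admitted by the larger plant either leaves $P_k(s)$ unchanged (when $u\notin\Sigma_k$) or extends it by an uncontrollable shared event, in which case controllability of $P_k(N)$ with respect to $L(G_k)$ pulls the extension back into $\overline{P_k(N)}$; either way the test reduces to controllability against the smaller plant. It is worth noting that the theorem's hypothesis enters only in direction (a); the supremality inclusion (b) holds without it.
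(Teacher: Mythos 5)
Your proof is correct and follows essentially the same route as the paper: the same two inclusions, the same use of the hypothesis only to establish $P_k(M)=\supC_k$ and $P_{i+k}(M)=\supC_{i+k}$ via Lemma~\ref{lemma16} and Lemma~\ref{lemma:Wonham}, and the same reduction of the reverse inclusion to $P_{i+k}(N)\subseteq\supC_{i+k}$. The only difference is cosmetic: where you verify the controllability upgrade from the plant $L(G_1)\parallel\overline{P_k(N)}$ to $L(G_1)\parallel\overline{\supC_k}$ by a direct word-level case analysis on whether $u\in\Sigma_k$, the paper obtains the same step by combining Lemma~\ref{feng} with the transitivity of controllability (Lemma~\ref{lem_trans}).
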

  \begin{proof}
    Let $\supCC = \supCC(K, L, (\Sigma_{1,u}, \Sigma_{2,u}, \Sigma_{k,u}))$ and $M = \supC_{1+k} \parallel \supC_{2+k}$.
    To prove that $M$ is a subset of $\supCC$, we show that (i) $M$ is a subset of $K$ and (ii) $M$ is conditionally controllable with respect to generators $G_1$, $G_2$, $G_k$ and uncontrollable event sets $\Sigma_{1,u}$, $\Sigma_{2,u}$, $\Sigma_{k,u}$. To this aim, notice that $M$ is a subset of $P_{1+k}(K) \parallel P_{2+k}(K) = K$, because $K$ is conditionally decomposable. Moreover, by Lemmas~\ref{lemma:Wonham} and~\ref{lemma16}, the language $P_k(M) = P_k(\supC_{1+k}) \cap P_k(\supC_{2+k})=\supC_k$, which is controllable with respect to $L(G_k)$ and $\Sigma_{k,u}$. Similarly, $P_{i+k}(M) = \supC_{i+k} \parallel P_k(\supC_{j+k}) = \supC_{i+k} \parallel \supC_{k} = \supC_{i+k}$, for $j\neq i$, which is controllable with respect to $L(G_i)\parallel \overline{P_k(M)}$. Hence, $M$ is a subset of $\supCC$.

    To prove the opposite inclusion, it is sufficient, by Lemma~\ref{lem11}, to show that the language $P_{i+k}(\supCC)$ is a subset of $\supC_{i+k}$, for $i=1,2$. To prove this note that the language $P_{1+k}(\supCC)$ is controllable with respect to $L(G_1)\parallel \overline{P_k(\supCC)}$ and $\Sigma_{1+k,u}$, and the language $L(G_1) \parallel \overline{P_k(\supCC)}$ is controllable with respect to $L(G_1) \parallel \overline{\supC_k}$ and $\Sigma_{1+k,u}$ by Lemma~\ref{feng}, because the language $P_k(\supCC)$ being controllable with respect to $L(G_k)$ implies that it is also controllable with respect to $\overline{\supC_k}$, which is a subset of $L(G_k)$. By Lemma~\ref{lem_trans}, the language $P_{1+k}(\supCC)$ is controllable with respect to $L(G_1) \parallel \overline{\supC_k}$ and $\Sigma_{1+k,u}$, which implies that $P_{1+k}(\supCC)$ is a subset of $\supC_{1+k}$. The other case is analogous. Hence, the language $\supCC$ is a subset of $M$ and the proof is complete.
  \qed\end{proof}

  \begin{example}
    This example demonstrates that the language $\supC_k$ is not always included in the language $P_k(\supC_{i+k})$. Moreover, it does not hold even if projections are observers or satisfy the LCC property. 
    
    Consider systems $G_1$ and $G_2$ shown in Fig.~\ref{figEx}, and the specification $K$ as shown in Fig.~\ref{figExx}.
    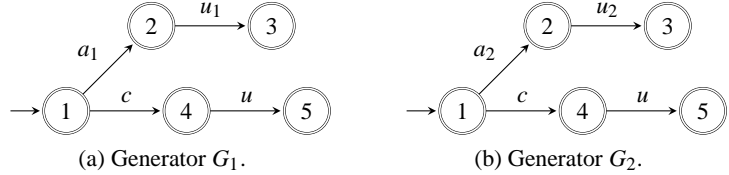
\begin{figure}[htb]
      \centering
      \subfloat[Generator $G_1$.]{
        \begin{tikzpicture}[->,>=stealth,shorten >=1pt,auto,node distance=1.6cm,
          state/.style={circle,minimum size=6mm, very thin,draw=black,initial text=}]
          \node[state,initial,accepting] (1) {1};
          \node[state,accepting] (2) [above right of=1] {2};
          \node[state,accepting] (3) [right of=2] {3};
          \node[state,accepting] (4) [right of=1] {4};
          \node[state,accepting] (5) [right of=4] {5};
          \path
            (1) edge node {$a_1$} (2)
            (2) edge node {$u_1$} (3)
            (1) edge node {$c$} (4)
            (4) edge node {$u$} (5);
        \end{tikzpicture}
      }
      \qquad
      \subfloat[Generator $G_2$.]{
        \begin{tikzpicture}[->,>=stealth,shorten >=1pt,auto,node distance=1.6cm,
          state/.style={circle,minimum size=6mm, very thin,draw=black,initial text=}]
          \node[state,initial,accepting] (1) {1};
          \node[state,accepting] (2) [above right of=1] {2};
          \node[state,accepting] (3) [right of=2] {3};
          \node[state,accepting] (4) [right of=1] {4};
          \node[state,accepting] (5) [right of=4] {5};
          \path
            (1) edge node {$a_2$} (2)
            (2) edge node {$u_2$} (3)
            (1) edge node {$c$} (4)
            (4) edge node {$u$} (5);
        \end{tikzpicture}
      }
      \caption{Generators $G_1$ and $G_2$.}
      \label{figEx}
    \end{figure}
    \begin{figure}[htb]
      \centering
      \begin{tikzpicture}[->,>=stealth,shorten >=1pt,auto,node distance=1.6cm,
        state/.style={circle,minimum size=6mm, very thin,draw=black,initial text=}]
        \node[state,initial,accepting] (1) {1};
        \node[state,accepting] (2) [above right of=1] {2};
        \node[state,accepting] (3) [right of=2] {3};
        \node[state,accepting] (4) [right of=3] {4};
        \node[state,accepting] (5) [right of=1] {5};
        \node[state,accepting] (6) [right of=5] {6};
        \node[state,accepting] (7) [right of=6] {7};
        \path
          (1) edge node {$a_1$} (2)
          (2) edge node {$a_2$} (3)
          (3) edge node {$u_2$} (4)
          (1) edge node {$a_2$} (5)
          (5) edge node {$a_1$} (6)
          (6) edge node {$u_1$} (7);
      \end{tikzpicture}
      \caption{Specification $K$.}
      \label{figExx}
    \end{figure}
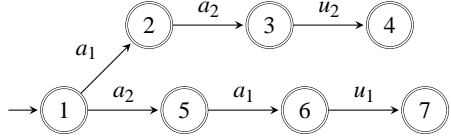
    Controllable events are $\Sigma_c=\{a_1,a_2,c\}$, and coordinator events are $\Sigma_k=\{a_1,a_2,c,u\}$. Construct the coordinator $G_k=P_k(G_1)\parallel P_k(G_2)$. It can be verified that $K$ is conditionally decomposable, $\supC_k=\overline{\{a_1a_2,a_2a_1\}}$, $\supC_{1+k}=\overline{\{a_2a_1u_1\}}$, and $\supC_{2+k}=\overline{\{a_1a_2u_2\}}$. Hence, $\supC_k$ is not a subset of $P_k(\supC_{i+k})$.
    
    It can be verified that projections $P_k$, $P_{1+k}$, $P_{2+k}$ are $L(G_1\|G_2)$-observers and LCC for the language $L(G_1\|G_2)$.
  \hfill$\triangleleft$
  \end{example}

  Recall that it is still open how to compute the supremal conditionally-controllable sublanguage for a general, non-prefix-closed language. Consider the example above and note that the words $a_1a_2$ and $a_2a_1$ from $\supC_k$ do not appear in the projection of the supremal conditionally-controllable sublanguage, that is, no words with both letters $a_1$ and $a_2$ appear in the supremal conditionally-controllable sublanguage. Thus, we can remove these words from $\supC_k$ (basically from the coordinator) and recompute the supremal controllable sublanguage (denoted by $\supC_k'$), that is, 
  \[
    \supC_k'=\supC(\cap_{i=1,2}P_k(\supC_{i+k}),L(G_k),\Sigma_{k,u})=\{\eps\}
  \]
  and, similarly, recompute $\supC_{i+k}$ using $\supC_k'$ instead of $\supC_k$. Note that the plant is changed because the coordinator restricts it more than before. An application of Theorem~\ref{thm2} could thus be as follows. If $\supC_k\not\subseteq P_k(\supC_{i+k})$, then the natural approach seems to be to remove from $\supC_k$ all words violating the inclusion, and to recompute $\supC_{i+k}$, for $i=1,2$, with respect to this new $\supC_k'$, that is
  \begin{equation}\tag{**}\label{eq1}
    \boxed{
    \begin{aligned}
      \supC_k'     & =  \supC(P_k(\supC_{1+k})\cap P_k(\supC_{2+k}), L(G_k), \Sigma_{k,u})\\
      \supC_{1+k}' & =  \supC(\supC_{1+k}, L(G_1) \parallel \overline{\supC_k'}, \Sigma_{1+k,u})\\
      \supC_{2+k}' & =  \supC(\supC_{2+k}, L(G_2) \parallel \overline{\supC_k'}, \Sigma_{2+k,u})
    \end{aligned}}
  \end{equation}
  In our example, we get that $\supC_{1+k}'=\{\eps\}$ and $\supC_{2+k}'=\{\eps\}$ satisfy the assumption that $\supC_k'\subseteq P_k(\supC_{i+k}')$, for $i=1,2$, hence Theorem~\ref{thm2} applies. It is not yet clear whether this method can be used in general, namely whether it always terminates and the result is the supremal conditionally-controllable sublanguage. It is only known that if it terminates, the result is conditionally controllable (see the end of Section~\ref{sec:nonblocking} for more discussion). Another problem is that it requires to compute the projection, which can be exponential in general, because the observer property is not ensured. One of the natural investigations of this problem is to work with nondeterministic representations. Several attempts in this direction were done in the literature although they usually handle the case where only the plant is nondeterministic, while the specification is deterministic, see, e.g., \cite{su:2010,Su:2012}. Even more, it is a question how to test the inclusion from Theorem~\ref{thm2}.
  
  Finally, if $\supC_{i+k}$ and $\supC_k'$ are nonconflicting, the language $\supC_{i+k}\|\supC_k'$ is controllable with respect to $L(G_i)\|\overline{\supC_k}\|\overline{\supC_k'}=L(G_i)\|\overline{\supC_k'}$ by Lemma~\ref{feng}. This observation gives the following result for prefix-closed languages.
  
  \begin{lemma}\label{lem00}
    Let $K=\overline{K}\subseteq L=L(G_1\| G_2\| G_k)$, where $G_i$ is a generator over $\Sigma_i$, for $i=1,2,k$. Assume that $K$ is conditionally decomposable, and define the languages $\supC_k$, $\supC_{1+k}$ and $\supC_{2+k}$ as in (\ref{eq0}). If $\supC_k\not\subseteq P_k(\supC_{i+k})$, for $i\in\{1,2\}$, define the language $\supC_k'$ as in (\ref{eq1}). Then the language
    \[
      \supC_{1+k} \parallel \supC_{2+k} \parallel \supC_k'
    \]
    is conditionally controllable with respect to $G_1,G_2,G_k$ and $\Sigma_{1,u},\Sigma_{2,u},\Sigma_{k,u}$.
  \end{lemma}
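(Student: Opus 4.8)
The plan is to verify directly the three items of Definition~\ref{def:conditionalcontrollability} for the language $M = \supC_{1+k} \parallel \supC_{2+k} \parallel \supC_k'$. The crucial preliminary remark, which I would record once at the outset, is that because $K=\overline{K}$, every language obtained from $K$ by projection, by taking a supremal controllable sublanguage, or by synchronous product is again prefix-closed; in particular $\supC_k$, $\supC_{1+k}$, $\supC_{2+k}$, $\supC_k'$ and $M$ are all prefix-closed. Consequently every prefix-closure occurring in the conditions collapses (so $\overline{P_k(M)}=P_k(M)$, and likewise $\overline{\supC_k'}=\supC_k'$), and any two of these languages are automatically synchronously nonconflicting. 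This last fact is exactly what makes the nonconflictingness hypothesis of the observation stated immediately before the lemma available for free. Note that the hypothesis $\supC_k\not\subseteq P_k(\supC_{i+k})$ is only the trigger for defining $\supC_k'$ and plays no role in the argument below.

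First I would compute the three projections of $M$. Using the distributivity of projections over the synchronous product under $\Sigma_1\cap\Sigma_2\subseteq\Sigma_k$ (Lemma~\ref{lemma:Wonham}), exactly as in the proof of Theorem~\ref{thm2}, together with the identity $P_k(A\parallel B)=P_k(A)\cap B$ valid whenever $B$ is a language over $\Sigma_k$, I obtain
\[
  P_k(M)=P_k(\supC_{1+k})\cap P_k(\supC_{2+k})\cap\supC_k'
  \quad\text{and}\quad
  P_{i+k}(M)=\supC_{i+k}\parallel\bigl(P_k(\supC_{j+k})\cap\supC_k'\bigr),
\]
for $\{i,j\}=\{1,2\}$. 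By the definition of $\supC_k'$ in~(\ref{eq1}) we have $\supC_k'\subseteq P_k(\supC_{1+k})\cap P_k(\supC_{2+k})$, so both expressions simplify to $P_k(M)=\supC_k'$ and $P_{i+k}(M)=\supC_{i+k}\parallel\supC_k'$. I would also record, via Lemma~\ref{lemma16}, that $P_k(\supC_{i+k})\subseteq\supC_k$ and hence $\supC_k'\subseteq\supC_k$; this is what later lets $\overline{\supC_k}\parallel\overline{\supC_k'}$ collapse to $\overline{\supC_k'}$.

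With these normal forms in hand the three controllability conditions follow quickly. Item~\ref{cc1} holds because $P_k(M)=\supC_k'$ is, by its very definition in~(\ref{eq1}), controllable with respect to $L(G_k)$ and $\Sigma_{k,u}$. For items~\ref{cc2} and~\ref{cc3} I invoke the observation preceding the lemma: since $\supC_{i+k}$ is controllable with respect to $L(G_i)\parallel\overline{\supC_k}$ (by~(\ref{eq0})), $\supC_k'$ is trivially controllable with respect to $\overline{\supC_k'}$, and the two are nonconflicting, Lemma~\ref{feng} yields that $\supC_{i+k}\parallel\supC_k'$ is controllable with respect to $L(G_i)\parallel\overline{\supC_k}\parallel\overline{\supC_k'}=L(G_i)\parallel\overline{\supC_k'}$ and $\Sigma_{i+k,u}$. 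Because $P_{i+k}(M)=\supC_{i+k}\parallel\supC_k'$ and $\overline{P_k(M)}=\supC_k'=\overline{\supC_k'}$, this is precisely the required controllability of $P_{i+k}(M)$ with respect to $L(G_i)\parallel\overline{P_k(M)}$, for $i=1,2$. Hence all three items of Definition~\ref{def:conditionalcontrollability} hold and $M$ is conditionally controllable.

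The main obstacle I expect lies entirely in the second paragraph, in justifying that the projections distribute over the synchronous product and that $\supC_k'$ may be pulled out of $P_{i+k}(M)$. One must check that the shared alphabet $(\Sigma_i\cup\Sigma_k)\cap(\Sigma_j\cup\Sigma_k)$ equals $\Sigma_k$ (which uses $\Sigma_1\cap\Sigma_2\subseteq\Sigma_k$), so that the target alphabet of each projection contains the events shared between the two factors; this is exactly the situation in which Lemma~\ref{lemma:Wonham} applies. One then uses $\supC_k'\subseteq P_k(\supC_{j+k})$ to reduce the inner intersection $P_k(\supC_{j+k})\cap\supC_k'$ to $\supC_k'$. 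Everything else---prefix-closedness, nonconflictingness, and the two controllability reductions---is routine once the identities $P_k(M)=\supC_k'$ and $P_{i+k}(M)=\supC_{i+k}\parallel\supC_k'$ are established.
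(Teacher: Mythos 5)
Your proof is correct and follows essentially the same route as the paper, whose entire justification for this lemma is the one-sentence observation preceding it (that $\supC_{i+k}\parallel\supC_k'$ is controllable with respect to $L(G_i)\parallel\overline{\supC_k}\parallel\overline{\supC_k'}=L(G_i)\parallel\overline{\supC_k'}$ by Lemma~\ref{feng}, with nonconflictingness free in the prefix-closed case). You merely make explicit the routine supporting steps the paper leaves implicit, namely the computations $P_k(M)=\supC_k'$ and $P_{i+k}(M)=\supC_{i+k}\parallel\supC_k'$ via Lemma~\ref{lemma:Wonham} and the inclusion $\supC_k'\subseteq\supC_k$ from Lemma~\ref{lemma16}.
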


  Note that if we have any specification $K$, which is conditionally decomposable, then the specification $K\parallel L$ is also conditionally decomposable. The opposite is not true.
  \begin{lemma}
    Let $K$ be conditionally decomposable with respect to event sets $\Sigma_1$, $\Sigma_2$, $\Sigma_k$, and let $L=L_1\parallel L_2\parallel L_k$, where $L_i$ is over $\Sigma_i$, for $i=1,2,k$. Then the language $K\parallel L$ is conditionally decomposable with respect to event sets $\Sigma_1$, $\Sigma_2$, $\Sigma_k$.
  \end{lemma}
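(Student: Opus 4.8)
The plan is to verify the defining identity
\[
  K \parallel L = P_{1+k}(K \parallel L) \parallel P_{2+k}(K \parallel L)
\]
by proving the two inclusions separately; the side condition $\Sigma_1\cap\Sigma_2\subseteq\Sigma_k\subseteq\Sigma_1\cup\Sigma_2$ is inherited unchanged from the conditional decomposability of $K$, since the three event sets are not altered. The inclusion $\subseteq$ is free of hypotheses: exactly as in the first lemma of Section~\ref{sec:preliminaries}, every language $N$ over $\Sigma_1\cup\Sigma_2$ satisfies $N\subseteq P_{i+k}^{-1}(P_{i+k}(N))$ for $i=1,2$, so taking $N=K\parallel L$ gives $K\parallel L\subseteq P_{1+k}(K\parallel L)\parallel P_{2+k}(K\parallel L)$ directly. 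All the content is therefore in the reverse inclusion.

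For $\supseteq$, I would first note that, because $\Sigma_k\subseteq\Sigma_1\cup\Sigma_2$, both $K$ and $L$ are languages over $\Sigma_1\cup\Sigma_2$. Using conditional decomposability of $K$ to write $K=P_{1+k}(K)\parallel P_{2+k}(K)$, I then regroup the full product by associativity and commutativity of $\parallel$:
\[
  K \parallel L
  = \bigl(P_{1+k}(K)\parallel L_1\parallel L_k\bigr)\parallel\bigl(P_{2+k}(K)\parallel L_2\bigr)
  =: M_1 \parallel M_2 \,.
\]
The key structural point is the bookkeeping of alphabets: $M_1$ is a language over $\Sigma_1\cup\Sigma_k$ (each of $P_{1+k}(K)$, $L_1$, and $L_k$ has its alphabet inside $\Sigma_1\cup\Sigma_k$, using $\Sigma_k\subseteq\Sigma_1\cup\Sigma_k$ to place $L_k$), while $M_2$ is a language over $\Sigma_2\cup\Sigma_k$. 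The factor $L_k$ could equally well be absorbed into $M_2$; either grouping works.

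With this grouping the conclusion is immediate from the definition of the synchronous product. Since $M_1\parallel M_2 = P_{1+k}^{-1}(M_1)\cap P_{2+k}^{-1}(M_2)$, every word $w\in K\parallel L=M_1\parallel M_2$ satisfies $P_{1+k}(w)\in M_1$, whence $P_{1+k}(K\parallel L)\subseteq M_1$; symmetrically $P_{2+k}(K\parallel L)\subseteq M_2$. Monotonicity of $\parallel$ then gives
\[
  P_{1+k}(K\parallel L)\parallel P_{2+k}(K\parallel L) \subseteq M_1\parallel M_2 = K\parallel L \,,
\]
which is the missing inclusion, and combining the two inclusions proves conditional decomposability of $K\parallel L$. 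I do not expect a genuine obstacle here beyond the alphabet bookkeeping of the previous paragraph: the argument never invokes a projection-distributes-over-product lemma, precisely because each $M_i$ already lives in the target alphabet $\Sigma_i\cup\Sigma_k$ of the projection $P_{i+k}$, so the inclusion $P_{i+k}(M_1\parallel M_2)\subseteq M_i$ holds by definition rather than by computation.
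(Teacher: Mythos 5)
Your proof is correct, and its core step --- regrouping $K\parallel L = P_{1+k}(K)\parallel P_{2+k}(K)\parallel L_1\parallel L_2\parallel L_k$ into one factor over $\Sigma_1\cup\Sigma_k$ and one over $\Sigma_2\cup\Sigma_k$ --- is exactly the regrouping the paper performs. The difference is in how you close. The paper pushes its factors one step further, using Lemma~\ref{lemma:Wonham} to rewrite $P_{1+k}(K)\parallel L_1\parallel L_k$ as $P_{1+k}(K\parallel L_1\parallel L_k)$, and then simply cites Lemma~\ref{CDlemma}, which states that admitting \emph{any} decomposition into an $(\Sigma_1\cup\Sigma_k)^*$-factor and a $(\Sigma_2\cup\Sigma_k)^*$-factor is equivalent to conditional decomposability. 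You instead prove that implication from first principles: from $M_1\parallel M_2=P_{1+k}^{-1}(M_1)\cap P_{2+k}^{-1}(M_2)$ you deduce $P_{i+k}(M_1\parallel M_2)\subseteq M_i$, hence by monotonicity $P_{1+k}(K\parallel L)\parallel P_{2+k}(K\parallel L)\subseteq M_1\parallel M_2=K\parallel L$, while the opposite inclusion is generic. In effect you reprove the ``if'' direction of Lemma~\ref{CDlemma} inline, so your argument is more self-contained --- it needs neither of the two cited lemmas --- at the cost of a few extra lines; the paper's version is shorter only because it leans on its appendix. One cosmetic difference: you place $L_k$ in $M_1$ only, whereas the paper duplicates it into both factors; both choices are valid (the argument needs only that each factor's alphabet sits inside the corresponding $\Sigma_i\cup\Sigma_k$, and $L_k\parallel L_k=L_k$), as you note yourself.
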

  \begin{proof}
    By the assumption we have that $K=P_{1+k}(K) \| P_{2+k}(K)$. Then 
    \begin{align*}
      K\| L &= P_{1+k}(K) \| P_{2+k}(K) \| L_1 \| L_2 \| L_k\\
            &= P_{1+k}(K) \| L_1 \| L_k \parallel P_{2+k}(K) \| L_2 \| L_k\\
            &= P_{1+k}(K \| L_1 \| L_k) \parallel P_{2+k}(K \| L_2 \| L_k)
    \end{align*}
    where the last equality is by Lemma~\ref{lemma:Wonham}.
    By Lemma~\ref{CDlemma}, $K\|L$ is conditionally decomposable with respect to event sets $\Sigma_1$, $\Sigma_2$, and $\Sigma_k$.
    \qed
  \end{proof}

  \begin{example}
    \begin{figure}[htb]
      \centering
      \begin{tikzpicture}
        \filldraw [gray]  (-2,-.5) circle (2pt) 
                          (2,.5) circle (2pt) 
                          (-2,.5) circle (2pt) 
                          (2,-.5) circle (2pt);
        \clip (-3.5,-1) rectangle (3.5,1);
        \draw (-2,-1) rectangle (2,1);
        \draw [<-,>=latex,auto,name path=curve 1] (-2,-.5) .. controls (1,-.5) and (-1,.5) .. (2,.5);
        \draw [->,>=latex,auto,name path=curve 2] (-2,.5) .. controls (1,.5) and (-1,-.5) .. (2,-.5);
        \fill [name intersections={of=curve 1 and curve 2, name=i, total=\t}]
              [red, opacity=0.5, every node/.style={above left, black, opacity=1}]
        \foreach \s in {1,...,\t}{(i-\s) circle (2pt)};
        \draw [<-,>=latex,auto] (-3,-.5) -- (-2,-.5);
        \draw [->,>=latex,auto] (-3,.5) -- (-2,.5);
        \draw (-3,.7) node {$S_1$};
        \draw [<-,>=latex,auto] (3,-.5) -- (2,-.5);
        \draw [->,>=latex,auto] (3,.5) -- (2,.5);
        \draw (3,.7) node {$S_2$};
        \draw (-2.2,.7) node {$x_1$};
        \draw (2.2,.7) node {$x_3$};
        \draw (-2.2,-.7) node {$x_4$};
        \draw (2.2,-.7) node {$x_2$};
      \end{tikzpicture}
      \caption{A railway crossroad}\label{figA1}
    \end{figure}
    Consider a situation at a railway station. There are several tracks that cross each other at some points. Obviously, the traffic has to be controlled at those points. For simplicity, we consider only two one-way tracks that cross at some point, that is, trains going from west to east use track one, while trains going from east to west use track two. The traffic is controlled by traffic lights. 
    
    Thus, consider the railway crossroad with two traffic lights, $S_1$ and $S_2$, and two entry points $x_1,x_3$ and two exit points $x_2,x_4$, as depicted in Fig.~\ref{figA1}. Each traffic light has values $g_i$ (green) and $r_i$ (red), for $i=1,2$. Colors of the traffic lights are controllable. The plant is then given as a parallel composition of two systems $G_1$ and $G_2$ depicted in Fig.~\ref{figGens}. For safety reasons, each system is able to set the traffic light to red at any moment. It can set the traffic light to green and the trains are detected entering ($x_1$ or $x_3$) and leaving ($x_2$ or $x_4$) the crossroad.
    \begin{figure}[htb]
      \centering
      \begin{tikzpicture}[->,>=stealth,shorten >=1pt,auto,node distance=1.4cm,
        state/.style={circle,minimum size=6mm,very thin,draw=black,initial text=}]
        \node[state,initial,accepting] (1) {1};
        \node[state] (2) [right of=1] {2};
        \path
          (1) edge[loop above] node {$r_1,x_2$} (1)
          (1) edge[bend left] node {$g_1$} (2)
          (2) edge[bend left] node {$r_1$} (1)
          (2) edge[loop above] node {$x_1,x_2$} (2);
      \end{tikzpicture}\hspace{2cm}
      \begin{tikzpicture}[->,>=stealth,shorten >=1pt,auto,node distance=1.4cm,
        state/.style={circle,minimum size=6mm,very thin,draw=black,initial text=}]
        \node[state,initial,accepting] (1) {1};
        \node[state] (2) [right of=1] {2};
        \path
          (1) edge[loop above] node {$r_2,x_4$} (1)
          (1) edge[bend left] node {$g_2$} (2)
          (2) edge[bend left] node {$r_2$} (1)
          (2) edge[loop above] node {$x_3,x_4$} (2);
      \end{tikzpicture}
      \caption{Generators $G_1$ and $G_2$}\label{figGens}
    \end{figure}
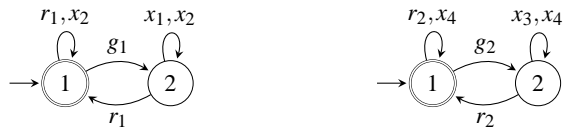
    
    To define the specification, it is natural that a train is allowed to enter the crossroad only if its traffic light is green. The purpose of the entry and exit points $x_i$, $i=1,2,3,4$, is to allow a limited number of trains in the crossroad area from the direction of the green light. The light can turn red at any moment, but the other traffic light can be set to green only if all the trains have left the crossroad area. In this example, we consider the case where at most three trains are allowed to enter the crossroad area on one green light. For this purpose, the entry points must also be controllable to protect another train to enter. This part of the specification is modeled by buffers depicted in Fig.~\ref{figBufs}.
    \begin{figure}[htb]
      \centering
      \begin{tikzpicture}[->,>=stealth,shorten >=1pt,auto,node distance=1.4cm,
        state/.style={circle,minimum size=6mm,very thin,draw=black,initial text=}]
        \node[state,initial,accepting] (1) {1};
        \node[state,accepting] (2) [right of=1] {2};
        \node[state,accepting] (3) [right of=2] {3};
        \node[state,accepting] (4) [right of=3] {4};
        \path
          (1) edge[loop above] node {$r_1,g_2,x_2$} (1)
          (1) edge[bend left] node {$x_1$} (2)
          (2) edge[bend left] node {$x_2$} (1)
          (2) edge[bend left] node {$x_1$} (3)
          (3) edge[bend left] node {$x_2$} (2)
          (3) edge[bend left] node {$x_1$} (4)
          (4) edge[bend left] node {$x_2$} (3)
          (2) edge[loop above] node {$r_1$} (2)
          (3) edge[loop above] node {$r_1$} (3)
          (4) edge[loop above] node {$r_1$} (4);
      \end{tikzpicture}\hspace{1cm}
      \begin{tikzpicture}[->,>=stealth,shorten >=1pt,auto,node distance=1.4cm,
        state/.style={circle,minimum size=6mm,very thin,draw=black,initial text=}]
        \node[state,initial,accepting] (1) {1};
        \node[state,accepting] (2) [right of=1] {2};
        \node[state,accepting] (3) [right of=2] {3};
        \node[state,accepting] (4) [right of=3] {4};
        \path
          (1) edge[loop above] node {$r_2,g_1,x_4$} (1)
          (1) edge[bend left] node {$x_3$} (2)
          (2) edge[bend left] node {$x_4$} (1)
          (2) edge[bend left] node {$x_3$} (3)
          (3) edge[bend left] node {$x_4$} (2)
          (3) edge[bend left] node {$x_3$} (4)
          (4) edge[bend left] node {$x_4$} (3)
          (2) edge[loop above] node {$r_2$} (2)
          (3) edge[loop above] node {$r_2$} (3)
          (4) edge[loop above] node {$r_2$} (4);
      \end{tikzpicture}
      \caption{The two buffers}\label{figBufs}
    \end{figure}
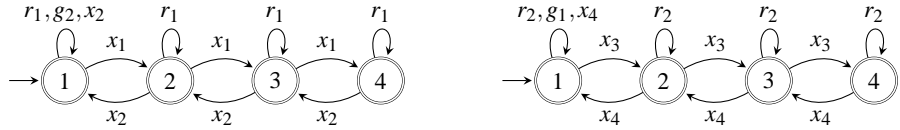
    Another part of the specification governs the behavior of the traffic lights. First, both lights must be red before one of the traffic lights is set to green, stay green for a while, and then must be set to red again. The traffic lights should take turns, so that no trains are waiting for ever, see Fig.~\ref{figSpec1}. For simplicity, we do not model the mechanism (such as a clock) that sets the traffic lights to green for a specific amount of time units. The overall specification is then depicted in Fig.~\ref{figSpec}.
    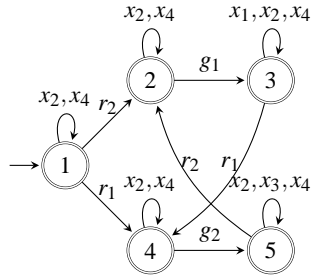
\begin{figure}[htb]
      \centering
      \begin{tikzpicture}[->,>=stealth,shorten >=1pt,auto,node distance=1.6cm,
        state/.style={circle,minimum size=6mm,very thin,draw=black,initial text=}]
        \node[state,initial,accepting] (1) {1};
        \node[state,accepting] (2) [above right of=1] {2};
        \node[state,accepting] (3) [right of=2] {3};
        \node[state,accepting] (4) [below right of=1] {4};
        \node[state,accepting] (5) [right of=4] {5};
        \path
          (1) edge node[above] {$r_2$} (2)
          (1) edge node[above] {$r_1$} (4)
          (2) edge node {$g_1$} (3)
          (4) edge node {$g_2$} (5)
          (3) edge[bend left=20] node[above] {$r_1$} (4)
          (5) edge[bend left=20] node[above] {$r_2$} (2)
          (3) edge[loop above] node {$x_1,x_2,x_4$} (3)
          (5) edge[loop above] node {$x_2,x_3,x_4$} (5)
          (1) edge[loop above] node {$x_2,x_4$} (1)
          (2) edge[loop above] node {$x_2,x_4$} (2)
          (4) edge[loop above] node {$x_2,x_4$} (4);
      \end{tikzpicture}
      \caption{The traffic lights' part of the specification}\label{figSpec1}
    \end{figure}
    \begin{figure}[htb]
      \centering
      \begin{tikzpicture}[->,>=stealth,shorten >=1pt,auto,node distance=1.8cm,
        state/.style={circle,minimum size=6mm,very thin,draw=black,initial text=}]
        \node[state,initial,accepting] (1) {1};
        \node[state,accepting] (2) [above right of=1] {2};
        \node[state,accepting] (3) [above right of=2] {3};
        \node[state,accepting] (4) [below right of=1] {4};
        \node[state,accepting] (5) [below right of=4] {5};
        \path
          (1) edge node[above] {$r_2$} (2)
          (1) edge[loop above] node {$x_2,x_4$} (1)
          (1) edge node {$r_1$} (4)
          (2) edge node[above] {$g_1$} (3)
          (2) edge[loop above] node {$x_2,x_4$} (2)
          (4) edge node {$g_2$} (5)
          (4) edge[loop below] node {$x_2,x_4$} (4)
          (3) edge[bend left=20] node {$r_1$} (4)
          (5) edge[bend right=20] node[right] {$r_2$} (2)
          (3) edge[loop above] node {$x_2,x_4$} (3)
          (5) edge[loop below] node {$x_2,x_4$} (5);
          
        \node[state,accepting] (6) [right of=3] {6};
        \node[state,accepting] (7) [right of=6] {7};
        \node[state,accepting] (8) [right of=7] {8};
        \path
          (3) edge[bend left] node {$x_1$} (6)
          (6) edge[bend left] node {$x_2$} (3)
          (6) edge[bend left] node {$x_1$} (7)
          (7) edge[bend left] node {$x_2$} (6)
          (7) edge[bend left] node {$x_1$} (8)
          (8) edge[bend left] node {$x_2$} (7)
          (6) edge[loop above] node {$x_4$} (6)
          (7) edge[loop above] node {$x_4$} (7)
          (8) edge[loop above] node {$x_4$} (8);
          
        \node[state,accepting] (9)  [right of=5] {9};
        \node[state,accepting] (10) [right of=9] {10};
        \node[state,accepting] (11) [right of=10] {11};
        \path
          (5) edge[bend left] node {$x_3$} (9)
          (9) edge[bend left] node {$x_4$} (5)
          (9) edge[bend left] node {$x_3$} (10)
          (10) edge[bend left] node {$x_4$} (9)
          (10) edge[bend left] node {$x_3$} (11)
          (11) edge[bend left] node {$x_4$} (10)
          (9) edge[loop below] node {$x_2$} (9)
          (10) edge[loop below] node {$x_2$} (10)
          (11) edge[loop below] node {$x_2$} (11);
          
        \node[state,accepting] (12) [below of=6,node distance=1.5cm] {12};
        \node[state,accepting] (13) [below of=7,node distance=1.5cm] {13};
        \node[state,accepting] (14) [below of=8,node distance=1.5cm] {14};
        \path
          (6) edge node {$r_1$} (12)
          (7) edge node {$r_1$} (13)
          (8) edge node {$r_1$} (14)
          (12) edge[loop below] node {$x_4$} (12)
          (13) edge[loop below] node {$x_4$} (13)
          (14) edge[loop below] node {$x_4$} (14)
          (14) edge node {$x_2$} (13)
          (13) edge node {$x_2$} (12)
          (12) edge[bend left=20] node {$x_2$} (4);
          
        \node[state,accepting] (15) [above of=9,node distance=1.5cm] {15};
        \node[state,accepting] (16) [above of=10,node distance=1.5cm] {16};
        \node[state,accepting] (17) [above of=11,node distance=1.5cm] {17};
        \path
          (9) edge node {$r_2$} (15)
          (10) edge node {$r_2$} (16)
          (11) edge node {$r_2$} (17)
          (15) edge[loop above] node {$x_2$} (15)
          (16) edge[loop above] node {$x_2$} (16)
          (17) edge[loop above] node {$x_2$} (17)
          (17) edge node {$x_4$} (16)
          (16) edge node {$x_4$} (15)
          (15) edge[bend right=20] node[right] {$x_4$} (2);
      \end{tikzpicture}
      \caption{The overall specification}\label{figSpec}
    \end{figure}
    The set of uncontrollable events is thus $\Sigma_u = \{x_2, x_4\}$; all other events are controllable.
    
    To make the specification controllable with respect to $\Sigma_1$, $\Sigma_2$, and $\Sigma_k$ (where $\Sigma_k$ is initialized to the empty set), we need to take $\Sigma_k=\{g_1, g_2, r_1\}$. Now we can compute the coordinator as the projection $P_k(G_1)\|P_k(G_2)$, and the languages $\supC_k$, $\supC_{1+k}$ and $\supC_{2+k}$ as defined in~(\ref{eq0}), see Figs.~\ref{figSups1}, \ref{figSups2}, and~\ref{figSups3}. 
    \begin{figure}[htb]
      \centering
      \begin{tikzpicture}[->,>=stealth,shorten >=1pt,auto,node distance=1.6cm,
        state/.style={circle,minimum size=6mm,very thin,draw=black,initial text=}]
        \node[state,initial,accepting] (1) {1};
        \node[state,accepting] (2) [right of=1] {2};
        \node[state,accepting] (3) [above right of=2] {3};
        \node[state] (4) [below right of=3] {4};
        \path
          (1) edge node {$r_1$} (2)
          (1) edge[bend right=40] node {$g_1$} (4)
          (2) edge[bend left] node {$g_2$} (3)
          (3) edge[bend left] node {$g_1$} (4)
          (4) edge node[above] {$r_1$} (2);
      \end{tikzpicture}
      \caption{Supervisor $\supC_k$}\label{figSups1}
    \end{figure}
    It can be verified that $\supC_k\subseteq P_k(\supC_{i+k})$, for $i=1,2$, hence Theorem~\ref{thm2} applies and the result (that is, in the monolithic notation, the language $\supC_{1+k}\|\supC_{2+k}$) is the supremal conditionally-con\-troll\-able sublanguage of the specification, cf. Fig.~\ref{figSupCC}. Note that the difference with the specification is the correct marking of the states. 
    \begin{figure}[htb]
      \centering
      \begin{tikzpicture}[->,>=stealth,shorten >=1pt,auto,node distance=1.6cm,
        state/.style={circle,minimum size=6mm,very thin,draw=black,initial text=}]
        \node[state,initial,accepting] (1) {1};
        \node[state,accepting] (2) [below right of=1] {2};
        \node[state,accepting] (3) [right of=2] {3};
        \node[state] (4) [above right of=3] {4};
        \path
          (1) edge node {$r_1$} (2)
          (1) edge[loop above] node {$x_2$} (1)
          (2) edge[loop below] node {$x_2$} (2)
          (3) edge[loop right] node {$x_2$} (3)
          (1) edge node {$g_1$} (4)
          (2) edge node {$g_2$} (3)
          (4) edge[bend right=10] node[above] {$r_1$} (2)
          (4) edge[loop above] node {$x_2$} (4)
          (3) edge node[above] {$g_1$} (4);
          
        \node[state] (6) [right of=4] {5};
        \node[state] (7) [right of=6] {6};
        \node[state] (8) [right of=7] {7};
        \path
          (4) edge[bend left] node {$x_1$} (6)
          (6) edge[bend left] node {$x_2$} (4)
          (6) edge[bend left] node {$x_1$} (7)
          (7) edge[bend left] node {$x_2$} (6)
          (7) edge[bend left] node {$x_1$} (8)
          (8) edge[bend left] node {$x_2$} (7);
          
        \node[state,accepting] (12) [below of=6,node distance=1.5cm] {8};
        \node[state,accepting] (13) [below of=7,node distance=1.5cm] {9};
        \node[state,accepting] (14) [below of=8,node distance=1.5cm] {10};
        \path
          (6) edge node {$r_1$} (12)
          (7) edge node {$r_1$} (13)
          (8) edge node {$r_1$} (14)
          (14) edge node {$x_2$} (13)
          (13) edge node {$x_2$} (12)
          (12) edge[bend left=20] node {$x_2$} (2);
      \end{tikzpicture}
      \caption{Supervisor $\supC_{1+k}$}\label{figSups2}
    \end{figure}
    \begin{figure}[htb]
      \centering
      \begin{tikzpicture}[->,>=stealth,shorten >=1pt,auto,node distance=1.5cm,
        state/.style={circle,minimum size=6mm,very thin,draw=black,initial text=}]
        \node[state,initial,accepting] (1) {1};
        \node[state,accepting] (2) [below right of=1] {2};
        \node[state] (3) [above right of=2] {3};
        \node[state,accepting] (4) [right of=3] {4};
        \node[state] (5) [right of=4] {5};
        \path
          (1) edge node {$r_2$} (2)
          (1) edge[loop above] node {$x_4$} (1)
          (2) edge[loop below] node {$x_4$} (2)
          (3) edge[loop left] node {$x_4$} (3)
          (4) edge[loop above] node {$x_4$} (4)
          (1) edge[bend left] node {$r_1$} (4)
          (2) edge node {$g_1$} (3)
          (4) edge node {$g_2$} (5)
          (3) edge node {$r_1$} (4)
          (5) edge[bend left=20] node[above] {$r_2$} (2)
          (5) edge[loop above] node {$x_4$} (5);
          
        \node[state] (9)  [right of=5] {6};
        \node[state] (10) [right of=9] {7};
        \node[state] (11) [right of=10] {8};
        \path
          (5) edge[bend left] node {$x_3$} (9)
          (9) edge[bend left] node {$x_4$} (5)
          (9) edge[bend left] node {$x_3$} (10)
          (10) edge[bend left] node {$x_4$} (9)
          (10) edge[bend left] node {$x_3$} (11)
          (11) edge[bend left] node {$x_4$} (10);
          
        \node[state,accepting] (15) [below of=9,node distance=1.5cm] {9};
        \node[state,accepting] (16) [below of=10,node distance=1.5cm] {10};
        \node[state,accepting] (17) [below of=11,node distance=1.5cm] {11};
        \path
          (9) edge node {$r_2$} (15)
          (10) edge node {$r_2$} (16)
          (11) edge node {$r_2$} (17)
          (17) edge node {$x_4$} (16)
          (16) edge node {$x_4$} (15)
          (15) edge[bend left=20] node[above] {$x_4$} (2);
      \end{tikzpicture}
      \caption{Supervisor $\supC_{2+k}$}\label{figSups3}
    \end{figure}
    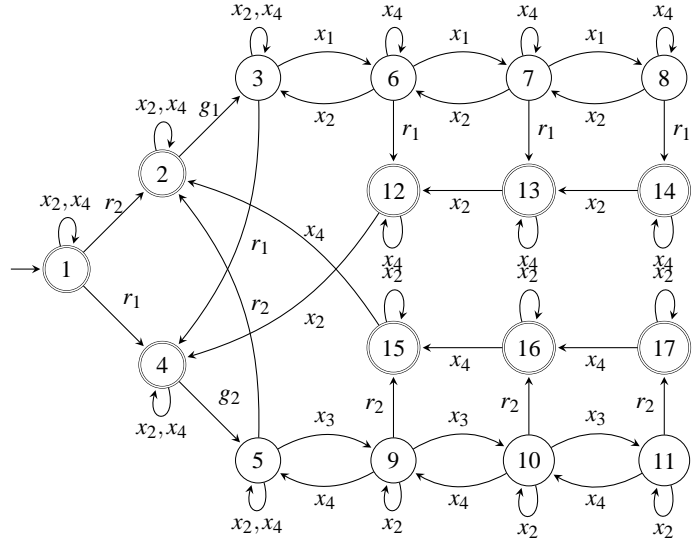
\begin{figure}[htb]
      \centering
      \begin{tikzpicture}[->,>=stealth,shorten >=1pt,auto,node distance=1.8cm,
        state/.style={circle,minimum size=6mm,very thin,draw=black,initial text=}]
        \node[state,initial,accepting] (1) {1};
        \node[state,accepting] (2) [above right of=1] {2};
        \node[state] (3) [above right of=2] {3};
        \node[state,accepting] (4) [below right of=1] {4};
        \node[state] (5) [below right of=4] {5};
        \path
          (1) edge node[above] {$r_2$} (2)
          (1) edge[loop above] node {$x_2,x_4$} (1)
          (1) edge node {$r_1$} (4)
          (2) edge node[above] {$g_1$} (3)
          (2) edge[loop above] node {$x_2,x_4$} (2)
          (4) edge node {$g_2$} (5)
          (4) edge[loop below] node {$x_2,x_4$} (4)
          (3) edge[bend left=20] node {$r_1$} (4)
          (5) edge[bend right=20] node[right] {$r_2$} (2)
          (3) edge[loop above] node {$x_2,x_4$} (3)
          (5) edge[loop below] node {$x_2,x_4$} (5);
          
        \node[state] (6) [right of=3] {6};
        \node[state] (7) [right of=6] {7};
        \node[state] (8) [right of=7] {8};
        \path
          (3) edge[bend left] node {$x_1$} (6)
          (6) edge[bend left] node {$x_2$} (3)
          (6) edge[bend left] node {$x_1$} (7)
          (7) edge[bend left] node {$x_2$} (6)
          (7) edge[bend left] node {$x_1$} (8)
          (8) edge[bend left] node {$x_2$} (7)
          (6) edge[loop above] node {$x_4$} (6)
          (7) edge[loop above] node {$x_4$} (7)
          (8) edge[loop above] node {$x_4$} (8);
          
        \node[state] (9)  [right of=5] {9};
        \node[state] (10) [right of=9] {10};
        \node[state] (11) [right of=10] {11};
        \path
          (5) edge[bend left] node {$x_3$} (9)
          (9) edge[bend left] node {$x_4$} (5)
          (9) edge[bend left] node {$x_3$} (10)
          (10) edge[bend left] node {$x_4$} (9)
          (10) edge[bend left] node {$x_3$} (11)
          (11) edge[bend left] node {$x_4$} (10)
          (9) edge[loop below] node {$x_2$} (9)
          (10) edge[loop below] node {$x_2$} (10)
          (11) edge[loop below] node {$x_2$} (11);
          
        \node[state,accepting] (12) [below of=6,node distance=1.5cm] {12};
        \node[state,accepting] (13) [below of=7,node distance=1.5cm] {13};
        \node[state,accepting] (14) [below of=8,node distance=1.5cm] {14};
        \path
          (6) edge node {$r_1$} (12)
          (7) edge node {$r_1$} (13)
          (8) edge node {$r_1$} (14)
          (12) edge[loop below] node {$x_4$} (12)
          (13) edge[loop below] node {$x_4$} (13)
          (14) edge[loop below] node {$x_4$} (14)
          (14) edge node {$x_2$} (13)
          (13) edge node {$x_2$} (12)
          (12) edge[bend left=20] node {$x_2$} (4);
          
        \node[state,accepting] (15) [above of=9,node distance=1.5cm] {15};
        \node[state,accepting] (16) [above of=10,node distance=1.5cm] {16};
        \node[state,accepting] (17) [above of=11,node distance=1.5cm] {17};
        \path
          (9) edge node {$r_2$} (15)
          (10) edge node {$r_2$} (16)
          (11) edge node {$r_2$} (17)
          (15) edge[loop above] node {$x_2$} (15)
          (16) edge[loop above] node {$x_2$} (16)
          (17) edge[loop above] node {$x_2$} (17)
          (17) edge node {$x_4$} (16)
          (16) edge node {$x_4$} (15)
          (15) edge[bend right=20] node[right] {$x_4$} (2);
      \end{tikzpicture}
      \caption{The supremal conditionally-controllable sublanguage $\supC_{1+k}\|\supC_{2+k}$}\label{figSupCC}
    \end{figure}
  \end{example}

\section{Coordinator for nonblockingness}\label{sec:nonblocking}
   So far, we have only considered a coordinator for safety. In this section, we discuss a coordinator for nonblockingness. To this end, we first prove a fundamental theoretical result and then give an algorithm to construct a coordinator for nonblockingness. 
   
   Recall that a generator $G$ is nonblocking if $\overline{L_m(G)}=L(G)$.
  \begin{theorem}
    Consider languages $L_1$ over $\Sigma_1$ and $L_2$ over $\Sigma_2$, and let the projection $P_0:(\Sigma_1\cup \Sigma_2)^*\to \Sigma_0^*$, with $\Sigma_1\cap \Sigma_2\subseteq \Sigma_0$, be an $L_i$-observer, for $i=1,2$. Let $G_0$ be a nonblocking generator with $L_m(G_0)=P_0(L_1)\| P_0(L_2)$. Then the composed language $L_1\| L_2\| L_m(G_0)$ is nonblocking, that is, $\overline{L_1\| L_2\| L_m(G_0)} = \overline{L_1}\| \overline{L_2}\| \overline{L_m(G_0)}$.
  \end{theorem}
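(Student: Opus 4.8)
The plan is to prove the two inclusions separately. The inclusion $\overline{L_1\| L_2\| L_m(G_0)} \subseteq \overline{L_1}\| \overline{L_2}\| \overline{L_m(G_0)}$ is the routine monotonicity of prefix closure with respect to the synchronous product (a prefix of a synchronized word projects to a prefix in each component), so it holds without using any hypothesis. All the work is in the converse $\supseteq$, which says precisely that every word $w$ of the right-hand side can be completed to a marked word of $L_1\| L_2\| L_m(G_0)$. Write $M = L_m(G_0) = P_0(L_1)\| P_0(L_2)$ and let $P_i$ denote the projection onto $\Sigma_i$, for $i=1,2$. Fixing $w$ with $P_1(w)\in\overline{L_1}$, $P_2(w)\in\overline{L_2}$ and $P_0(w)\in\overline{M}$, I must produce a word $z$ having $w$ as a prefix and satisfying $P_1(z)\in L_1$, $P_2(z)\in L_2$ and $P_0(z)\in M$.

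First I would move to the coordinator level. Since $G_0$ is nonblocking, $\overline{M}=L(G_0)$, so $P_0(w)$ extends to a word $m=P_0(w)\mu\in M$; its $(\Sigma_0\cap\Sigma_1)$- and $(\Sigma_0\cap\Sigma_2)$-projections therefore lie in $P_0(L_1)$ and $P_0(L_2)$. The next step is to lift $m$ into each subsystem using the observer hypothesis. The image of $P_1(w)$ under the projection $\Sigma_1^*\to(\Sigma_0\cap\Sigma_1)^*$ equals the $(\Sigma_0\cap\Sigma_1)$-projection of $P_0(w)$, which is a prefix of the $(\Sigma_0\cap\Sigma_1)$-projection of $m$; since the latter lies in $P_0(L_1)$ and $P_1(w)\in\overline{L_1}$, the $L_1$-observer property yields a word $u_1\in\Sigma_1^*$ with $P_1(w)u_1\in L_1$ whose $(\Sigma_0\cap\Sigma_1)$-projection is exactly that of $m$. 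Symmetrically, the $L_2$-observer property yields $u_2\in\Sigma_2^*$ with $P_2(w)u_2\in L_2$ whose $(\Sigma_0\cap\Sigma_2)$-projection is that of $m$.

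The decisive step is then to weave $u_1$ and $u_2$ into a single suffix $z'$ with $P_1(z')=u_1$ and $P_2(z')=u_2$, and to set $z=wz'$. Two words over $\Sigma_1$ and $\Sigma_2$ admit such a common interleaving precisely when they agree on the shared alphabet $\Sigma_1\cap\Sigma_2$, and this is where the hypothesis $\Sigma_1\cap\Sigma_2\subseteq\Sigma_0$ is essential: both $u_1$ and $u_2$ were tailored to reproduce the projections of the \emph{same} coordinator suffix $\mu$, so projecting each of them further down to $\Sigma_1\cap\Sigma_2$ gives, in both cases, the projection of $\mu$ to $\Sigma_1\cap\Sigma_2$; hence they match and the interleaving $z'$ exists. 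Finally one checks $z=wz'$: by construction $P_1(z)=P_1(w)u_1\in L_1$ and $P_2(z)=P_2(w)u_2\in L_2$, while $P_0(z)$ has $(\Sigma_0\cap\Sigma_1)$- and $(\Sigma_0\cap\Sigma_2)$-projections equal to those of $m\in M$, so $P_0(z)\in P_0(L_1)\| P_0(L_2)=M$. Thus $z\in L_1\| L_2\| M$ extends $w$, giving $w\in\overline{L_1\| L_2\| M}$. I expect the genuine obstacle to be exactly this synchronization step---verifying that the two independently obtained observer-completions are mutually consistent---together with the bookkeeping of the nested projections ($\Sigma_i^*\to(\Sigma_0\cap\Sigma_i)^*$ versus $\Sigma_0^*\to(\Sigma_0\cap\Sigma_i)^*$) needed to make ``agreeing on the coordinator'' precise.
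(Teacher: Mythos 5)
Your proof is correct, but it takes a genuinely different route from the paper's. The paper disposes of the theorem in two lines by invoking Lemma~\ref{fengT41} (the Feng--Wonham nonconflictingness transfer result from~\cite{pcl06}): since $P_0$ is an $L_i$-observer for $i=1,2$, the languages $L_1, L_2, L_0$ are synchronously nonconflicting if and only if their $\Sigma_0$-level images $P_0(L_1), P_0(L_2), L_0$ are, and the latter is immediate because $L_0=P_0(L_1)\parallel P_0(L_2)$ forces both sides of the projected equation to equal $\overline{L_0}$. You instead prove the nontrivial inclusion $\overline{L_1}\parallel\overline{L_2}\parallel\overline{M}\subseteq\overline{L_1\parallel L_2\parallel M}$ from first principles: extend $P_0(w)$ to $m\in M$, lift $m$ into each component via the two observer hypotheses, observe that the two lifted suffixes $u_1,u_2$ agree on $\Sigma_1\cap\Sigma_2$ because $\Sigma_1\cap\Sigma_2\subseteq\Sigma_0$ and both reproduce the projections of the same coordinator suffix $\mu$, and interleave. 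All the steps check out, including the delicate bookkeeping of the nested projections $\Sigma_i^*\to(\Sigma_0\cap\Sigma_i)^*$; in effect you have re-derived the relevant direction of Lemma~\ref{fengT41} in the special case at hand. What the paper's route buys is brevity and reuse of a known modular result; what yours buys is a self-contained argument that makes explicit exactly where each hypothesis ($\Sigma_1\cap\Sigma_2\subseteq\Sigma_0$, the two observer properties, and the specific choice $M=P_0(L_1)\parallel P_0(L_2)$) enters. One cosmetic remark: the nonblockingness of the generator $G_0$ plays no role in your argument (nor, really, in the language-level identity being proved) --- the extension $m$ of $P_0(w)$ exists simply because $P_0(w)\in\overline{M}$, so you need not appeal to $\overline{M}=L(G_0)$ there.
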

  \begin{proof}
    Let $L_0=L_m(G_0)$. By Lemma~\ref{fengT41}, 
    $
      \overline{L_1\| L_2\| L_0} = \overline{L_1} \| \overline{L_2} \| \overline{L_0}
    $
    if and only if 
    \[
      \overline{P_0(L_1) \| P_0(L_2) \| L_0} = \overline{P_0(L_1)} \| \overline{P_0(L_2)} \| \overline{L_0}\,.
    \] 
    However, for our choice of the coordinator, this equality always holds because both sides of the later equation are $\overline{L_0}$.
  \qed\end{proof}

  This result is demonstrated in the following example.
  \begin{example}\label{ex3}
    \begin{figure}[htb]
    \centering
    \begin{tikzpicture}[->,>=stealth,shorten >=1pt,auto,node distance=1.4cm,
      state/.style={circle,minimum size=6mm,very thin,draw=black,initial text=}]
      \node[state,initial,accepting] (1) {1};
      \node[state,accepting] (2) [right of=1] {2};
      \node[state] (3) [right of=2] {3};
      \node[state,accepting] (4) [right of=3] {4};
      \node[] (5) [right of=4] {};
      
      \node[state,initial,accepting] (11) [right of=5] {1};
      \node[state] (12) [above right of=11] {2};
      \node[state,accepting] (13) [below right of=12] {3};
      \node[state] (14) [below right of=11] {4};
      
      \path
      (1) edge node {$a$} (2)
      (2) edge node {$b$} (3)
      (3) edge node {$d$} (4)
      (11) edge[bend left] node {$a$} (12)
      (12) edge[bend left] node {$c$} (13)
      (11) edge[bend right] node {$d$} (14)
      (14) edge[bend right] node {$a$} (13);
    \end{tikzpicture}
    \caption{Generators $G_1$ and $G_2$}\label{fig4}
    \end{figure}
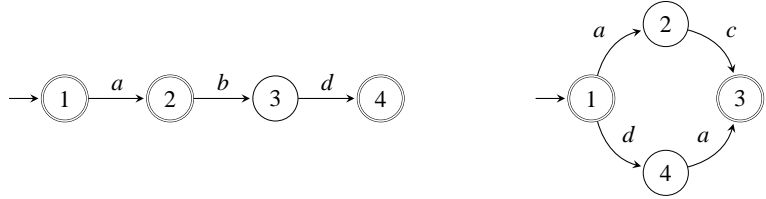
    Consider two nonblocking generators $G_1$ and $G_2$ depicted in Fig.~\ref{fig4}. Their synchronous product is shown in Fig.~\ref{fig5}. One can see that the generator $G_1\|G_2$ is blocking because no marked state is reachable from state 3. It can be verified that the projection $P:\{a,b,c,d\}^*\to \{a,b,d\}^*$ is an $L(G_1)$- and $L(G_2)$-observer. The generator $G_0$ is then a nonblocking (trimmed) part of the synchronous product $P(G_1)\|P(G_2)$ of generators depicted in Fig.~\ref{fig6}, that is $L_m(G_0)=\{a\}$, and the synchronous product of $G_1\|G_2$ with $G_0$ is shown in Fig.~\ref{fig7}. One can see that the result is nonblocking. It is important to notice that event $b$ belongs to the event set of the generator $G_0$.
    \begin{figure}[htb]
    \centering
    \begin{tikzpicture}[->,>=stealth,shorten >=1pt,auto,node distance=1.8cm,
      state/.style={circle,minimum size=6mm,very thin,draw=black,initial text=}]

      \node[state,initial,accepting] (0) {0};
      \node[state] (11) [right of=0] {1};
      \node[state] (12) [above right of=11] {2};
      \node[state] (13) [below right of=12] {3};
      \node[state,accepting] (14) [below right of=11] {4};
      
      \path
      (0)  edge node {$a$} (11)
      (11) edge[bend left] node {$b$} (12)
      (12) edge[bend left] node {$c$} (13)
      (11) edge[bend right] node {$c$} (14)
      (14) edge[bend right] node {$b$} (13);
    \end{tikzpicture}
    \caption{Synchronous product $G_1\|G_2$}\label{fig5}
    \end{figure}
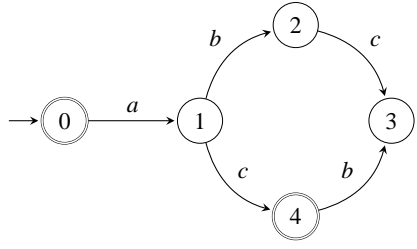
    \begin{figure}[htb]
    \centering
    \begin{tikzpicture}[->,>=stealth,shorten >=1pt,auto,node distance=1.4cm,
      state/.style={circle,minimum size=6mm,very thin,draw=black,initial text=}]
      \node[state,initial,accepting] (1) {1};
      \node[state,accepting] (2) [right of=1] {2};
      \node[state] (3) [right of=2] {3};
      \node[state,accepting] (4) [right of=3] {4};
      \node[] (5) [right of=4] {};
      
      \node[state,initial,accepting] (11) [right of=5] {1};
      \node[state] (14) [right of=11] {4};
      \node[state,accepting] (13) [right of=14] {3};
      
      \path
      (1) edge node {$a$} (2)
      (2) edge node {$b$} (3)
      (3) edge node {$d$} (4)
      (11) edge[bend left=50] node {$a$} (13)
      (11) edge node {$d$} (14)
      (14) edge node {$a$} (13);
    \end{tikzpicture}
    \caption{Generators $P(G_1)$ and $P(G_2)$}\label{fig6}
    \end{figure}
    \begin{figure}[htb]
    \centering
    \begin{tikzpicture}[->,>=stealth,shorten >=1pt,auto,node distance=1.8cm,
      state/.style={circle,minimum size=6mm,very thin,draw=black,initial text=}]

      \node[state,initial,accepting] (0) {0};
      \node[state] (11) [right of=0] {1};
      \node[state,accepting] (13) [right of=11] {2};
      
      \path
      (0)  edge node {$a$} (11)
      (11) edge node {$c$} (13);
    \end{tikzpicture}
    \caption{Synchronous product $G_1\|G_2\|G_0$}\label{fig7}
    \end{figure}
  \end{example}
  
  The previous example shows that even thought the result is nonblocking, it is disputable whether such a coordinator is acceptable. If we assume that event $b$ is uncontrollable, then the coordinator prevents an uncontrollable event from happening and the result depicted in Fig.~\ref{fig7} is not controllable with respect to the plant depicted in Fig.~\ref{fig5}. Although it is not explicitly stated that a coordinator is not allowed to do so, we further discuss this issue and suggest a solution useful in our coordination control framework.

  In general, local supervisors $\supC_{1+k}$ and $\supC_{2+k}$ computed in Section~\ref{sec:procedure} might be blocking. However, we can always choose the language
  \begin{equation}\label{eq2}
    L_C=\supC(P_0(\supC_{1+k})\parallel P_0(\supC_{2+k}),\ \overline{P_0(\supC_{1+k})}\parallel \overline{P_0(\supC_{2+k})},\ \Sigma_{0,u})\,,
  \end{equation}
  where the projection $P_0$ is a $\supC_{i+k}$-observer, for $i=1,2$. The following result shows that the language $\supC_{1+k}\|\supC_{2+k}\|L_C$ is nonblocking and controllable.
  
  \begin{theorem}\label{thm22}
    Consider the notation as defined in Problem~\ref{problem:controlsynthesis}, Algorithm~\ref{algorithm}, (\ref{eq0}), and (\ref{eq2}). Then the language
    \[
      \overline{\supC_{1+k}\parallel \supC_{2+k}\parallel L_C} = \overline{\supC_{1+k}} \parallel \overline{\supC_{2+k}} \parallel \overline{L_C}
    \]
    is controllable with respect to the plant language $L(G_1)\|L(G_2)$.
  \end{theorem}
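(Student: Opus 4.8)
The plan is to establish the two halves of the claim---nonblockingness and controllability---separately, and then combine them.

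First I would prove the nonblockingness part, namely the equality
\[
  \overline{\supC_{1+k}\parallel \supC_{2+k}\parallel L_C} = \overline{\supC_{1+k}} \parallel \overline{\supC_{2+k}} \parallel \overline{L_C}\,,
\]
using the nonblockingness theorem stated just above (the one whose proof invokes Lemma~\ref{fengT41}). The key observation is that $L_C$ in~(\ref{eq2}) was constructed precisely so that the projection $P_0$ is a $\supC_{i+k}$-observer for $i=1,2$, which is the hypothesis required by that theorem. The role of the coordinator language $L_m(G_0)$ in that theorem is played here by $L_C$, and its defining property is that $\overline{L_C}$ coincides with the shared marked behavior $\overline{P_0(\supC_{1+k})}\parallel\overline{P_0(\supC_{2+k})}$ on both sides of the collapsing equation. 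Thus I would verify that the hypotheses of the earlier nonblockingness theorem are met with $L_1=\supC_{1+k}$, $L_2=\supC_{2+k}$, $L_m(G_0)=L_C$, and then invoke it directly. A minor technical point to check is that $L_C$ is indeed $P_0(\supC_{1+k})\parallel P_0(\supC_{2+k})$-closed (or at least that its prefix-closure behaves correctly), since $L_C$ is defined as a supremal controllable sublanguage, not as the full product; here the fact that the plant in~(\ref{eq2}) is prefix-closed should make $L_C$ automatically closed in the required sense.

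Next I would establish controllability with respect to $L(G_1)\parallel L(G_2)$. By Lemma~\ref{feng}, it suffices to show that each factor is controllable with respect to the appropriate plant component and then compose. The languages $\supC_{1+k}$ and $\supC_{2+k}$ are, by their definitions in~(\ref{eq0}), controllable with respect to $L(G_1)\parallel\overline{\supC_k}$ and $L(G_2)\parallel\overline{\supC_k}$ respectively. The language $L_C$ is, by its definition in~(\ref{eq2}), controllable with respect to $\overline{P_0(\supC_{1+k})}\parallel\overline{P_0(\supC_{2+k})}$ and $\Sigma_{0,u}$. The idea is to combine these three controllability facts via Lemma~\ref{feng} to obtain controllability of the full synchronous product with respect to the combined plant, and then use Lemma~\ref{lem_trans} (transitivity of controllability) together with the inclusions $\overline{\supC_k}\subseteq L(G_k)$ and $\overline{P_0(\supC_{i+k})}\subseteq\overline{\supC_{i+k}}$ to transfer controllability down to the actual plant language $L(G_1)\parallel L(G_2)$.

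The hard part will be the controllability step, specifically matching up the event sets and the plant languages so that Lemma~\ref{feng} applies cleanly. The subtlety is that $L_C$ lives over $\Sigma_0$ and its uncontrollable event set is $\Sigma_{0,u}$, whereas $\supC_{1+k}$ and $\supC_{2+k}$ live over $\Sigma_1\cup\Sigma_k$ and $\Sigma_2\cup\Sigma_k$; I must ensure that the uncontrollable events shared across these components are consistently controlled by the relevant factors, so that no factor disables an uncontrollable transition that another factor enables. I expect the nonblockingness established in the first step to be exactly what guarantees the synchronous product does not introduce spurious blocking that would interfere with the controllability argument, so the two parts genuinely support each other. A careful bookkeeping of which factor "owns" each uncontrollable event, combined with the observer property already used for nonblockingness, should resolve this.
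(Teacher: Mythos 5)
Your overall strategy matches the paper's: the nonconflictingness half rests on Lemma~\ref{fengT41} and the controllability half on Lemma~\ref{feng} combined with transitivity (Lemma~\ref{lem_trans}), ending with the observation that $L(G_1)\|L(G_2)\|L(G_k)=L(G_1)\|L(G_2)$ because of how $G_k$ is built in Algorithm~\ref{algorithm}. The controllability half of your sketch is essentially the paper's argument, modulo the type-incorrect inclusion $\overline{P_0(\supC_{i+k})}\subseteq\overline{\supC_{i+k}}$ (the two languages live over different alphabets); what is actually used is $\overline{\supC_{i+k}}\parallel\overline{P_0(\supC_{i+k})}=\overline{\supC_{i+k}}$, i.e.\ the inclusion after inverse projection, which lets the combined plant collapse to $\overline{\supC_{1+k}}\parallel\overline{\supC_{2+k}}$ before the transitivity chain down to $L(G_1)\|L(G_2)$.

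There is, however, a genuine error in the nonblockingness half. You justify the collapse of the projected nonconflictingness equation by asserting that $\overline{L_C}$ ``coincides with'' $\overline{P_0(\supC_{1+k})}\parallel\overline{P_0(\supC_{2+k})}$. That is false in general: $L_C$ is the \emph{supremal controllable sublanguage} of $P_0(\supC_{1+k})\parallel P_0(\supC_{2+k})$ and can be strictly smaller --- in the paper's own follow-up example $L_C=\{\eps\}$ while the product is not. For the same reason the packaged nonblockingness theorem at the start of Section~\ref{sec:nonblocking} cannot be invoked verbatim, since its hypothesis $L_m(G_0)=P_0(L_1)\parallel P_0(L_2)$ fails for $L_C$; this is not the ``minor technical point'' you hope it is. The correct justification, which is what the paper uses, is to go back to Lemma~\ref{fengT41} directly and rely only on the \emph{inclusion} $L_C\subseteq P_0(\supC_{1+k})\parallel P_0(\supC_{2+k})$ together with $P_0(L_C)=L_C$ (as $P_0$ acts as the identity on $\Sigma_0^*$): this already forces both sides of the projected equation to equal $\overline{L_C}$, and the observer hypotheses hold because $P_0$ is a $\supC_{i+k}$-observer by construction and trivially an $L_C$-observer. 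The paper applies Lemma~\ref{fengT41} in two stages --- first to each pair $\supC_{i+k}$, $L_C$, and then to $\supC_{1+k}\|L_C$ and $\supC_{2+k}\|L_C$, using that $P_0$ is then a $\supC_{i+k}\|L_C$-observer --- but a single three-component application would also work. With that repair your argument goes through.
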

  \begin{proof}
    To prove nonblockingness, we use Lemma~\ref{fengT41} in two steps. Namely, it holds that $\overline{\supC_{i+k}\|L_C}=\overline{\supC_{i+k}}\|\overline{L_C}$ if and only if $\overline{P_0(\supC_{i+k})\|L_C}=\overline{P_0(\supC_{i+k})}\|\overline{L_C}$, for $i=1,2$, which always holds because both sides of the later equation are equal to $\overline{L_C}$. Using Lemma~\ref{fengT41} again,
    \begin{align*}
        \overline{\supC_{1+k}\|L_C \parallel \supC_{2+k}\|L_C}
        = \overline{\supC_{1+k}\|L_C} \parallel \overline{\supC_{2+k}\|L_C}
    \end{align*}
    if and only if 
    \begin{align}\label{eq3}
        \overline{P_0(\supC_{1+k}\|L_C) \parallel P_0(\supC_{2+k}\|L_C)}
        = \overline{P_0(\supC_{1+k}\|L_C)} \parallel \overline{P_0(\supC_{2+k}\|L_C)}
    \end{align}
    because if the projection $P_0$ is a $\supC_{i+k}$-observer, for $i=1,2$, and an $L_C$-observer (since it is an identity), then the projection $P_0$ is also an $\supC_{i+k}\| L_C$-observer by~\cite{pcl06}. But (\ref{eq3}) always holds because $P_0(\supC_{i+k}\|L_C)=P_0(\supC_{i+k})\|L_C=L_C$, by Lemma~\ref{lemma:Wonham}, hence both sides are equal to $\overline{L_C}$. Thus, summarized, we have that
    \begin{align*}
      \overline{\supC_{1+k}\|\supC_{2+k}\|L_C} 
        & = \overline{\supC_{1+k}\|L_C \parallel \supC_{2+k}\|L_C}\\
        & = \overline{\supC_{1+k}\|L_C} \parallel \overline{\supC_{2+k}\|L_C}\\
        & = \overline{\supC_{1+k}} \| \overline{\supC_{2+k}} \| \overline{L_C}\,.
    \end{align*}

    To prove controllability, note that 
      $\supC_{i+k}$ is controllable with respect to $\overline{\supC_{i+k}}$, for $i=1,2$, and
      $L_C$ is controllable with respect to $\overline{P_0(\supC_{1+k})} \| \overline{P_0(\supC_{2+k})}$.
    Now we use Lemma~\ref{feng} several times, and the nonconflictness shown above, to obtain that 
    \begin{itemize}
      \item $\supC_{i+k} \parallel L_C$ is controllable with respect to $(\overline{\supC_{i+k}}) \parallel (\overline{P_0(\supC_{1+k})} \| \overline{P_0(\supC_{2+k})})$, for $i=1,2$,
    
      \item $(\supC_{1+k}\|L_C) \parallel (\supC_{2+k}\|L_C) = \supC_{1+k}\|\supC_{2+k}\|L_C$ is controllable with respect to $(\overline{\supC_{1+k}} \| \overline{P_0(\supC_{1+k})} \| \overline{P_0(\supC_{2+k})}) \parallel (\overline{\supC_{2+k}} \| \overline{P_0(\supC_{1+k})} \| \overline{P_0(\supC_{2+k})})$ that can be simplified to $\overline{\supC_{1+k}} \| \overline{\supC_{2+k}}$,
    
      \item $\overline{\supC_{1+k}} \parallel \overline{\supC_{2+k}}$ is controllable with respect to $(L(G_1)\|\overline{\supC_k}) \parallel (L(G_2)\|\overline{\supC_k})=L(G_1)\|L(G_2)\|\overline{\supC_k}$, and
      \item $L(G_1)\|L(G_2)\|\overline{\supC_k}$ is controllable with respect to $L(G_1)\|L(G_2)\|L(G_k)$ because the language $\supC_k$ is controllable with respect to $L(G_k)$.
    \end{itemize}
    Using transitivity of controllability, Lemma~\ref{lem_trans}, we obtain that $\supC_{1+k}\|\supC_{2+k}\|L_C$ is controllable with respect to $L(G_1)\|L(G_2)\|L(G_k)=L(G_1)\|L(G_2)$, because the coordinator $G_k$ is constructed in such a way that it does not change the plant.
  \qed\end{proof}
  
  To demonstrate this improvement, we consider Example~\ref{ex3}.
  \begin{example}
    Consider the generators of Example~\ref{ex3}. Note that $G_1\|G_2\|G_0$, Fig.~\ref{fig7}, is not controllable with respect to the plant $G_1\|G_2$, Fig.~\ref{fig5}, if $b$ is uncontrollable. The generator $G_0=P(G_1)\|P(G_2)$ is depicted in Fig.~\ref{fig10}. 
    \begin{figure}[htb]
    \centering
    \begin{tikzpicture}[->,>=stealth,shorten >=1pt,auto,node distance=1.4cm,
      state/.style={circle,minimum size=6mm,very thin,draw=black,initial text=}]
      \node[state,initial,accepting] (1) {1};
      \node[state,accepting] (2) [right of=1] {2};
      \node[state] (3) [right of=2] {3};
      \path
      (1) edge node {$a$} (2)
      (2) edge node {$b$} (3);
    \end{tikzpicture}
    \caption{Generator $P(G_1)\|P(G_2)$}\label{fig10}
    \end{figure}
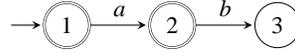
    It is not hard to see that if $b$ is not controllable, then the supremal controllable sublanguage of $L_m(G_0)$ with respect to $L(G_0)$ is $L_C=\{\eps\}$, because event $a$ must be prevent from happening. Therefore, the language of $L(G_1\|G_2)\|L_C=\{\eps\}$ as expected.
  \end{example}

  We can now summarize this method as an algorithm.
  \begin{algorithm}[Coordinator for nonblockingness]
    Consider the notation above.
    \begin{enumerate}
      \item Compute $\supC_{1+k}$ and $\supC_{2+k}$ as defined in (\ref{eq0}).
      \item Let $\Sigma_0:=\Sigma_k$ and $P_0:=P_k$.
      \item Extend the event set $\Sigma_0$ so that the projection $P_0$ is both a $\supC_{1+k}$- and a $\supC_{2+k}$-observer.
      \item Define the coordinator $C$ as the minimal nonblocking generator such that $L_m(C)=\supC(P_0(\supC_{1+k})\parallel P_0(\supC_{2+k}),\ \overline{P_0(\supC_{1+k})}\parallel \overline{P_0(\supC_{2+k})},\ \Sigma_{0,u})$.
    \end{enumerate}
  \end{algorithm}

  This algorithm (Step~1) is based on the computation of the languages $\supC_{1+k}$ and $\supC_{2+k}$ defined in (\ref{eq0}), which can be computed using a standard algorithm for the computation of supremal controllable sublanguages. If the assumption of Theorem~\ref{thm2} is satisfied, the computed languages are the languages of local supervisors that are the candidates to solve the problem. However, the composition $\supC_{1+k} \| \supC_{2+k}$ can be blocking, and a coordinator for nonblockingness is required.
  
  In Step~2, we define a new event set $\Sigma_0$ (and the corresponding projection) that is initialized to be the event set $\Sigma_k$ used in the computation in Step~1. 
  
  In Step~3 of the algorithm, the event set $\Sigma_0$ must be extended so that the projection $P_0$ is both a $\supC_{1+k}$- and $\supC_{2+k}$-observer. Thus, in consequence of the extension operation, $\Sigma_k$ can become a proper subset of $\Sigma_0$. Even though the computation of such a minimal extension is NP-hard, a polynomial algorithm computing a reasonable extension exists, cf. \cite{LFobs} for more details and the algorithm.
  
  Finally, in Step~4, the coordinator generator $C$ is defined as the minimal nonblocking generator accepting the supremal controllable sublanguage of the language $P_0(\supC_{1+k})\parallel P_0(\supC_{2+k})$ with respect to the language $\overline{P_0(\supC_{1+k})} \parallel \overline{P_0(\supC_{2+k})}$. This idea has been used by Feng in~\cite{FLT}. In other words, if $S_1$ and $S_2$ are generators for languages $\supC_{1+k}$ and $\supC_{2+k}$, respectively, then the coordinator $C$ is computed as the generator for the supremal controllable sublanguage of $P_0(S_1)\|P_0(S_2)$. Since $P_0$ is an observer, the computation can be done in polynomial time, cf.~\cite{Won04}.

  \begin{remark}
    In the previous section we discussed the case when $\supC_k\not\subseteq P_k(\supC_{i+k})$, for $i=1,2$. Note that the coordinator $L_C$ discussed in this section can also be used in that case because $\supC_{1+k}\| L_C$ and $\supC_{2+k}\|L_C$ then form synchronously nonconflicting local supervisors such that their overall behavior is controllable with respect to the global plant. Hence, although this solution may not be optimal, it presents a solution in the case of (non-prefix-closed) languages that do not satisfy the assumptions of Theorem~\ref{thm2}, or of those of Section~\ref{sec:revisited} in the case of prefix-closed languages.
  \end{remark}
  
\section{Supremal prefix-closed languages}\label{sec:revisited}
  In this section, we revise the case of prefix-closed languages. We use the local control consistency property (LCC) instead of the output control consistency property (OCC), cf.~\cite{automatica2011}. The reason for this is that LCC is a less restrictive condition than OCC, as shown in~\cite[Lemma 4.4]{SB11}. Moreover, the extension of our approach to an arbitrary number of local plants is sketched.
  
  \begin{theorem}\label{thm2pc}
    Let $K$ be a prefix-closed sublanguage of the plant language $L$, where $L=L(G_1\| G_2\| G_k)$, and $G_i$ is a generator over $\Sigma_i$, for $i=1,2,k$. Assume that the language $K$ is conditionally decomposable, and define the languages $\supC_k$, $\supC_{1+k}$, $\supC_{2+k}$ as in (\ref{eq0}). Let the projection $P^{i+k}_k$ be an $(P^{i+k}_i)^{-1}(L(G_i))$-observer and LCC for the language $(P^{i+k}_i)^{-1}(L(G_i))$, for $i=1,2$. Then 
    \[
      \supC_{1+k} \parallel \supC_{2+k} = \supCC(K, L, (\Sigma_{1,u}, \Sigma_{2,u}, \Sigma_{k,u}))\,.
    \]
  \end{theorem}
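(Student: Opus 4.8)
The plan is to reduce the claim to Theorem~\ref{thm2}, whose hypothesis is exactly the inclusion $\supC_{k}\subseteq P_{k}(\supC_{i+k})$ for $i=1,2$. Since $K$ is prefix-closed, $K=\overline{K}$ is conditionally decomposable and we are in the prefix-closed specialization of the setting of Problem~\ref{problem:controlsynthesis}; moreover, Lemma~\ref{lemma16} already supplies the opposite inclusion $P_{k}(\supC_{i+k})\subseteq\supC_{k}$. Hence it suffices to prove that $\supC_{k}\subseteq P_{k}(\supC_{i+k})$ for $i=1,2$, for then $\supC_{k}=P_{k}(\supC_{i+k})$ and Theorem~\ref{thm2} applies verbatim, yielding $\supC_{1+k}\parallel\supC_{2+k}=\supCC(K,L,(\Sigma_{1,u},\Sigma_{2,u},\Sigma_{k,u}))$.

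Fix $i$ and work over $(\Sigma_{i}\cup\Sigma_{k})^{*}$. First I would rewrite the plant used to define $\supC_{i+k}$ as $L(G_{i})\parallel\overline{\supC_{k}}=(P^{i+k}_{i})^{-1}(L(G_{i}))\cap (P^{i+k}_{k})^{-1}(\overline{\supC_{k}})$, and check, via the identity $P(A\cap P^{-1}(B))=P(A)\cap B$ together with the containment $\overline{\supC_{k}}\subseteq L(G_{k})\subseteq P^{i+k}_{k}((P^{i+k}_{i})^{-1}(L(G_{i})))$, that $P^{i+k}_{k}(L(G_{i})\parallel\overline{\supC_{k}})=\overline{\supC_{k}}$ while $P^{i+k}_{k}(P_{i+k}(K))=P_{k}(K)$. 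Next I would consider the sublanguage $M_{i}=\supC(P_{i+k}(K)\cap (P^{i+k}_{k})^{-1}(\supC_{k}),\,L(G_{i})\parallel\overline{\supC_{k}},\,\Sigma_{i+k,u})$; since its argument is contained in $P_{i+k}(K)$ we have $M_{i}\subseteq\supC_{i+k}$, and the $\Sigma_{k}$-projection of the pre-$\supC$ argument equals $P_{k}(K)\cap\supC_{k}=\supC_{k}$. The crux is then to show that passing to the supremal controllable sublanguage at the $(i+k)$-level does not destroy this projection, that is, $P^{i+k}_{k}(M_{i})=\supC_{k}$; this gives $\supC_{k}=P^{i+k}_{k}(M_{i})\subseteq P^{i+k}_{k}(\supC_{i+k})$, as required.

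The equality $P^{i+k}_{k}(M_{i})=\supC_{k}$ is the main obstacle, and it is precisely where the two structural hypotheses enter. The $(P^{i+k}_{i})^{-1}(L(G_{i}))$-observer property of $P^{i+k}_{k}$ guarantees that no word of $\supC_{k}$ is lost when passing to the controllable part, since any $\Sigma_{k}$-string reachable in $\supC_{k}$ can be completed to a word of the argument of $M_{i}$ with the same projection; local control consistency guarantees that whenever an uncontrollable $\sigma\in\Sigma_{k,u}$ extends such a word inside the plant, the extension is witnessed by an uncontrollable string over $\Sigma_{i}\setminus\Sigma_{k}$ followed by $\sigma$, so that controllability of $\supC_{k}$ with respect to $L(G_{k})$ already certifies controllability of the corresponding $(i+k)$-level behavior. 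This is the supremal-sublanguage analogue of the word-lifting carried out in the proof of Proposition~\ref{prop4}, now relative to the plant $(P^{i+k}_{i})^{-1}(L(G_{i}))$; the additional intersection with $(P^{i+k}_{k})^{-1}(\overline{\supC_{k}})$ constrains only the $\Sigma_{k}$-observable part and therefore leaves the observer and LCC properties of $P^{i+k}_{k}$ effective. Once $P^{i+k}_{k}(M_{i})=\supC_{k}$ is established, combining it with Lemma~\ref{lemma16} gives $\supC_{k}=P_{k}(\supC_{i+k})$ for $i=1,2$, and the proof concludes by invoking Theorem~\ref{thm2}.
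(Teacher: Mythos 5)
Your strategy stands or falls with the intermediate claim that the observer and LCC hypotheses force $\supC_k\subseteq P_k(\supC_{i+k})$, so that Theorem~\ref{thm2} can be invoked. That claim is false, and the paper itself contains the counterexample: the generators of Fig.~\ref{figEx} with the specification of Fig.~\ref{figExx} (all states marked, so $K$ is prefix-closed) satisfy every hypothesis of Theorem~\ref{thm2pc} --- $K$ is conditionally decomposable, and $P^{i+k}_k$ is an $(P^{i+k}_i)^{-1}(L(G_i))$-observer and LCC for that language, the only event of $\Sigma_{i+k}\setminus\Sigma_k$ being the uncontrollable $u_i$ --- and yet $\supC_k=\overline{\{a_1a_2,a_2a_1\}}$ while $P_k(\supC_{1+k})=\overline{\{a_2a_1\}}$. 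The step of yours that breaks is $P^{i+k}_k(M_i)=\supC_k$: the $\supC$ operator at the $(i+k)$-level removes words because of uncontrollable events in $\Sigma_{i,u}\setminus\Sigma_k$ (in the example $u_1$ extends $a_1$ and $a_1a_2$ inside the plant $L(G_1)\parallel\overline{\supC_k}$ but leads outside $P_{1+k}(K)$, so $a_1$ is deleted). Your LCC argument only addresses uncontrollable events $\sigma\in\Sigma_{k,u}$, and controllability of $\supC_k$ with respect to $L(G_k)$ and $\Sigma_{k,u}$ says nothing about events that are invisible to $P_k$; neither the observer property nor LCC prevents this loss.

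The theorem is nevertheless true because its conclusion does not require $\supC_k=P_k(\supC_{i+k})$; in the example both sides of the claimed equality collapse to $\{\eps\}$. Accordingly, the paper's proof does not route through Theorem~\ref{thm2}: it shows directly that $M=\supC_{1+k}\parallel\supC_{2+k}$ is conditionally controllable, the crucial new item being controllability of $P_k(M)=P_k(\supC_{1+k})\cap P_k(\supC_{2+k})$ --- which may be a \emph{proper} sublanguage of $\supC_k$ --- with respect to $L(G_k)$ and $\Sigma_{k,u}$. There LCC is used for a lifting in the spirit of Proposition~\ref{prop4}, but applied to a word $w\in M$: the uncontrollable extension $xa$ is lifted to $P_{i+k}(w)u'a$ with $u'$ uncontrollable, and it is controllability of $\supC_{i+k}$ (not of $P_{i+k}(K)$, and not of $\supC_k$) that keeps the lifted word inside $\supC_{i+k}$. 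The inclusions $\supCC\subseteq M\subseteq K$ are imported from~\cite[Theorem~11]{automatica2011}. To salvage your reduction you would have to assume $\supC_k\subseteq P_k(\supC_{i+k})$ outright, which is exactly the hypothesis of Theorem~\ref{thm2} and is not implied by the hypotheses here.
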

  \begin{proof}
    In this proof, let $\supCC$ denote the supremal conditionally controllable language $\supCC(K, L, (\Sigma_{1,u}, \Sigma_{2,u}, \Sigma_{k,u}))$, and $M$ the parallel composition $\supC_{1+k} \parallel \supC_{2+k}$. It is shown in~\cite[Theorem~11]{automatica2011} that $\supCC$ is a subset of $M$ and that $M$ is a subset of $K$. To prove that $P_k(M) \Sigma_{k,u} \cap L(G_k)$ is a subset of $P_k(M)$, consider a word $x$ in $P_k(M)$ and an uncontrollable event $a$ in $\Sigma_{k,u}$ such that the word $xa$ is in $L(G_k)$. To show that the word $xa$ is in $P_k(M)= P^{1+k}_k(\supC_{1+k}) \cap P^{2+k}_k(\supC_{2+k})$, note that there exists a word $w$ in $M$ such that $P_k(w)=x$. It is shown in~\cite[Theorem~11]{automatica2011} that there exists a word $u$ in $(\Sigma_1\setminus \Sigma_k)^*$ such that the word $P_{1+k}(w)ua$ is in $(P^{1+k}_1)^{-1}(L(G_1))$ and the word $P_{1+k}(w)$ is in $L(G_1) \parallel \supC_k$. As the projection $P^{1+k}_k$ is LCC for the language $(P^{1+k}_1)^{-1}(L(G_1))$, there exists a word $u'$ in $(\Sigma_u\setminus \Sigma_k)^*$ such that $P_{1+k}(w)u'a$ is in $(P^{1+k}_1)^{-1}(L(G_1))$. Then, controllability of $\supC_{1+k}$ implies that $P_{1+k}(w)u'a$ is in $\supC_{1+k}$, that is, $xa$ is in $P^{1+k}_k(\supC_{1+k})$. Analogously, we can prove that $xa$ is in $P^{2+k}_k(\supC_{2+k})$. Thus, $xa$ is in $P_k(M)$. The rest of the proof is the same as in~\cite[Theorem~11]{automatica2011}.
  \qed\end{proof}

  In this Theorem, a relatively large number of properties that the coordinator, the local plants and the specification have to satisfy is assumed. However, a polynomial algorithm extending the coordinator event set so that the language $K$ becomes conditionally decomposable has already been discussed, see~\cite{SCL12}. In addition, to ensure that the projection $P^{i+k}_k$ is an $(P^{i+k}_i)^{-1}(L(G_i))$-observer and LCC for the language $(P^{i+k}_i)^{-1}(L(G_i))$, the coordinator event set can again be extended so that the conditions are fulfilled~\cite{SB11,LFobs}.
  
  Conditions of Theorem~\ref{thm2pc} imply that the projection $P_k$ is LCC for the language $L$.
  \begin{lemma}\label{lem46}
    Let $G_i$ over $\Sigma_i$ be generators, for $i=1,2$. Let $\Sigma=\Sigma_1\cup \Sigma_2$, and let $P_i: \Sigma^*\to \Sigma_i^*$, for $i=1,2,k$ and $\Sigma_k\subseteq \Sigma$, be projections. If $\Sigma_1\cap \Sigma_2$ is a subset of $\Sigma_k$ and the projection $P^{i+k}_k$ is LCC for the language $(P^{i+k}_i)^{-1}(L(G_i))$, for $i=1,2$, then the projection $P_k$ is LCC for the language $L=L(G_1\| G_2\| G_k)$.
  \end{lemma}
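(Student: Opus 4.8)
The plan is to unfold the synchronous product, split the requested uncontrollable continuation across the two components $\Sigma_{1+k}$ and $\Sigma_{2+k}$, apply the hypothesis on each component separately, and then recombine the two resulting uncontrollable words. First I would record the three facts about the event sets that make the recombination work: since $\Sigma_1\cap\Sigma_2\subseteq\Sigma_k$ we have $(\Sigma_1\cup\Sigma_k)\cap(\Sigma_2\cup\Sigma_k)=\Sigma_k$, the alphabets $\Sigma_1\setminus\Sigma_k$ and $\Sigma_2\setminus\Sigma_k$ are disjoint, and $\Sigma=\Sigma_1\cup\Sigma_2$. I would also note the two inclusions coming from $L=L(G_1\| G_2\| G_k)$, namely $P_{i+k}(L)\subseteq (P^{i+k}_i)^{-1}(L(G_i))$ and $P_k(L)\subseteq L(G_k)$, both of which are immediate from the characterization that $w\in L$ iff $P_1(w)\in L(G_1)$, $P_2(w)\in L(G_2)$, and $P_k(w)\in L(G_k)$.

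To verify the LCC condition, I would fix $s\in L$, an event $\sigma_u\in\Sigma_k\cap\Sigma_u$ with $P_k(s)\sigma_u\in P_k(L)$, and assume there is a word $u\in(\Sigma\setminus\Sigma_k)^*$ with $su\sigma_u\in L$ (otherwise the condition holds vacuously); the goal is to produce $u'\in(\Sigma_u\setminus\Sigma_k)^*$ with $su'\sigma_u\in L$. Projecting $su\sigma_u$ onto $\Sigma_{1+k}$ and using the first inclusion gives $P_{1+k}(s)\,u_1\,\sigma_u\in(P^{1+k}_1)^{-1}(L(G_1))$, where $u_1=P_{1+k}(u)\in(\Sigma_1\setminus\Sigma_k)^*$ and $P_{1+k}(\sigma_u)=\sigma_u$ because $\sigma_u\in\Sigma_k\subseteq\Sigma_{1+k}$; in particular $P^{1+k}_k(P_{1+k}(s))\sigma_u$ lies in $P^{1+k}_k((P^{1+k}_1)^{-1}(L(G_1)))$. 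Applying the hypothesis that $P^{1+k}_k$ is LCC for $(P^{1+k}_1)^{-1}(L(G_1))$ then yields a word $u_1'\in(\Sigma_{1,u}\setminus\Sigma_k)^*$ with $P_{1+k}(s)\,u_1'\,\sigma_u\in(P^{1+k}_1)^{-1}(L(G_1))$, that is, $P_1(s)\,u_1'\,P^{1+k}_1(\sigma_u)\in L(G_1)$. The symmetric argument on $\Sigma_{2+k}$ produces $u_2'\in(\Sigma_{2,u}\setminus\Sigma_k)^*$ with $P_2(s)\,u_2'\,P^{2+k}_2(\sigma_u)\in L(G_2)$.

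Finally I would set $u'=u_1'u_2'$, which lies in $(\Sigma_u\setminus\Sigma_k)^*$. Because $u_2'$ uses only events of $\Sigma_2\setminus\Sigma_k$, which are disjoint from $\Sigma_1$ (any event shared by the two plants lies in $\Sigma_k$), one has $P_1(u')=u_1'$ and, symmetrically, $P_2(u')=u_2'$, while $P_k(u')=\eps$. Substituting gives $P_1(su'\sigma_u)=P_1(s)\,u_1'\,P^{1+k}_1(\sigma_u)\in L(G_1)$ and $P_2(su'\sigma_u)=P_2(s)\,u_2'\,P^{2+k}_2(\sigma_u)\in L(G_2)$, while $P_k(su'\sigma_u)=P_k(s)\sigma_u\in P_k(L)\subseteq L(G_k)$. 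By the characterization of $L$ this gives $su'\sigma_u\in L$, establishing that $P_k$ is LCC for $L$.

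The step I expect to be most delicate is the recombination: one must check that concatenating $u_1'$ and $u_2'$ disturbs neither component's projection, which relies precisely on the disjointness of $\Sigma_1\setminus\Sigma_k$ and $\Sigma_2\setminus\Sigma_k$ guaranteed by $\Sigma_1\cap\Sigma_2\subseteq\Sigma_k$, and on $\sigma_u\in\Sigma_k$ so that its projections into the two components agree with $P_k(s)\sigma_u$. Some care is also needed because $\sigma_u$ may lie in $\Sigma_1$, in $\Sigma_2$, or in both; the argument is arranged so that $P^{i+k}_i(\sigma_u)$ (equal to $\sigma_u$ or $\eps$) always coincides with $P_i(\sigma_u)$, so that no explicit case distinction is required.
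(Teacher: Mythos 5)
Your proof is correct and follows essentially the same route as the paper's: project the witness $su\sigma_u$ onto each $\Sigma_{i+k}$, invoke the LCC hypothesis on $(P^{i+k}_i)^{-1}(L(G_i))$ to obtain uncontrollable replacements $u_1',u_2'$, and recombine them using the disjointness of $\Sigma_1\setminus\Sigma_k$ and $\Sigma_2\setminus\Sigma_k$. The only cosmetic difference is that the paper takes $u'$ to be an arbitrary element of the shuffle $\{u_1'\}\parallel\{u_2'\}$ while you take the particular concatenation $u_1'u_2'$, which is one such element.
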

  \begin{proof}
    For a word $s$ in $L$ and an event $\sigma_u$ in $\Sigma_{k,u}$, assume that there exists a word $u$ in $(\Sigma\setminus \Sigma_k)^*$ such that $su\sigma_u$ is in $L$. Then $P_{i+k}(su\sigma_u) = P_{i+k}(s)P_{i+k}(u)\sigma_u$ is in $(P^{i+k}_i)^{-1}(L(G_i))$ implies that there exists a word $v_{i}$ in $(\Sigma_{i+k,u}\setminus \Sigma_k)^*$, for $i=1,2$, such that $P_{i+k}(s)v_{i}\sigma_u$ is in $(P^{i+k}_i)^{-1}(L(G_i))$. As $P_k(v_{i})=\eps$, $P_i(v_{i})=v_i$ and we get that $P_i(s)P_i(v_i)P_i(\sigma_u)$ is in $L(G_i)$, for $i=1,2,k$. Consider a word $u'$ in $\{v_1\}\| \{v_2\}$. Then $P_i(u')=v_i$ and, thus, $su'\sigma_u$ is in $L$. Moreover, $u'$ is in $(\Sigma_u\setminus \Sigma_k)^*$.
  \qed\end{proof}

  It is an open problem how to verify that the projection $P_{i+k}$ is LCC for the language $L$ without computing the whole plant. In such a case and with the coordinator language included in the corresponding projection of the plant language, the solution computed using our coordination control architecture coincides with the global optimal solution given by the supremal controllable sublanguage of the specification.
  \begin{theorem}\label{thm4}
    Consider the setting of Theorem~\ref{thm2pc}. If, in addition, $L(G_k)$ is a subset of $P_k(L)$ and the projection $P_{i+k}$ is LCC for the language $L$, for $i=1,2$, then 
    \[
      \supC(K, L, \Sigma_{u}) = \supCC(K, L, (\Sigma_{1,u}, \Sigma_{2,u}, \Sigma_{k,u}))\,.
    \]
  \end{theorem}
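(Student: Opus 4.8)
The plan is to prove the equality by establishing the two inclusions $\supCC \subseteq \supC$ and $\supC \subseteq \supCC$ separately, where throughout I abbreviate $\supCC = \supCC(K, L, (\Sigma_{1,u}, \Sigma_{2,u}, \Sigma_{k,u}))$ and $\supC = \supC(K, L, \Sigma_u)$. Since both are supremal objects, the task reduces to showing that each lies in the class defining the other: that $\supCC$ is a controllable sublanguage of $K$, and that $\supC$ is a conditionally controllable sublanguage of $K$.

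For $\supCC \subseteq \supC$, I would invoke Theorem~\ref{thm2pc}, whose hypotheses are precisely those assumed here, to write $\supCC = \supC_{1+k} \parallel \supC_{2+k}$. Being a parallel composition of a language over $\Sigma_1 \cup \Sigma_k$ with a language over $\Sigma_2 \cup \Sigma_k$, this language is automatically conditionally decomposable with respect to $\Sigma_1, \Sigma_2, \Sigma_k$: one has $P_{i+k}(\supCC) \subseteq \supC_{i+k}$, hence $P_{1+k}(\supCC) \parallel P_{2+k}(\supCC) \subseteq \supC_{1+k} \parallel \supC_{2+k} = \supCC$, while the reverse inclusion is general. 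The language is conditionally controllable by Theorem~\ref{existence}, and, being prefix-closed, it satisfies the hypotheses of Proposition~\ref{prop3}, which then yields that $\supCC$ is controllable with respect to $L$ and $\Sigma_u$. As $\supCC \subseteq K$, supremality of $\supC$ gives $\supCC \subseteq \supC$.

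The substance of the theorem is the reverse inclusion, and the strategy is to prove that $\supC$ is itself conditionally controllable, so that supremality of $\supCC$ among the conditionally controllable sublanguages of $K$ yields $\supC \subseteq \supCC$. I would obtain conditional controllability by applying Proposition~\ref{prop4} with its ``$K$'' taken to be $\supC$: the language $\supC$ is prefix-closed and controllable with respect to $L$ and $\Sigma_u$ by definition, so it remains only to check that each of $P_k$, $P_{1+k}$, $P_{2+k}$ is an $L$-observer and LCC for $L$. The LCC requirements are in hand: $P_{1+k}$ and $P_{2+k}$ are LCC for $L$ by hypothesis, and $P_k$ is LCC for $L$ by Lemma~\ref{lem46}, whose assumptions are inherited from Theorem~\ref{thm2pc}. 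The extra hypothesis $L(G_k) \subseteq P_k(L)$ is what makes the verification of condition~\ref{cc1} of Definition~\ref{def:conditionalcontrollability} go through, since it guarantees that an uncontrollable continuation allowed by $L(G_k)$ is reflected in $P_k(L)$ and hence can be lifted back into $L$.

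The step I expect to be the main obstacle is the observer requirement. Proposition~\ref{prop4} asks that $P_k$, $P_{1+k}$, $P_{2+k}$ be observers for the \emph{global} plant language $L = L(G_1 \parallel G_2 \parallel G_k)$, whereas the setting of Theorem~\ref{thm2pc} provides only the local observer property of $P^{i+k}_k$ for $(P^{i+k}_i)^{-1}(L(G_i))$. Bridging this gap---deriving the global $L$-observer property from the local observer assumptions together with conditional decomposability and $L(G_k) \subseteq P_k(L)$---is the delicate point; it is an instance of the observer-composition phenomenon. I would either appeal to the observer-composition results already used in the paper, or avoid forming the global observer altogether by verifying the three conditions of conditional controllability for $\supC$ directly, lifting uncontrollable continuations through the local languages $(P^{i+k}_i)^{-1}(L(G_i))$ exactly as in the proof of Theorem~\ref{thm2pc} and then transferring back to $L$ via $L(G_k) \subseteq P_k(L)$. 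Once conditional controllability of $\supC$ is established, combining the two inclusions completes the proof.
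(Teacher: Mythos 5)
Your proposal is correct and follows essentially the same route as the paper: in both, the substance is to show that $\supC(K,L,\Sigma_u)$ is itself conditionally controllable by lifting uncontrollable continuations back into $L$ via the observer and LCC properties and then applying controllability of $\supC$ with respect to $L$, so that supremality of $\supCC$ gives the hard inclusion. The step you flag as delicate---obtaining the global $L$-observer property of $P_k$ and $P_{i+k}$ from the local hypotheses---is exactly what the paper imports from \cite[Theorem~15]{automatica2011} rather than re-deriving, so your first option (appealing to the observer-composition results) is the intended resolution; your derivation of $\supCC\subseteq\supC$ via Theorem~\ref{thm2pc}, conditional decomposability of $\supC_{1+k}\parallel\supC_{2+k}$, and Proposition~\ref{prop3} is a harmless self-contained variant of the paper's citation of the same earlier theorem.
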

  \begin{proof}
    It was shown in~\cite[Theorem~15]{automatica2011} that the projection $P_k$ is an $L$-observer. Moreover, by Lemma~\ref{lem46}, the projection $P_k$ is LCC for the language $L$. Let $\supC$ denote $\supC(K,L,\Sigma_u)$. We prove that the language $P_k(\supC)$ is controllable with respect to $L(G_k)$. Consider a word $t$ in $P_k(\supC)$ and an event $a$ in $\Sigma_{k,u}$ such that the word $ta$ is in $L(G_k)$, which is a subset of $P_k(L)$. We proved in~\cite[Theorem~15]{automatica2011} that there exist words $s$ in $\supC$ and $u$ in $(\Sigma\setminus \Sigma_k)^*$ such that $sua$ is in $L$ and $P_k(sua)=ta$. By the LCC property of the projection $P_k$, there exists a word $u'$ in $(\Sigma_{u}\setminus \Sigma_k)^*$ such that $su'a$ is in $L$. By controllability of the language $\supC$ with respect to $L$, the word $su'a$ is in $\supC$, that is, $P_k(su'a)=ta$ is in $P_k(\supC)$. Thus, (1) of Definition~\ref{def:conditionalcontrollability} holds. By~\cite[Theorem~15]{automatica2011}, the projection $P_{i+k}$ is an $L$-observer, for $i=1,2$. To prove (2) of Definition~\ref{def:conditionalcontrollability}, consider a word $t$ in $P_{i+k}(\supC)$, for $1\le i\le 2$, and an event $a$ in $\Sigma_{i+k,u}$ such that the word $ta$ is in $L(G_i)\parallel P_k(\supC)$. We proved in~\cite[Theorem~15]{automatica2011} that there exist words $s$ in $\supC$ and $u$ in $(\Sigma\setminus \Sigma_k)^*$ such that $sua$ is in $L$ and $P_{i+k}(sua)=ta$. As the projection $P_{i+k}$ is LCC for the language $L$, there exists a word $u'$ in $(\Sigma_{u}\setminus \Sigma_{1+k})^*$ such that $su'a$ is in $L$. Then controllability of $\supC$ with respect to $L$ implies that $su'a$ is in $\supC$, that is, $P_{i+k}(su'a)=ta$ is in $P_{i+k}(\supC)$. The other inclusion is the same as in~\cite[Theorem~15]{automatica2011}.
  \qed\end{proof}
  
   Finally, a natural and simple extension to more than two local subsystems with one central coordinator is sketched. All concepts and results carry over to this general case of $n$ subsystems, where the coordinator event set $\Sigma_k$ should contain all shared events (events common to two or more subsystems). Conditional decomposability is then simply decomposability with respect to event sets $(\Sigma_{i})_{i=1}^{n}$ and $\Sigma_{k}$, cf. Section~\ref{sec:nphard}. It is a very good news for large systems that conditional decomposability can be checked in polynomial time with respect to the number of components as has been noticed in~\cite{SCL12}. Note that unlike the previous form of conditional controllability,  Definition~\ref{def:conditionalcontrollability} can be extended to the general case of $n$ subsystems in an obvious way. Namely, conditions (2) and (3) are replaced by $n$ conditions of the form $P_{i+k}(K)$ is controllable with respect to $L(G_i) \parallel \overline{P_k(K)}$ and $\Sigma_{i+k,u}$. 
   
   Note, however, that for many large-scale systems a single central coordinator might be of little (if any) help due to too many events to be included in the coordinator event sets so that the conditions presented in this paper are satisfied (in particular, conditional decomposability, LCC, and observer conditions). It is always possible to relax some of the assumptions with the price of losing optimality, but in future publications we will rather propose multi-level coordination architectures with several layers of coordinators together with  different optimality conditions corresponding to a given multi-level coordination architecture.

\section{Conclusion}\label{sec:conclusion}
  We have revised, simplified, and extended the coordination control scheme for dis\-crete-event systems. These results have been used, for the case of prefix-closed languages, in the implementation of the coordination control plug-in for libFAUDES. We have identified cases, where supremal conditionally-controll\-able sublanguages can be computed even in the case of non-prefix-closed specification languages, and proposed coordinators for nonblockingness in addition to coordinators for safety developed in our earlier publications. Note that a general procedure for the computation of supremal conditionally-controll\-able sublanguages in the case of non-prefix-closed specification languages is still missing.
  
  Another aspect that requires further investigation is the generalization of coordination control from the current case of one central coordinator to multilevel  coordination control with several coordinators on different levels. In fact, one central coordinator is typically not enough in the case of large number of local subsystems, because too many events must be communicated (added into the coordinator event set) between the coordinator and local subsystems. This general architecture will be computationally more efficient, because less events need to be communicated. In the multi-level  coordination control the subsystems will be organized into different groups and each group will have a coordinator meaning that only events from a given group will be communicated among subsystems of the same group via the coordinator. 

\appendix
\section{Auxiliary results}\label{appendix}
  In this section, we list auxiliary results required in the paper.
  \begin{lemma}[Proposition~4.6, \cite{FLT}]\label{feng}
    Let $L_i$ over $\Sigma_i$, for $i=1,2$, be prefix-closed languages, and let $K_i$ be a controllable sublanguage of $L_i$ with respect to $L_i$ and $\Sigma_{i,u}$. Let $\Sigma=\Sigma_1\cup \Sigma_2$. If $K_1$ and $K_2$ are synchronously nonconflicting, then $K_1\parallel K_2$ is controllable with respect to $L_1\parallel L_2$ and $\Sigma_u$.
  \end{lemma}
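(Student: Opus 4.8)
The plan is to reduce controllability of the synchronous product to the local controllability of the two factors by exploiting the synchronous nonconflicting hypothesis. Writing $P_i : \Sigma^* \to \Sigma_i^*$ for the projections, I would first invoke the assumption $\overline{K_1 \parallel K_2} = \overline{K_1} \parallel \overline{K_2} = P_1^{-1}(\overline{K_1}) \cap P_2^{-1}(\overline{K_2})$, so that the controllability inclusion to be established, namely $\overline{K_1\parallel K_2}\,\Sigma_u \cap (L_1\parallel L_2) \subseteq \overline{K_1\parallel K_2}$, becomes
\[
  (\overline{K_1}\parallel\overline{K_2})\,\Sigma_u \cap (L_1\parallel L_2) \subseteq \overline{K_1}\parallel\overline{K_2}\,.
\]
The nonconflicting property is exactly what lets me replace the prefix closure of the product by the product of the prefix closures on both sides of the inclusion; without it only the inclusion $\overline{K_1\parallel K_2}\subseteq\overline{K_1}\parallel\overline{K_2}$ is available in general, which is the wrong direction for the target.

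The core is then a word-by-word verification. I would take $s \in \overline{K_1}\parallel\overline{K_2}$ and $\sigma \in \Sigma_u$ with $s\sigma \in L_1\parallel L_2$, and show $P_1(s\sigma)\in\overline{K_1}$ and $P_2(s\sigma)\in\overline{K_2}$. Since $\Sigma_u \subseteq \Sigma = \Sigma_1\cup\Sigma_2$, the event $\sigma$ lies in $\Sigma_1$, in $\Sigma_2$, or in both, and I split into these cases. If $\sigma\in\Sigma_1$, then $\sigma\in\Sigma_{1,u}$; from $s\sigma\in L_1\parallel L_2$ I read off $P_1(s)\sigma = P_1(s\sigma)\in L_1$, and since $P_1(s)\in\overline{K_1}$, controllability of $K_1$ with respect to $L_1$ and $\Sigma_{1,u}$ yields $P_1(s)\sigma\in\overline{K_1}$. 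Symmetrically, if $\sigma\in\Sigma_2$ then $P_2(s)\sigma\in\overline{K_2}$. For the factor not containing $\sigma$ --- say $\sigma\notin\Sigma_2$ --- the projection deletes the event, so $P_2(s\sigma)=P_2(s)\in\overline{K_2}$ and nothing further need be checked there. Combining the cases gives $s\sigma\in P_1^{-1}(\overline{K_1})\cap P_2^{-1}(\overline{K_2}) = \overline{K_1}\parallel\overline{K_2}$, and a final appeal to the nonconflicting identity returns this to $\overline{K_1\parallel K_2}$.

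I do not expect a serious obstacle: the argument is essentially bookkeeping over the partition of $\sigma$ into shared versus private events. The one point requiring care is making sure the uncontrollability of $\sigma$ transfers to the correct local alphabet --- that $\sigma\in\Sigma_u$ together with $\sigma\in\Sigma_i$ forces $\sigma\in\Sigma_{i,u}=\Sigma_i\cap\Sigma_u$, so that the local controllability hypotheses genuinely apply --- and to remember that the nonconflicting assumption must be used twice, once to rewrite the left-hand side of the inclusion and once to rewrite the right-hand side.
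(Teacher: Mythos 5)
Your argument is correct. Note, however, that the paper does not prove this statement at all: it is listed in the appendix as an auxiliary result imported verbatim from Feng and Wonham (Proposition~4.6 of~\cite{FLT}), so there is no in-paper proof to compare against. Your word-by-word verification --- rewriting $\overline{K_1\parallel K_2}$ as $P_1^{-1}(\overline{K_1})\cap P_2^{-1}(\overline{K_2})$ via the nonconflicting hypothesis, splitting on whether the uncontrollable event lies in $\Sigma_1$, $\Sigma_2$, or both, and applying local controllability to each factor that sees the event --- is the standard proof of this result, and your care about $\sigma\in\Sigma_u\cap\Sigma_i$ implying $\sigma\in\Sigma_{i,u}$ is exactly where the paper's convention $\Sigma_{i,u}=\Sigma_i\cap\Sigma_u$ is needed.
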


  \begin{lemma}[\cite{automatica2011}]\label{lem_trans}
    Let $K$ be a subset of a language $L$, and $L$ be a subset of a language $M$ over $\Sigma$ such that $K$ is controllable with respect to $\overline{L}$ and $\Sigma_u$, and $L$ is controllable with respect to $\overline{M}$ and $\Sigma_u$. Then $K$ is controllable with respect to $\overline{M}$ and $\Sigma_u$.
  \end{lemma}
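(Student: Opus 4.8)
The plan is to unfold the definition of controllability and chain the two hypotheses by a direct word-chasing argument. Recall that a language $N$ is controllable with respect to a prefix-closed language $P$ and $\Sigma_u$ precisely when $\overline{N}\Sigma_u\cap P\subseteq\overline{N}$. Since $\overline{L}$ and $\overline{M}$ are prefix-closed by construction, the two hypotheses read $\overline{K}\Sigma_u\cap\overline{L}\subseteq\overline{K}$ and $\overline{L}\Sigma_u\cap\overline{M}\subseteq\overline{L}$, and the goal is to establish $\overline{K}\Sigma_u\cap\overline{M}\subseteq\overline{K}$.

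First I would take an arbitrary word $w$ in $\overline{K}\Sigma_u\cap\overline{M}$ and write it as $w=sa$ with $s\in\overline{K}$ and $a\in\Sigma_u$, noting that $sa\in\overline{M}$. The key preliminary observation is that $K\subseteq L$ implies $\overline{K}\subseteq\overline{L}$ by monotonicity of the prefix-closure, so $s\in\overline{L}$ as well. Consequently $sa\in\overline{L}\Sigma_u\cap\overline{M}$, and controllability of $L$ with respect to $\overline{M}$ yields $sa\in\overline{L}$. With this in hand I would close the argument by invoking the first hypothesis: since $s\in\overline{K}$, $a\in\Sigma_u$, and now $sa\in\overline{L}$, the word $sa$ lies in $\overline{K}\Sigma_u\cap\overline{L}$, so controllability of $K$ with respect to $\overline{L}$ gives $sa=w\in\overline{K}$. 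As $w$ was arbitrary, the desired inclusion follows.

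This is essentially a transitivity argument, so I do not expect any genuine obstacle. The only step requiring a little care is the bridging inclusion $\overline{K}\subseteq\overline{L}$: this is exactly where the assumption $K\subseteq L$ (rather than merely some controllability relation linking $K$ and $M$) is needed, since it guarantees that the intermediate prefix $s$ is eligible to be fed into the controllability of $L$. Without it, one could not conclude $sa\in\overline{L}$ and the chain would break.
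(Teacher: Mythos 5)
Your proof is correct: the chain $s\in\overline{K}\subseteq\overline{L}$, then $sa\in\overline{L}$ by the second hypothesis, then $sa\in\overline{K}$ by the first, is exactly the standard transitivity argument, and the paper itself gives no proof here (it cites the lemma from an earlier reference), so there is nothing to diverge from.
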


  \begin{lemma}[\cite{Won04}]\label{lemma:Wonham}
    Let $P_k : \Sigma^*\to \Sigma_k^*$ be a projection, and let $L_i$ be a language over $\Sigma_i$, where $\Sigma_i$ is a subset of $\Sigma$, for $i=1,2$, and $\Sigma_1\cap \Sigma_2$ is a subset of $\Sigma_k$. Then $P_k(L_1\| L_2)=P_k(L_1) \| P_k(L_2)$.
  \end{lemma}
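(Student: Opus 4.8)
The plan is to prove the two set inclusions $P_k(L_1 \parallel L_2) \subseteq P_k(L_1) \parallel P_k(L_2)$ and $P_k(L_1) \parallel P_k(L_2) \subseteq P_k(L_1 \parallel L_2)$ separately. Since every word of $L_1 \parallel L_2$ lies in $(\Sigma_1 \cup \Sigma_2)^*$ and each $P_k(L_i)$ lies in $(\Sigma_i \cap \Sigma_k)^*$, both sides are languages over $\Sigma_k \cap (\Sigma_1 \cup \Sigma_2)$; I may therefore assume without loss of generality that $\Sigma_k \subseteq \Sigma_1 \cup \Sigma_2$, the events of $\Sigma_k$ outside $\Sigma_1 \cup \Sigma_2$ being irrelevant to either expression. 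The only hypothesis I expect to use nontrivially is $\Sigma_1 \cap \Sigma_2 \subseteq \Sigma_k$, and it will be needed solely for the second inclusion.

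For the inclusion from left to right, take a word $w \in P_k(L_1 \parallel L_2)$, so $w = P_k(s)$ for some $s \in L_1 \parallel L_2 = P_1^{-1}(L_1) \cap P_2^{-1}(L_2)$, where $P_i : (\Sigma_1 \cup \Sigma_2)^* \to \Sigma_i^*$; thus $P_i(s) \in L_i$ for $i = 1,2$. Writing $Q_i : \Sigma_k^* \to (\Sigma_i \cap \Sigma_k)^*$ for the projection onto the shared alphabet, I would use the fact that natural projections compose, namely $Q_i(P_k(s)) = P_k(P_i(s))$ for every $s$, to conclude $Q_i(w) = P_k(P_i(s)) \in P_k(L_i)$. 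Since this holds for both $i$, the word $w$ satisfies the defining condition of $P_k(L_1) \parallel P_k(L_2)$, giving the first inclusion. Note this argument nowhere uses $\Sigma_1 \cap \Sigma_2 \subseteq \Sigma_k$.

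The reverse inclusion is the substantive part. Given $w \in P_k(L_1) \parallel P_k(L_2)$, its projections $Q_i(w)$ lie in $P_k(L_i)$, so I can pick words $s_i \in L_i$ with $P_k(s_i) = Q_i(w)$ for $i = 1,2$. The goal is to interleave $s_1$ and $s_2$ into a single word $s \in (\Sigma_1 \cup \Sigma_2)^*$ with $P_1(s) = s_1$, $P_2(s) = s_2$, and $P_k(s) = w$; such an $s$ then lies in $L_1 \parallel L_2$ and projects to $w$. I would carry this out by induction on $|w|$, reading $w$ left to right as a synchronization skeleton: at each step, all private events of $s_1$ (those in $\Sigma_1 \setminus \Sigma_2$) and of $s_2$ (those in $\Sigma_2 \setminus \Sigma_1$) preceding the next skeleton symbol are emitted first in either order, after which the common symbol is emitted once and consumed simultaneously from $s_1$, $s_2$, and $w$. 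The main obstacle, and the only place the hypothesis enters, is verifying that this scheduling is consistent: every shared event (in $\Sigma_1 \cap \Sigma_2$) is forced into $\Sigma_k$ by $\Sigma_1 \cap \Sigma_2 \subseteq \Sigma_k$, hence appears in $w$ and, because $P_k(s_i) = Q_i(w)$, occurs in $s_1$ and $s_2$ in exactly the order dictated by $w$; consequently the shared events of $s_1$ and $s_2$ never need to be reordered to align, and the private events can be slotted freely between them. Tracking this bookkeeping through the induction, and checking that the residual tails of $s_1$, $s_2$, and $w$ still satisfy the hypotheses at each step, is the routine but delicate core of the argument.
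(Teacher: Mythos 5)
The paper does not prove this lemma at all---it is quoted from \cite{Won04} as an auxiliary result---so there is no in-paper argument to compare against; what you give is the standard direct proof, and its overall architecture (easy inclusion via commutation of nested projections, hard inclusion via an explicit interleaving driven by $w$) is the right one. The easy direction and the identification of where the hypothesis $\Sigma_1\cap\Sigma_2\subseteq\Sigma_k$ is needed are both correct.

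There is, however, one concrete flaw in the induction step of the reverse inclusion. You treat every symbol of the skeleton $w$ as a ``common symbol \ldots consumed simultaneously from $s_1$, $s_2$, and $w$.'' That is only valid for symbols in $\Sigma_1\cap\Sigma_2$. In general $\Sigma_k$ properly contains $\Sigma_1\cap\Sigma_2$ (in this paper that is the typical situation: the coordinator alphabet is deliberately enlarged beyond the shared events), so $w$ may contain a symbol $a\in\Sigma_k\cap(\Sigma_1\setminus\Sigma_2)$. Such an $a$ occurs in $s_1$ (at the position dictated by $Q_1(w)$) but need not occur in $s_2$ at all, so it cannot be ``consumed simultaneously'' from both; likewise ``all private events of $s_2$ preceding the next skeleton symbol'' is ill-defined when that skeleton symbol is absent from $\Sigma_2$. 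The fix is routine but must be stated: process $w$ symbol by symbol with a three-way case split---if the next symbol lies in $\Sigma_1\cap\Sigma_2$, flush the pending non-$\Sigma_k$ events of both $s_1$ and $s_2$ and consume the symbol from both; if it lies in $\Sigma_1\setminus\Sigma_2$ (resp.\ $\Sigma_2\setminus\Sigma_1$), flush only the pending non-$\Sigma_k$ events of $s_1$ (resp.\ $s_2$) and consume the symbol from that word alone; finally append the residual non-$\Sigma_k$ tails. With that case analysis the verification that $P_1(s)=s_1$, $P_2(s)=s_2$, $P_k(s)=w$ goes through exactly as you intend, using $\Sigma_1\cap\Sigma_2\subseteq\Sigma_k$ to guarantee that every event dropped by $P_k$ is private to one component. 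As written, your step would only establish the lemma in the special case $\Sigma_k\cap(\Sigma_1\cup\Sigma_2)=\Sigma_1\cap\Sigma_2$, which is not enough for the uses made of the lemma in this paper.
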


  \begin{lemma}[\cite{automatica2011}]\label{lem11}
    Let $L_i$ be a language over $\Sigma_i$, for $i=1,2$, and let $P_i : (\Sigma_1\cup \Sigma_2)^* \to \Sigma_i^*$ be a projection. Let $A$ be a language over $\Sigma_1\cup \Sigma_2$ such that $P_1(A)$ is a subset of $L_1$ and $P_2(A)$ is a subset of $L_2$. Then $A$ is a subset of $L_1\parallel L_2$.
  \end{lemma}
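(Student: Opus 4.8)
The plan is to unfold the definition of the synchronous product and then verify the inclusion word by word. Recall that $L_1 \parallel L_2 = P_1^{-1}(L_1) \cap P_2^{-1}(L_2)$, so it suffices to prove that every word of $A$ lies in both $P_1^{-1}(L_1)$ and $P_2^{-1}(L_2)$.

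First I would fix an arbitrary word $w$ in $A$. For each $i \in \{1,2\}$, the hypothesis $P_i(A) \subseteq L_i$ immediately gives $P_i(w) \in L_i$. The key elementary observation is that any word lies in the inverse image of its own projection, that is, $w \in P_i^{-1}(P_i(w))$; since $P_i(w) \in L_i$ and $P_i^{-1}$ is monotone, this yields $w \in P_i^{-1}(L_i)$. Carrying this out for both $i=1$ and $i=2$ and intersecting gives $w \in P_1^{-1}(L_1) \cap P_2^{-1}(L_2) = L_1 \parallel L_2$, and since $w$ was arbitrary we conclude $A \subseteq L_1 \parallel L_2$.

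I do not expect any genuine obstacle, as the statement is essentially definitional. The only point requiring care is the direction of the reasoning: the assumption $P_i(A) \subseteq L_i$ concerns \emph{projected} words, and one must lift it back to a statement about $w$ itself through the tautology $w \in P_i^{-1}(P_i(w))$. No further properties of the projections, and no regularity or prefix-closedness of the languages $L_i$, are needed.
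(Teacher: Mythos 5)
Your argument is correct and is the standard definitional one: for each $w\in A$ you use $P_i(w)\in P_i(A)\subseteq L_i$ together with $w\in P_i^{-1}(P_i(w))$ to place $w$ in $P_1^{-1}(L_1)\cap P_2^{-1}(L_2)=L_1\parallel L_2$. The paper itself gives no proof (it cites the lemma from an earlier reference), but your reasoning is exactly the expected one and is complete.
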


  \begin{lemma}[\cite{pcl06}]\label{fengT41}
    Let $L_i$ be a language over $\Sigma_i$, for $i\in J$, and let $\cup_{k,\ell\in J}^{k\neq\ell} (\Sigma_k\cap \Sigma_\ell)\subseteq \Sigma_0$. If $P_{i,0}:\Sigma_i^* \to (\Sigma_i\cap \Sigma_0)^*$ is an $L_i$-observer, for $i\in J$, then $\overline{\|_{i\in J} L_i} = \|_{i\in J} \overline{L_i}$ if and only if $\overline{\|_{i\in J} P_{i,0}(L_i)} = \|_{i\in J} \overline{P_{i,0}(L_i)}$.
  \end{lemma}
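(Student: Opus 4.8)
The plan is to move words back and forth between the full level and the $\Sigma_0$-projected level, using the observer hypothesis only for the harder implication. Write $\Sigma=\cup_{i\in J}\Sigma_i$, let $P_i:\Sigma^*\to\Sigma_i^*$ and $P_0:\Sigma^*\to\Sigma_0^*$ be projections, and record three preliminaries first. (a) For any projection $P$ one has $P(\overline{L})=\overline{P(L)}$, so in particular $\overline{P_{i,0}(L_i)}=P_{i,0}(\overline{L_i})$. (b) Since every pairwise intersection $\Sigma_k\cap\Sigma_\ell\subseteq\Sigma_0$, iterating Lemma~\ref{lemma:Wonham} gives the commutation identities $P_0(\|_{i\in J}L_i)=\|_{i\in J}P_{i,0}(L_i)$ and, via (a), $P_0(\|_{i\in J}\overline{L_i})=\|_{i\in J}\overline{P_{i,0}(L_i)}$. (c) Every event of $\Sigma_i\setminus\Sigma_0$ is strictly local, because $a\in\Sigma_i\cap\Sigma_j$ with $j\neq i$ forces $a\in\Sigma_0$; this is what makes the gluing below possible. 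Finally, the inclusions $\overline{\|_iL_i}\subseteq\|_i\overline{L_i}$ and $\overline{\|_iP_{i,0}(L_i)}\subseteq\|_i\overline{P_{i,0}(L_i)}$ hold for free, so in both directions only the reverse inclusion is at issue.

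For the direction ``$\|_iL_i$ nonconflicting $\Rightarrow$ the projections are nonconflicting'' I would take $t\in\|_i\overline{P_{i,0}(L_i)}$. By (a) each local projection $P_{\Sigma_i\cap\Sigma_0}(t)$ lies in $P_{i,0}(\overline{L_i})$, so I can choose $r_i\in\overline{L_i}$ with $P_{i,0}(r_i)=P_{\Sigma_i\cap\Sigma_0}(t)$; since the $r_i$ agree on all shared events, which live in $\Sigma_0$ and are prescribed by the single word $t$, I glue them into a word $s$ with $P_i(s)=r_i$ and $P_0(s)=t$. Then $s\in\|_i\overline{L_i}=\overline{\|_iL_i}$, so some extension $sv$ lies in $\|_iL_i$; projecting and using the commutation identity, $P_0(sv)\in\|_iP_{i,0}(L_i)$ has $t$ as a prefix, whence $t\in\overline{\|_iP_{i,0}(L_i)}$. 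Note this direction uses only the commutation lemma and the gluing, \emph{not} the observer property.

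For the converse I would take $s\in\|_i\overline{L_i}$ and set $t=P_0(s)\in\|_i\overline{P_{i,0}(L_i)}=\overline{\|_iP_{i,0}(L_i)}$ by hypothesis; pick a word $w$ over the relevant part of $\Sigma_0$ with $tw\in\|_iP_{i,0}(L_i)$. Here the observer property enters: for each $i$ the word $P_i(s)\in\overline{L_i}$ satisfies $P_{i,0}(P_i(s))=P_{\Sigma_i\cap\Sigma_0}(t)$, a prefix of the marked projected word $t_i:=P_{\Sigma_i\cap\Sigma_0}(tw)\in P_{i,0}(L_i)$, so Definition~\ref{def:observer} yields $u_i\in\Sigma_i^*$ with $P_i(s)u_i\in L_i$ and $P_{i,0}(P_i(s)u_i)=t_i$. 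The latter forces $P_{\Sigma_i\cap\Sigma_0}(u_i)=P_{\Sigma_i\cap\Sigma_0}(w)$, so the lifted extensions $u_i$ agree on shared events; gluing them, again using (c), produces $v$ with $P_i(v)=u_i$, hence $P_i(sv)=P_i(s)u_i\in L_i$ for all $i$, i.e.\ $sv\in\|_iL_i$ and $s\in\overline{\|_iL_i}$.

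The main obstacle is the recurring gluing step: reconstructing a single global word from a family of local words that agree on their shared events. For two components this is immediate, but for $|J|\ge 3$ one must check that pairwise agreement on the alphabets $\Sigma_i\cap\Sigma_j$ suffices to interleave all components consistently. The point that makes it go through is that all sharing is funneled through $\Sigma_0$, so a single $\Sigma_0$-skeleton ($t$, respectively $w$) already orders the shared events totally; the remaining strictly local events can then be inserted into this skeleton independently for each component. I would isolate this as a separate combinatorial lemma, namely nonemptiness of $\|_i\{u_i\}$ with prescribed and compatible $\Sigma_0$-images, prove it by induction on the length of the skeleton, and invoke it in both directions.
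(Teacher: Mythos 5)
The paper does not prove this lemma at all: it is imported by citation from~\cite{pcl06} and stated in the appendix as an auxiliary result, so there is no in-paper proof to compare against. Judged on its own, your reconstruction is correct and follows the standard argument behind Feng's result: the easy direction uses only the commutation of projection with synchronous product (Lemma~\ref{lemma:Wonham} iterated, which is legitimate here because every pairwise intersection lies in $\Sigma_0$) plus a lift of the skeleton $t$ to a global word $s$, while the hard direction is exactly where the $L_i$-observer property is needed to lift the projected extension $w$ to local extensions $u_i$ with $P_{i,0}(u_i)=P_{\Sigma_i\cap\Sigma_0}(w)$. You also correctly identify the one genuinely delicate point, namely that for $|J|\ge 3$ pairwise agreement on shared alphabets does \emph{not} in general guarantee that $\|_i\{u_i\}$ is nonempty (the usual cyclic-ordering counterexample), and you correctly explain why it does here: all shared events are funneled through $\Sigma_0$, so the single word $w$ (respectively $t$) imposes one global total order on every shared occurrence, and the remaining events are strictly local and can be interleaved freely. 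Isolating that as a combinatorial lemma proved by induction on the skeleton is the right way to make the argument airtight; with that lemma in place the proof is complete.
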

  
  \begin{lemma}[\cite{scl2011}]\label{CDlemma}
    A language $K\subseteq (\Sigma_1\cup \Sigma_2\cup\ldots\cup \Sigma_n)^*$ is conditionally decomposable with respect to event sets $\Sigma_1$, $\Sigma_2$,\ldots, $\Sigma_n$, $\Sigma_k$ if and only if there exist languages $M_{i+k}\subseteq \Sigma_{i+k}^*$, $i=1,2,\ldots,n$, such that $K=\parallel_{i=1}^{n} M_{i+k}$.
  \end{lemma}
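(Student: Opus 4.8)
The plan is to prove the two implications separately, the forward one being immediate from the definition and the converse resting on a short projection bound. Throughout I write $\Sigma=\bigcup_{i=1}^n\Sigma_i$ and use that $\bigparallel_{i=1}^{n}M_{i+k}=\bigcap_{i=1}^{n}P_{i+k}^{-1}(M_{i+k})$, where $P_{i+k}:\Sigma^*\to\Sigma_{i+k}^*$ with $\Sigma_{i+k}=\Sigma_i\cup\Sigma_k$. For the forward implication, if $K$ is conditionally decomposable then by definition $K=\bigparallel_{i=1}^{n}P_{i+k}(K)$, so it suffices to take $M_{i+k}:=P_{i+k}(K)\subseteq\Sigma_{i+k}^*$ and the required decomposition holds verbatim.

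For the converse, assume $K=\bigparallel_{i=1}^{n}M_{i+k}$ for some $M_{i+k}\subseteq\Sigma_{i+k}^*$; I must establish $K=\bigparallel_{i=1}^{n}P_{i+k}(K)$. First, the inclusion $K\subseteq\bigparallel_{i=1}^{n}P_{i+k}(K)$ always holds, exactly as recorded earlier in the paper: $K\subseteq P_{i+k}^{-1}(P_{i+k}(K))$ for every $i$, and intersecting over $i$ gives the claim. For the reverse inclusion I would first show $P_{j+k}(K)\subseteq M_{j+k}$ for each $j$. Indeed, from $K=\bigcap_{i}P_{i+k}^{-1}(M_{i+k})$ one reads off $K\subseteq P_{j+k}^{-1}(M_{j+k})$, and applying $P_{j+k}$ together with the identity $P_{j+k}\bigl(P_{j+k}^{-1}(M_{j+k})\bigr)=M_{j+k}$ yields $P_{j+k}(K)\subseteq M_{j+k}$. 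By monotonicity of the synchronous product it then follows that $\bigparallel_{i=1}^{n}P_{i+k}(K)\subseteq\bigparallel_{i=1}^{n}M_{i+k}=K$. Combining the two inclusions gives $K=\bigparallel_{i=1}^{n}P_{i+k}(K)$, which is precisely conditional decomposability.

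I do not expect a genuine obstacle: the argument is purely set-theoretic, and the monotonicity of $\bigparallel$ used at the end is itself immediate from its representation as an intersection of inverse projections. The one step deserving care is the upper bound $P_{j+k}(K)\subseteq M_{j+k}$, which hinges on the fact that each $M_{j+k}$ is a language over the very alphabet $\Sigma_{j+k}$ onto which $P_{j+k}$ projects; this is exactly what makes $P_{j+k}\bigl(P_{j+k}^{-1}(M_{j+k})\bigr)=M_{j+k}$ hold, rather than the weaker containment one would obtain for a language living over a larger alphabet. Notably, the hypothesis that $\Sigma_k$ contains all shared events is not needed anywhere in this particular argument.
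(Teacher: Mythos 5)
Your proof is correct: the forward direction is the trivial choice $M_{i+k}=P_{i+k}(K)$, and the converse correctly combines the universal inclusion $K\subseteq\bigparallel_{i}P_{i+k}(K)$ with the bound $P_{j+k}(K)\subseteq M_{j+k}$ obtained from $P_{j+k}\bigl(P_{j+k}^{-1}(M_{j+k})\bigr)=M_{j+k}$ and monotonicity of the synchronous product. The paper itself states this lemma as an imported result from the cited reference and gives no proof, but your argument is the standard set-theoretic one used there, and your closing observation that the assumption $\Sigma_s\subseteq\Sigma_k$ plays no role in this equivalence is accurate.
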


\subsubsection*{Acknowledgements.}
  Research of two first authors was supported by RVO:~67985840. In addition, the first author was supported by the Grant Agency of the Czech Republic under grant P103/11/0517 and the second author by the Grant Agency of the Czech Republic under grant P202/11/P028.
  
\bibliographystyle{spmpsci}
\bibliography{wodes12ex}

\end{document}